\documentclass[11pt]{amsart}

\usepackage[
  margin=1in
]{geometry}

\usepackage{amscd}
\usepackage{amssymb}
\usepackage{amsmath}
\usepackage{amsfonts}
\usepackage{enumerate}
\usepackage[all,cmtip]{xy}
\usepackage{tikz-cd}
\usepackage{hyperref}
\usepackage{xcolor}
\usepackage{adjustbox}
\usepackage{graphicx}
\usepackage[T1]{fontenc}
\usepackage{lmodern}

\newtheorem{thm}{Theorem}[section]
\newtheorem{prop}[thm]{Proposition}
\newtheorem{lem}[thm]{Lemma}
\newtheorem{cor}[thm]{Corollary}

\theoremstyle{definition}
\newtheorem{Def}[thm]{Definition}
\newtheorem{exmp}[thm]{Example}

\newtheorem{rem}[thm]{Remark}

\newcommand{\Ss}{\mathbb{S}}
\newcommand{\Zz}{\mathbb{Z}}
\newcommand{\Ee}{\mathbb{E}}
\newcommand{\Ff}{\mathbb{F}_p}

\newcommand{\BP}[1]{\mathrm{BP}\langle #1 \rangle}
\newcommand{\THH}{\mathrm{THH}}
\newcommand{\VV}[1]{\langle v_1,\cdots,v_{#1}\rangle}
\newcommand{\Sss}[1]{\mathbb{S}[x_1,\cdots,x_{#1}]}
\newcommand{\EE}[1]{E^2_{*,*}(#1)}

\makeatletter
\@namedef{subjclassname@2020}{2020 Mathematics Subject Classification}
\makeatother

\subjclass[2020]{55P43}
\keywords{topological Hochschild homology, Brown-Peterson spectrum, logarithmic ring spectrum}

\numberwithin{equation}{section}

\title[Logarithmic Topological Hochschild homology of $\BP{n}$]{Logarithmic topological Hochschild homology of the truncated Brown--Peterson spectrum}

\author[J. Zha]{Jiaxi Zha}
\address{Department of Mathematics, Nankai University, No.94 Weijin Road, Tianjin 300071, P. R. China}
\email{1093913699@qq.com}

\begin{document}

\begin{abstract}
We study logarithmic topological Hochschild homology (log THH) of the truncated Brown--Peterson spectra $\BP{n}$. We first construct compatible prelog structures on $\BP{n}$ generated by the classes $v_i$. Then we compute the $V(n)$-homotopy groups of log THH of $\BP{n}$ for the prelog structures generated by $v_n$ and $(v_1,\cdots,v_n)$, under the assumption that the Smith--Toda complex $V(n)$ exists as a ring spectrum. At height $2$ and for $p\geq7$, we also compute the $V(2)$-homotopy groups of log THH of $\BP{2}/v_1$ relative to $v_2$.
\end{abstract}

\maketitle

\section{Introduction}
Algebraic $K$-theory is a basic invariant of rings and ring spectra, but it is often hard to compute directly. The trace method relates algebraic $K$-theory to topological Hochschild homology (THH) and topological cyclic homology, which are often more accessible. However, the trace method has some limitations for localization questions in algebraic $K$-theory. For example, THH and topological cyclic homology do not generally satisfy the same d\'evissage statements as algebraic $K$-theory, and localizations often lead to non-bounded-below spectra where the usual computational techniques based on the work of Nikolaus--Scholze \cite{NS18} are harder to apply.

To address these problems, Rognes \cite{Rog09} introduced prelog and logarithmic structures on ring spectra and constructed logarithmic topological Hochschild homology for such structured ring spectra, using ideas from logarithmic geometry. Rognes, Sagave and Schlichtkrull further developed log THH. In \cite{RSS15}, they established localization sequences for log THH, and in \cite{RSS18}, they computed the log THH of the Adams summand and connective complex $K$-theory. A closely related construction is the relative term $\THH(A|x)$, which goes back to the work of Hesselholt--Madsen \cite{HM03}. Recently, Lundemo \cite{Lun25} proved that, under suitable hypotheses, this construction can be recovered from log THH. This illustrates the flexibility of the logarithmic framework. Another interesting application of log THH comes from its relation to adjunctions of roots, where logarithmic methods give useful information about THH and algebraic $K$-theory; see [4, 5]. Ausoni, Bayindir and Moulinos also developed methods which extend constructions previously available only for $\Ee_\infty$-algebras to the setting of $\Ee_1$-$\Ss[\sigma_k]$-algebras; see \cite[Definition 6.6]{ABM22}.

Our computation is based on the recent framework of Rognes--Sagave--Schlichtkrull \cite{RSS25}, which further develops this point of view. Their work extends log THH from $\Ee_\infty$-algebras to $\Ee_k$-algebras for $k\geq 2$, and constructs the cyclotomic structure needed for logarithmic topological cyclic homology. It also allows us to consider prelog structures generated by several homotopy classes, rather than only by a single homotopy class. This framework is especially well suited for truncated Brown--Peterson spectra, since $\BP{n}$ has homotopy groups concentrated in even degrees. In this paper, we use this framework to compute the $V(n)$-homotopy groups of log THH of $\BP{n}$ relative to the class $v_n$. Moreover, by an induction, we also compute $V(n)$-homotopy groups of log THH of $\BP{n}$ relative to the classes $(v_1,\cdots,v_n)$. We then specialize to height $2$ and determine the remaining terms in a two-generator repletion-residue diagram (see the diagram below). The prelog structures used below depend on the compatible choices made in  Section \ref{sec3}.

However, log THH does not in general carry an algebra structure; see \cite[\S 5.20]{RSS18}. This makes computations more difficult. For the prelog structure generated by $v_n$, we show that the resulting log THH admits a suitable algebra structure; see Proposition \ref{prop:n-1case}. We then compute the log THH of $\BP{n}$ with respect to this prelog structure. This result is analogous to the log THH computation for the Adams summand by Rognes--Sagave--Schlichtkrull \cite{RSS18}. The following is our first main result. 
\begin{thm}
	Suppose that the Smith--Toda complex $V(n)$ exists as a ring spectrum. Then there is an isomorphism of $\mathbb F_p$-algebras
\[V(n)_*\THH(\BP{n},\langle v_n\rangle)\cong E(\lambda_1,\cdots,\lambda_n,d\mathrm{log}x_n)\otimes P(\kappa_n)\]
with $|d\mathrm{log}x_n|=1$, $|\kappa_n|=2p^n$, and $|\lambda_i|=2p^i-1$ for $1\leq i\leq n$.
\end{thm}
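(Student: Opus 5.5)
Following Rognes--Sagave--Schlichtkrull's Adams summand computation \cite{RSS18}, I will work with the localization (cofiber) sequence for log THH from \cite{RSS15, RSS25}. For the prelog structure generated by $v_n$ it has the form
\[
\THH(\BP{n}) \longrightarrow \THH(\BP{n},\langle v_n\rangle) \longrightarrow \Sigma\THH(\BP{n-1}),
\]
with the residue term identified with $\THH$ of $\BP{n}/v_n\simeq\BP{n-1}$. This uses the compatible prelog structures from Section \ref{sec3}, which make $\BP{n}/v_n$ the appropriate quotient algebra.

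I apply $V(n)_*$ to this sequence and use the algebra structure from Proposition \ref{prop:n-1case} to make all maps module maps.

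\textbf{Input terms.} The left term is the known computation (Angeltveit--Rognes / McClure--Staffeldt style, via the Bökstedt spectral sequence and $V(n)\wedge\BP{n}\simeq H\mathbb F_p$ on homotopy):
\[
V(n)_*\THH(\BP{n})\cong E(\lambda_1,\dots,\lambda_{n+1})\otimes P(\mu_{n+1}), \qquad |\mu_{n+1}|=2p^{n+1}.
\]
For the right term, $V(n)\wedge\BP{n-1}$ is not a field-like object, because $v_n$ acts trivially on $\BP{n-1}$ while $V(n)$ kills it. So I compute $V(n)_*\THH(\BP{n-1})$ as
\[
V(n)_*\THH(\BP{n-1})\cong V(n-1)_*\THH(\BP{n-1})\otimes E(\epsilon_n), \qquad |\epsilon_n|=2p^n-1,
\]
where $V(n-1)_*\THH(\BP{n-1})=E(\lambda_1,\dots,\lambda_n)\otimes P(\mu_n)$.

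\textbf{Boundary map.} Next I determine the connecting map
\[
\partial\colon \Sigma V(n)_*\THH(\BP{n-1})\to \Sigma V(n)_*\THH(\BP{n}).
\]
The analogue of the Adams summand case is that the classes $\lambda_{n+1}$ and $\mu_{n+1}$ are detected through the residue. Concretely, I expect:
\begin{itemize}
\item $\partial(\Sigma\epsilon_n\,\mu_n^k)$ hits $\lambda_{n+1}\mu_{n+1}^{k}$-type classes (up to units);
\item $\mu_{n+1}$ maps to $\mu_n^p$-type classes.
\end{itemize}
I will compute $\partial$ using its multiplicative nature (it is a derivation-like map with $d\log x_n$ corresponding to $\Sigma 1$) together with degree counting. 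Since $|\mu_n^p|=|\mu_{n+1}|$ and $|\Sigma\epsilon_n|=2p^n=|\kappa_n|$, the kernel and cokernel should assemble to $E(\lambda_1,\dots,\lambda_n,d\log x_n)\otimes P(\kappa_n)$. Here $d\log x_n$ lifts $\Sigma 1$, and $\kappa_n$ lifts $\mu_n$, satisfying $\kappa_n^p\equiv\mu_{n+1}$ up to unit.

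\textbf{Multiplicative extensions.} Finally I resolve multiplicative extensions using the algebra structure of Proposition \ref{prop:n-1case}. I need:
\begin{itemize}
\item $(d\log x_n)^2=0$, which holds by graded commutativity in odd degree for $p$ odd;
\item $\kappa_n$ is not nilpotent, which follows from $\kappa_n^p$ mapping to the nonzero class $\mu_{n+1}$.
\end{itemize}

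\textbf{Main obstacle.} The main obstacle is the identification of $\partial$ and the precise structure of $V(n)_*\THH(\BP{n-1})$ as a module, since $v_n$-torsion phenomena could produce extensions. I would try first to compare with the Adams summand case ($n=1$) via naturality along $\BP{n}\to$ a suitable $K(n)$-type or $\BP{n}/(v_1,\dots,v_{n-1})$ quotient. In that quotient the computation reduces to the $n=1$ pattern, for which the analogous analysis was done in \cite{RSS18}.
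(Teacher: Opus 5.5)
\textbf{There is a genuine gap: the proposal never determines the connecting map.} Your route (the repletion--residue long exact sequence instead of the paper's Tor spectral sequences) could work in principle, but the connecting map carries the whole content of the theorem.

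Write $\tau_n\colon V(n)_*\THH(\BP{n-1})\to V(n)_*\THH(\BP{n})$ for your $\partial$ (the paper's transfer). Suppose you grant yourself $V(n)_*\THH(\BP{n})$-linearity and the restriction map $\lambda_{n+1}\mapsto 0$, $\mu_{n+1}\mapsto\mu_n^p$, which you would also have to import. Linearity then only gives $\mathrm{im}(\tau_n)\subseteq\lambda_{n+1}\cdot V(n)_*\THH(\BP{n})$. The module $V(n)_*\THH(\BP{n-1})$ is free over $D_n=E(\lambda_1,\dots,\lambda_n)\otimes P(\mu_{n+1})$ on $\mu_n^a,\ \epsilon_n\mu_n^a$ for $0\le a<p$. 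Combined with a degree count, this leaves exactly one unknown scalar:
\[\tau_n(\epsilon_n\mu_n^{p-1})=c\,\lambda_{n+1},\qquad c\in\Ff,\]
with $\tau_n$ vanishing on all other basis elements.

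\begin{itemize}
\item For $c=0$ everything is consistent, but the sequence then produces a strictly larger group in which $\rho_n(\lambda_{n+1})\neq0$.
\item Additively, the theorem is equivalent to $c\neq0$, that is, to $\lambda_{n+1}$ dying in log THH.
\item Degree counting cannot decide this, and neither can an unproved ``derivation-like'' behaviour of the residue.
\end{itemize}

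Several other steps are also off.
\begin{itemize}
\item Your guessed formulas do not match degrees: $|\epsilon_n\mu_n^k|=2(k+1)p^n-1\neq|\lambda_{n+1}\mu_{n+1}^k|$. The correct pattern is $\tau_n(\epsilon_n\mu_n^{pr-1})\doteq\lambda_{n+1}\mu_{n+1}^{r-1}$.
\item Under the residue, $\kappa_n\mapsto\epsilon_n$ modulo $\lambda_n$, while $d\log x_n\cdot\kappa_n^k\mapsto\mu_n^k$. So $\kappa_n$ does not lift $\mu_n$ in this sequence.
\item The algebra statement also needs the hidden extension $\rho_n(\mu_{n+1})\doteq\kappa_n^p$. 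Your non-nilpotence argument runs the wrong way, since nothing in the sequence maps log THH to $\THH(\BP{n})$.
\item The fallback through $\BP{n}/(v_1,\dots,v_{n-1})$ does not help. That quotient is only $\Ee_1$, so it lies outside the framework you are using. Its $V(n)_*\THH$ is also not of Adams-summand shape; compare Section~\ref{sec5}, where $\epsilon_1$ and $P_p([dx_1])$ appear.
\end{itemize}

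\textbf{How the paper supplies the missing input.} The paper runs Tor spectral sequences over $H_*(\THH(\Ss[x_n]);\Ff)$ for the whole cospan \eqref{eq:cospan}.
\begin{itemize}
\item The bottom term abuts to the known $V(n)_*\THH(\BP{n-1})$. A dimension count in total degree $2p^{n+1}-1$ forces $d^p(\gamma_p([dx_n]))\doteq\lambda_{n+1}$ there, and then $d^p(\gamma_k([dx_n]))\doteq\lambda_{n+1}\gamma_{k-p}([dx_n])$.
\item Both maps of $E^2$-pages in the cospan are injective, and $d\log x_n$ survives to $E^p$. Hence the same differentials occur in the log spectral sequence; this is precisely your $c\neq0$.
\item Afterwards everything lies in filtration at most $p-1$, so the spectral sequence collapses. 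The extension $[dx_n]^p\doteq\mu_{n+1}$ is imported from the bottom.
\end{itemize}
The long-exact-sequence formulas of Proposition~\ref{prop:seq1} are then deduced from the theorem rather than used to prove it. To make your route work, you would need an independent proof that $\lambda_{n+1}\in\mathrm{im}(\tau_n)$ and that $\kappa_n^p\neq0$.
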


Next, we compute the $V(n)$ homotopy groups of log THH of $\BP{n}$ with respect to the prelog structure generated by $(v_1,,\cdots,v_n)$. In this case, the corresponding log THH spectra do not have an evident algebra structure, so the computation becomes more difficult.  We therefore proceed by iterated logarithmic base change, repleting the generators one at a time. This allows us to reduce the multi-generator calculation to the already understood single-generator calculation. As a result, we obtain the following result.

\begin{thm}\label{thm:1.2}
Suppose that the Smith--Toda complex $V(n)$ exists as a ring spectrum. Then there is an isomorphism of $\Ff$-modules
    \[V(n)_*\THH(\BP{n},\VV{n}) \cong E(\lambda_1,d\mathrm{log} x_1,\cdots,d\mathrm{log} x_n)\otimes P(\mu_{n+1}) \otimes P_p([dx_1],\cdots,[dx_n]),\]
    where $|d\mathrm{log} x_i|=1,|\mu_{n+1}|=2p^{n+1},|[dx_i]|=2p^i$, and $|\lambda_1|=2p-1$ for $1\leq i\leq n$.
\end{thm}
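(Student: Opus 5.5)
The plan is to deplete the generators one at a time, from $v_n$ down to $v_1$, each time using a logarithmic base change plus a repletion--residue cofiber sequence.

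For $1\le k\le n$, write $\THH(\BP{n},\langle v_k,\dots,v_n\rangle)$ for log THH with respect to the compatible prelog structure of Section \ref{sec3} generated by $v_k,\dots,v_n$. The case $k=n$ is the first main theorem,
\[
V(n)_*\THH(\BP{n},\langle v_n\rangle)\cong E(\lambda_1,\dots,\lambda_n,d\mathrm{log}x_n)\otimes P(\kappa_n).
\]
The inductive step passes from $\langle v_{k+1},\dots,v_n\rangle$ to $\langle v_k,\dots,v_n\rangle$. Adding the generator $x_k$ (mapping to $v_k$) to the prelog structure gives, by the localization/residue sequence of \cite{RSS15} in the multi-generator form of \cite{RSS25}, a cofiber sequence
\[
\THH(\BP{n},\langle v_{k+1},\dots,v_n\rangle)\to \THH(\BP{n},\langle v_k,\dots,v_n\rangle)\to \Sigma\,\THH(\BP{n}/v_k,\langle \bar v_{k+1},\dots,\bar v_n\rangle).
\]
Here $\BP{n}/v_k$ carries the induced prelog structure, and the cofiber is identified via logarithmic base change along $\Ss[x_k]\to\Ss$.

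I would first compute the residue terms $V(n)_*\THH(\BP{n}/v_k,\dots)$ by the same inductive method. The underlying $V(n)_*\THH$ of these quotients should follow from the Bökstedt spectral sequence: $v_k$ is killed, $\lambda_k$ is replaced by a polynomial generator of degree $2p^k$, and $d\mathrm{log}x_k$ is replaced by a suspension.

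The key structural input is the behaviour of the generator $\lambda_k$ under depletion. In the log theory, $\lambda_k$ (of degree $2p^k-1$) is replaced by $d\mathrm{log}x_k$ (degree $1$) together with a truncated class $[dx_k]$ of degree $2p^k$. The truncation at height $p$ reflects the relation $x_k\,d\mathrm{log}x_k = dx_k$ and the $p$th power of $[dx_k]$ being hit, exactly as in the Adams summand computation of \cite{RSS18}. I would set up the long exact sequence in $V(n)_*$-homotopy and argue degreewise. The maps are module maps over $V(n)_*\THH(\BP{n})$, which is an algebra. Using this, together with the known answer at each stage, I would show that the connecting homomorphism annihilates or hits exactly the expected classes.

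Rather than tracking all connecting maps, an alternative is to compare Poincaré series. Both sides of the claimed isomorphism can be computed as $\Ff$-vector spaces degreewise, so it suffices to show the boundary maps are determined by the $\lambda_k$ and $\kappa$ classes and that the resulting extension is split as $\Ff$-modules. This is automatic over a field; only the dimension count matters, which is why the statement is only as $\Ff$-modules. At the final stage $k=1$, the class $\lambda_1$ survives because $v_1$ is the lowest generator and there is no $[dx_0]$ available to absorb it. Meanwhile the top polynomial class $\kappa_n$ gets renamed to $\mu_{n+1}$ in degree $2p^{n+1}$: at stage $n$ the $P(\kappa_n)$ pairs with $[dx_n]$ to form a polynomial class of degree $2p^{n+1}$, since $P(\kappa_n)\cong P_p([dx_n])\otimes P(\mu_{n+1})$ degreewise.

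The main obstacle is determining the connecting homomorphisms at each inductive stage, since log THH has no algebra structure for more than one generator. My first attempt would be to exploit the $V(n)_*\THH(\BP{n},\langle v_n\rangle)$-module structure, which is available because $v_n$-depletion gives an algebra (Proposition \ref{prop:n-1case}). I would also use degree reasons: the relevant classes lie in degrees where the target vanishes, up to the lowest class $[dx_k]$ or $\lambda_k$. I would try to show the boundary sends $\lambda_k$-type classes to $d\mathrm{log}x_k$ multiples, and is otherwise zero on the remaining generators.
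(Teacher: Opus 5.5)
Your outline has the right overall shape: replete one generator at a time, with each step introducing $d\mathrm{log}x_k$ and a truncated class $[dx_k]$ of degree $2p^k$. However, your account of what each step removes is wrong, and the step that carries all the content is left open.

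Repleting $v_k$ does not remove $\lambda_k$. It removes $\lambda_{k+1}$, via $d^p(\gamma_p([dx_k]))\doteq\lambda_{k+1}$, a differential from total degree $2p^{k+1}$ to $2p^{k+1}-1$. Your own base case shows this: $V(n)_*\THH(\BP{n},\langle v_n\rangle)$ still contains $\lambda_n$ and has lost $\lambda_{n+1}$. Under your rule, repleting $v_1,\dots,v_n$ would leave $\lambda_{n+1}$ rather than $\lambda_1$, and your remark that $\lambda_1$ survives ``because there is no $[dx_0]$'' contradicts that rule. With the correct rule, $\lambda_1$ survives simply because no $v_0$ is repleted. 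Your guess for the quotients has the same defect. For instance, $V(2)_*\THH(\BP{2}/v_1)\cong E(\lambda_1,\lambda_3,\epsilon_1)\otimes P(\mu_3)\otimes P_p([dx_1])$. Here $\lambda_2$, not $\lambda_1$, dies; the new degree-$2p$ class is truncated rather than polynomial; and there is an extra exterior class $\epsilon_1$ coming from $V(2)\otimes\BP{2}/v_1\simeq\Ff\oplus\Sigma^{2p-1}\Ff$.

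The more serious gap is that nothing in your proposal determines the connecting maps. Working through repletion--residue sequences forces you to know $V(n)_*\THH(\BP{n}/v_k,\langle v_{k+1},\dots,v_n\rangle)$ for every $k$. These are log THH of $\Ee_1$-quotients, which lie outside the framework of \cite{RSS25}. Computing them needs a further cube of such sequences, and the paper manages only the single case $n=2$, $k=1$, $p\geq 7$. Even with every term known, a dimension count needs the ranks of the maps, and degree reasons plus module structures do not fix them. That the transfer hits $\lambda_{k+1}$ (compare $(\tau_n)_*(\epsilon_n\mu_n^{p-1})\doteq\lambda_{n+1}$ in Proposition \ref{prop:seq1}) is exactly the nontrivial fact. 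In the paper it is deduced \emph{from} the already computed log THH, not the other way round. Also, the residue map sends $d\mathrm{log}x_k$-multiples into the residue term; it does not send ``$\lambda_k$-type classes'' to $d\mathrm{log}x_k$-multiples.

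The paper avoids residue terms entirely. It repletes in the order $v_1,\dots,v_n$ via $L_{n,i}=L_{n,i-1}\otimes_{A_i}B_i$, using the Tor spectral sequence with $E^2\cong\pi_*L_{n,i-1}\otimes E(d\mathrm{log}x_i)\otimes\Gamma([dx_i])$; this holds because $x_i$ and $dx_i$ act trivially. It then forces $d^p(\gamma_k([dx_i])c)\doteq\lambda_{i+1}\gamma_{k-p}([dx_i])c$ by comparison along $f_n\colon\BP{n}\to\BP{n-1}$, which uses:
\begin{itemize}
\item compatible prelog structures (Lemma \ref{lem:lift});
\item injectivity below total degree $2p^{n+1}-1$ (Lemma \ref{lem:inj});
\item induction on the height;
\item for the last step $i=n$, the module structure over the spectral sequence for $\langle v_n\rangle$.
\end{itemize}
Some independent input of this kind, pinning down the differentials or equivalently the connecting maps, is what your argument is missing.
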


Finally, we consider the height two case. By \cite[Example 10.8]{RSS25}, there is a commutative diagram 
\[
\xymatrix{
\THH(\BP{2}) \ar[r] \ar[d] & \THH(\BP{2},\langle v_1\rangle) \ar[r] \ar[d]
& \Sigma \THH(\BP{2}/v_1) \ar[d] &\\
\THH(\BP{2},\langle v_2\rangle) \ar[r] \ar[d] & \THH(\BP{2},\langle v_1,v_2\rangle) \ar[r] \ar[d] & \Sigma \THH(\BP{2}/v_1,\langle v_2\rangle) \ar[d] &\\
\Sigma \THH(\BP{1}) \ar[r] & \Sigma \THH(\BP{1},\langle v_1\rangle) \ar[r] & \Sigma^2 \THH(\Zz_{(p)})
}
\]
with all rows and columns being cofiber sequences. By the known computations of ordinary THH and the preceding two theorems, together with the computation of $V(2)_*\THH(\BP{2},\langle v_1\rangle)$ obtained as a by-product of the proof of Theorem \ref{thm:1.2}, the $V(2)$-homotopy groups of all spectra in the diagram are now determined except for the upper two terms in the right-hand column. We compute the upper-right term in Section \ref{sec5} and then use it to determine the middle-right term, which gives our final main theorem.

\begin{thm}
	Let $p\geq7$. There is an isomorphism of $\Ff$-modules
	\[V(2)_*\THH(\BP{2}/v_1,\langle v_2\rangle)\cong E(\lambda_1,\epsilon_1,d\mathrm{log}x_2)\otimes P(\mu_3)\otimes P_p([dx_1])\otimes P_p([dx_2])\]
	with $|\lambda_1|=|\epsilon_1|=2p-1$, $|d\mathrm{log}x_2|=1$, $|\mu_3|=2p^3$, $|[dx_1]|=2p$, and $|[dx_2]|=2p^2$.
\end{thm}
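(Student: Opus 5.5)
The plan is to read off $V(2)_*\THH(\BP{2}/v_1,\langle v_2\rangle)$ from the right-hand column of the $3\times3$ diagram of \cite[Example 10.8]{RSS25}. That column is the cofiber sequence
\[
\Sigma\THH(\BP{2}/v_1)\to\Sigma\THH(\BP{2}/v_1,\langle v_2\rangle)\to\Sigma^2\THH(\Zz_{(p)}),
\]
so after desuspending we get the repletion–residue sequence for $\BP{2}/v_1$ relative to $v_2$. Its residue is $\BP{2}/(v_1,v_2)\simeq \Zz_{(p)}$.

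The first step is the upper-right term, which is computed in Section \ref{sec5} and assumed here. It should be
\[
V(2)_*\THH(\BP{2}/v_1)\cong E(\lambda_1,\epsilon_1,\lambda_2)\otimes P(\mu_3)\otimes P_p([dx_1])\quad\text{or similar.}
\]
The second step is to recall the residue term $V(2)_*\THH(\Zz_{(p)})$. This follows from the Bökstedt computation $V(0)_*\THH(\Zz_{(p)})\cong E(\lambda_1)\otimes P(\mu_1)$ by smashing with $V(1)$ and then $V(2)$. This step introduces extra exterior classes in degrees $2p-1$ and $2p^2-1$. The Bockstein-type differentials for $v_1$ and $v_2$ act trivially on $\THH(\Zz_{(p)})$, since $v_1$ and $v_2$ map to zero in $\pi_*\Zz_{(p)}$.

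The third step is to identify the boundary map in the long exact sequence
\[
V(2)_{*}\THH(\BP{2}/v_1)\to V(2)_{*}\THH(\BP{2}/v_1,\langle v_2\rangle)\to V(2)_{*-1}\THH(\Zz_{(p)})\xrightarrow{\partial}V(2)_{*-1}\THH(\BP{2}/v_1).
\]
The model is the analysis in the proof of the first main theorem and in \cite{RSS18}. There, the transfer-type map $\Sigma\THH(\Zz_{(p)})\to\Sigma\THH(\BP{1})$, after smashing with $V(n)$, hits $\lambda_n$ and the powers of the top polynomial generator $\mu$. The class $d\mathrm{log}x_2$ lifts the unit of the residue, and the Rognes relation $\mu_3 = [dx_2]^{p}\cdot(\text{unit})$ truncates $[dx_2]$ at height $p$.

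I would use compatibility of the whole $3\times3$ diagram to pin down $\partial$. The middle row and bottom row are already known: they come from the previous theorems and from the $\langle v_1\rangle$ by-product. The commuting squares then force the image of $\partial$ in the right column to be the cofiber of the corresponding map in the middle column. This gives the kernel and cokernel as $\Ff$-vector spaces. I would finish by matching Poincaré series against the claimed answer
\[
E(\lambda_1,\epsilon_1,d\mathrm{log}x_2)\otimes P(\mu_3)\otimes P_p([dx_1])\otimes P_p([dx_2]).
\]

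The main obstacle is determining $\partial$ precisely, and in particular ruling out hidden extensions and stray nonzero components in degrees where several classes coincide. The hypothesis $p\geq7$ presumably enters here, through sparseness of degrees, which forces vanishing of the potential extra boundaries. I would first attempt a degree count to show $\partial$ is determined by its value on the generator $1$ and on the exterior classes, using $\THH(\BP{2}/v_1)$-module structure where available. If that fails, I would compare via the map of cofiber sequences induced by $\BP{2}\to\BP{2}/v_1$.

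Since only an $\Ff$-module isomorphism is claimed, no multiplicative extension problem needs to be solved. A dimension count in each degree, together with the exactness obtained above, suffices.
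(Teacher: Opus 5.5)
Reducing to the long exact sequence of the residue cofiber sequence is fine. The difficulty is what remains after that reduction. Since the middle term is the unknown, exactness makes the claimed answer equivalent to a precise rank, in every degree, for the connecting map $(\bar\tau_2)_*\colon V(2)_*\THH(\Zz_{(p)})\to V(2)_*\THH(\BP{2}/v_1)$. None of your proposed tools determines this rank.

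(i) There is no $\THH(\BP{2}/v_1)$-module structure to use. $\BP{2}/v_1$ is only an $\Ee_1$-ring, so $\THH(\BP{2}/v_1)$ is just a spectrum. The sequence is at best linear over $V(2)_*\THH(\BP{2})=E(\lambda_1,\lambda_2,\lambda_3)\otimes P(\mu_3)$, which does not see $\epsilon_1$ or $[dx_1]$. The paper explicitly remarks that these maps need not be $\overline{D}_2$-linear.

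(ii) Naturality along $\BP{2}\to\BP{2}/v_1$ together with Proposition~\ref{prop:seq1} gives $(\bar\tau_2)_*(\epsilon_2\mu_1^{p(p-1)})\doteq\lambda_3$. But it only controls $(\bar\tau_2)_*$ on the image of $V(2)_*\THH(\BP{1})$, which is $E(\lambda_1,\epsilon_2)\otimes P(\mu_1^p)$. Classes such as $\epsilon_2\mu_1^{p(p-1)+j}$ with $0<j<p$, or $\epsilon_1\epsilon_2\mu_1^{p(p-1)}$, are out of reach, yet they must map to $\lambda_3[dx_1]^j$ and $\epsilon_1\lambda_3$ respectively.

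(iii) The commuting squares of the $3\times3$ diagram tie $(\bar\tau_2)_*$ only to maps that are computed nowhere: the transfer in the middle column and the $\langle v_1\rangle$-residue in the top row. The paper gives only the groups in the middle column, so ``the commuting squares force the image of $\partial$'' has no content as stated.

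Matching Poincar\'e series against the claimed answer is circular, since that is the unknown. Your input is also misremembered. Proposition~\ref{cor:bp/v} gives $V(2)_*\THH(\BP{2}/v_1)\cong E(\lambda_1,\lambda_3,\epsilon_1)\otimes P(\mu_3)\otimes P_p([dx_1])$, because $d^p(\gamma_p([dx_1]))\doteq\lambda_2$ kills $\lambda_2$. What the transfer has to hit is $\lambda_3\cdot\overline{D}_2$, in degrees starting at $2p^3-1$.

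The paper never computes a map in this exact sequence to prove the theorem. It runs the Tor spectral sequence for $\THH(\BP{2}/v_1)\otimes_{\THH(\Ss[x_2])}\THH(\Ss[x_2^{\pm1}])_{\geq0}$, with $E^2\cong V(2)_*\THH(\BP{2}/v_1)\otimes E(d\mathrm{log}x_2)\otimes\Gamma([dx_2])$. It imports the differentials $d^p(\gamma_k([dx_2]))\doteq\lambda_3\gamma_{k-p}([dx_2])$ through the injective cospan maps, exactly as in Theorem~\ref{thm:main}.

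Those differentials are found in the residue spectral sequence converging to the known $V(2)_*\THH(\Zz_{(p)})$, as follows. That spectral sequence splits according to $V(2)\otimes\BP{2}/v_1\simeq\Ff\oplus\Sigma^{2p-1}\Ff$. The summand receiving $E^r(\BP{1})$ inherits the differentials of Corollary~\ref{cor:bpn-1}. Lemma~\ref{lem:mod3} makes everything $E^r(\BP{1})$-linear, and a dimension count rules out further differentials. Finally $E^{p+1}=E^\infty$ for filtration reasons.

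The maps in your exact sequence (Proposition~\ref{prop:seq2}) are derived only afterwards, from this answer. To keep your route you would need an independent computation of $(\bar\tau_2)_*$ on all of $V(2)_*\THH(\Zz_{(p)})$, and that is essentially what the spectral-sequence comparison supplies. Also, in the paper $p\geq7$ serves to make $V(2)$ a ring spectrum, not to provide degree sparseness.
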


\begin{rem}
	Note that $\BP{2}/v_1$ is only an $\Ee_1$-ring spectrum. Thus the corresponding log THH is not defined in the framework of \cite{RSS25}. We instead use the definition of log THH from \cite{ABM22}. With this definition, the right column is still a cofiber sequence.
\end{rem}

\subsection*{Organization}
In Section \ref{sec2}, we recall the definition of log THH and record the formulas needed for the subsequent computations. In Section \ref{sec3}, we describe three prelog structures on $\BP{n}$ and study the relations among them. In Section \ref{sec4}, we compute the $V(n)$-homotopy groups of log THH of $\BP{n}$ for the prelog structures generated by $v_n$ and $(v_1,\cdots,v_n)$. In Section \ref{sec5}, we carry out the explicit computations in the case $n=2$. Finally, in Section \ref{sec6}, we study maps in the repletion-residue cofiber sequences.

\subsection*{Conventions}
We work with $\infty$-categories throughout, following \cite{HTT,HA}. Therefore, all categories are understood to be $\infty$-categories unless otherwise specified. Moreover,
\begin{enumerate}
\item We write $\mathcal{S}$ for the category of spaces with its Cartesian symmetric monoidal structure, and $\mathrm{Sp}$ for the category of spectra with its smash product symmetric monoidal structure. The sphere spectrum is denoted by $\mathbb{S}$, and we abbreviate $\Ee_k$-ring spectra to $\Ee_k$-rings. For an ordinary commutative ring $R$, the associated Eilenberg--MacLane spectrum is again denoted by $R$.
\item For a symmetric monoidal category $\mathcal{C}$, we equip $\mathrm{Alg}_{\Ee_1}(\mathcal{C})$ with the symmetric monoidal structure
given by pointwise tensor product. Given another symmetric monoidal category $\mathcal{D}$, we regard $\mathrm{Fun}(\mathcal{C},\mathcal{D})$ as a symmetric monoidal category via the Day convolution.
\item For a prime $p$, we write $P(x_i)$, $E(x_i)$, and $\Gamma(x_i)$ for the polynomial, exterior, and divided power algebras on the classes $x_i$ over $\mathbb F_p$, respectively. In the spectral sequence computations, we write $a \doteq b$ to mean that $a=ub$ in the relevant term of the spectral sequence for some $u\in \mathbb F_p^\times$.
\end{enumerate}

\section{Logarithmic topological Hochschild homology}\label{sec2}
We begin by recalling the notions of prelog rings and logarithmic topological Hochschild homology, first introduced for $\Ee_\infty$-ring spectra in \cite{Rog09,RSS15,RSS18}. These notions were later extended in \cite{RSS25} to even ring spectra which need not be $\Ee_\infty$. Since the main examples in this paper, notably $\BP{n}$, fall into this latter setting, we use the formulation of \cite{RSS25}. By \cite[Corollary B.6]{RSS25}, for a prelog $\Ee_\infty$-ring whose prelog structure is generated by a single element, the associated logarithmic topological Hochschild homology agrees with that defined in \cite{Rog09,RSS15,RSS18}.

\begin{Def}\cite[Definition 4.1]{RSS25}
	A prelog $\Ee_k$-ring $(A,\alpha,\bar{\alpha})$ consists of an $\Ee_k$-ring $A$, an $\Ee_k$-map $\alpha\colon M\to \mathrm{Pic}(\Ss)$, and an $\Ee_k$-ring map $\bar{\alpha}:\mathrm{Th}(\alpha)\to A.$ 
\end{Def}
\begin{exmp}
	The Thom spectrum $\mathrm{Th}(\alpha)$ carries a canonical prelog $\Ee_k$-ring structure given by $(\mathrm{Th}(\alpha),\alpha,\mathrm{id}).$
\end{exmp}

Let $I$ be a discrete commutative monoid, and let $M\to I$ be an $\Ee_k$-map of spaces. Given an $\Ee_k$-map of spaces $\alpha\colon M\to \mathrm{Pic}(\Ss)$, we view it as an $\Ee_k$-map over $I$
\[\alpha\colon M\to \mathrm{Pic}(\Ss)\times I.\]
By \cite[\S3.3.2]{HA}, this corresponds to a map of $\Ee_k$-algebras
\[\alpha_*\colon M_*\to c^*\mathrm{Pic}(\Ss)\]
in $\mathrm{Fun}(I,\mathcal{S})$, where $c^*\mathrm{Pic}(\Ss)$ denotes the constant functor. Using \cite[Lemma 2.14]{RSS25}, we shall also regard $\alpha_*$ as an $\mathbb E_k$-algebra in $\mathrm{Fun}(I,\mathcal{S}_{/\mathrm{Pic}(\Ss)})$. Since $\mathrm{Pic}(\Ss)$ is grouplike, the $\Ee_k$-map $\alpha:M\to \mathrm{Pic}(\Ss)$ extends to an $\Ee_k$-map 
\[\alpha^{\mathrm{gp}}\colon M^{\mathrm{gp}}\to \mathrm{Pic}(\Ss),\]
by \cite[Corollary 3.5]{RSS25}. Applying the previous construction to $\alpha^{\mathrm{gp}}$ yields an $\Ee_k$-algebra
\[
\alpha_*^{\mathrm{gp}}
\in
\mathrm{Fun}(I^{\mathrm{gp}},
\mathcal S_{/\mathrm{Pic}(\Ss)}).
\]
This in turn gives an $I^{\mathrm{gp}}$-graded $\Ee_{k-1}$-ring $\THH(\mathrm{Th}(\alpha_*^{\mathrm{gp}})),$ obtained as the image of $\alpha_*^{\mathrm{gp}}$ under the composite functor
\[\THH: \mathrm{Alg}_{\Ee_k}(\mathrm{Fun}(I^{\mathrm{gp}},\mathcal{S}_{/\mathrm{Pic}(\Ss)}))\xrightarrow{\mathrm{Th}\circ-}\mathrm{Alg}_{\Ee_k}(\mathrm{Fun}(I^{\mathrm{gp}},\mathrm{Sp}))\xrightarrow{\THH}\mathrm{Alg}_{\Ee_{k-1}}(\mathrm{Fun}(I^{\mathrm{gp}},\mathrm{Sp})).\]
Moreover, we denote the natural map $I\to I^{\mathrm{gp}}$ by $\gamma_I$.

\begin{Def}\cite[Definition 5.16 and 5.21]{RSS25}
	Assume $k\geq 2$. The $I$-graded logarithmic topological Hochschild homology of the canonical prelog $\Ee_k$-ring $(\mathrm{Th}(\alpha),\alpha,\mathrm{id})$ is the pullback along $\gamma_I$
	\[\THH(\mathrm{Th}(\alpha_*),\alpha):=\gamma^*_I \THH(\mathrm{Th}(\alpha_*^{\mathrm{gp}}))= \THH(\mathrm{Th}(\alpha_*^{\mathrm{gp}}))\circ\gamma_I.\]
	By \cite[Equation 5.6]{RSS25}, there is a map of $I$-graded $\Ee_{k-1}$-rings
	\[\THH(\mathrm{Th}(\alpha))\to\THH(\mathrm{Th}(\alpha_*),\alpha).\]
	The logarithmic topological Hochschild homology of a prelog $\Ee_k$-ring $(A,\alpha,\bar{\alpha})$ is then defined to be the $\Ee_{k-2}$-ring
	\[\THH(A,\alpha,\bar{\alpha}):=\THH(A)\otimes_{\THH(\mathrm{Th}(\alpha))}\THH(\mathrm{Th}(\alpha_*),\alpha).\]
\end{Def}

Now, let $N\to J$ be another $\Ee_k$-map of spaces with $J$ a discrete commutative monoid, and let $\beta\colon N\to \mathrm{Pic}(\Ss)$ be an $\Ee_k$-map of spaces. The above construction yields an $\Ee_k$-algebra $\beta_*\in\mathrm{Fun}(J,\mathcal{S}_{/\mathrm{Pic}(\Ss)})$. Since $\mathcal{S}_{/\mathrm{Pic}(\Ss)}$ is symmetric monoidal, the external
product of $\alpha_*$ and $\beta_*$, followed by the monoidal product in
$\mathcal{S}_{/\mathrm{Pic}(\Ss)}$, defines an $\mathbb E_k$-algebra
\[\alpha_*\boxtimes \beta_* \in
\mathrm{Fun}(I\times J,\mathcal{S}_{/\mathrm{Pic}(\Ss)}).
\]
Explicitly, it is the composite
\[I\times J \xrightarrow{\alpha_*\times\beta_*} \mathcal{S}_{/\mathrm{Pic}(\Ss)}\times \mathcal{S}_{/\mathrm{Pic}(\Ss)} \xrightarrow{\times} \mathcal{S}_{/\mathrm{Pic}(\Ss)}.\]
Equivalently, $\alpha_*\boxtimes \beta_*$ can be obtained by applying the
above construction to the $\mathbb E_k$-map
\[\alpha\boxtimes\beta\colon M\times N \xrightarrow{\alpha\times \beta} \mathrm{Pic}(\Ss) \times \mathrm{Pic}(\Ss) \xrightarrow{\otimes} \mathrm{Pic}(\Ss). \]
Similarly, the symmetric monoidal structure on $\mathrm{Sp}$ induces an
external tensor product
\[
\mathrm{Fun}(I,\mathrm{Sp})\times \mathrm{Fun}(J,\mathrm{Sp})
\longrightarrow
\mathrm{Fun}(I\times J,\mathrm{Sp}),
\qquad
(X,Y)\longmapsto X\otimes Y,
\]
given pointwise by
\[
(X\otimes Y)_{(i,j)}\simeq X_i\otimes Y_j.
\]
This external
tensor product sends a pair consisting of an $I$-graded $\Ee_k$-algebra and a $J$-graded $\Ee_k$-algebra to an $I\times J$-graded $\Ee_k$-algebra.

\begin{lem}\label{lem:gd}
    With the above notation, there is an equivalence of $(I\times J)$-graded $\Ee_{k-1}$-algebras
	\[\THH(\mathrm{Th}(\alpha_*\boxtimes\beta_*),\alpha\boxtimes\beta)\simeq \THH(\mathrm{Th}(\alpha_*),\alpha)\otimes\THH(\mathrm{Th}(\beta_*),\beta).\]
\end{lem}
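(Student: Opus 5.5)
The plan is to check that every functor in the construction is symmetric monoidal with respect to external products, and that pullback along $\gamma_{I\times J}=\gamma_I\times\gamma_J$ commutes with external tensor products.

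\emph{Group completion.} First I identify $(I\times J)^{\mathrm{gp}}\simeq I^{\mathrm{gp}}\times J^{\mathrm{gp}}$ and $(M\times N)^{\mathrm{gp}}\simeq M^{\mathrm{gp}}\times N^{\mathrm{gp}}$. Group completion of $\Ee_k$-spaces is a left adjoint, and finite products of $\Ee_k$-spaces are biproducts, so it preserves them. Under these identifications the universal property of \cite[Corollary 3.5]{RSS25} gives $(\alpha\boxtimes\beta)^{\mathrm{gp}}\simeq \alpha^{\mathrm{gp}}\boxtimes\beta^{\mathrm{gp}}$: both are $\Ee_k$-maps $M^{\mathrm{gp}}\times N^{\mathrm{gp}}\to\mathrm{Pic}(\Ss)$ restricting to $\alpha\boxtimes\beta$. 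Passing through the straightening of \cite[\S3.3.2]{HA} and \cite[Lemma 2.14]{RSS25}, this yields
\[(\alpha_*\boxtimes\beta_*)^{\mathrm{gp}}\simeq \alpha_*^{\mathrm{gp}}\boxtimes\beta_*^{\mathrm{gp}}\]
as $\Ee_k$-algebras in $\mathrm{Fun}(I^{\mathrm{gp}}\times J^{\mathrm{gp}},\mathcal S_{/\mathrm{Pic}(\Ss)})$. I expect this step to be the main obstacle. The delicate point is matching the fibrewise description over $I$ with the external product, since straightening must send a product of spaces over $I\times J$ to the external product of the fibre functors. I would argue via the equivalence $\mathcal S_{/I\times J}\simeq \mathrm{Fun}(I\times J,\mathcal S)$ for discrete $I,J$. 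Under this equivalence, fibres of $M\times N\to I\times J$ over $(i,j)$ are $M_i\times N_j$, and this identification is compatible with the $\Ee_k$-structures because the equivalence is symmetric monoidal for the Cartesian/Day convolution structures.

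\emph{Thom spectra and $\THH$.} Next, $\mathrm{Th}\colon \mathcal S_{/\mathrm{Pic}(\Ss)}\to\mathrm{Sp}$ is symmetric monoidal, so applied pointwise it carries external products to external tensor products:
\[\mathrm{Th}(\alpha_*^{\mathrm{gp}}\boxtimes\beta_*^{\mathrm{gp}})\simeq \mathrm{Th}(\alpha_*^{\mathrm{gp}})\otimes \mathrm{Th}(\beta_*^{\mathrm{gp}})\]
as $I^{\mathrm{gp}}\times J^{\mathrm{gp}}$-graded $\Ee_k$-rings. Graded $\THH$ is computed by the cyclic bar construction, which is a colimit in Day convolution. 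Writing $X$ for an $I^{\mathrm{gp}}$-graded and $Y$ for a $J^{\mathrm{gp}}$-graded $\Ee_k$-ring, the external tensor product equals the Day convolution $\pi_1^!X\otimes\pi_2^!Y$ of left Kan extensions along the inclusions $I^{\mathrm{gp}}\to I^{\mathrm{gp}}\times J^{\mathrm{gp}}$ (and similarly for $J^{\mathrm{gp}}$). These left Kan extensions are symmetric monoidal left adjoints, so they commute with $\THH$. Since $\THH$ is symmetric monoidal on $\Ee_k$-algebras, I obtain
\[\THH(X\otimes Y)\simeq \THH(X)\otimes\THH(Y)\]
as $\Ee_{k-1}$-algebras.

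\emph{Pullback along $\gamma$.} Finally, restriction along $\gamma_I\times\gamma_J$ is computed pointwise, so
\[(\gamma_I\times\gamma_J)^*(X'\otimes Y')_{(i,j)}=X'_{\gamma(i)}\otimes Y'_{\gamma(j)}=(\gamma_I^*X'\otimes\gamma_J^*Y')_{(i,j)}.\]
This identification is natural and compatible with the lax monoidal structure of restriction, which makes it an equivalence of $\Ee_{k-1}$-algebras. Combining the three steps with Definitions 5.16 and 5.21 of \cite{RSS25} gives the claimed equivalence.
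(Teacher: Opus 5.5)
Your proposal follows the same route as the paper: symmetric monoidality of group completion, of $\mathrm{Th}$, and of $\THH$ gives the equivalence over $I^{\mathrm{gp}}\times J^{\mathrm{gp}}$, and then restriction along $\gamma_I\times\gamma_J$ is shown to commute pointwise with the external tensor product (you spell out the straightening and Day-convolution steps in more detail than the paper does). One justification needs fixing: finite products of $\Ee_k$-spaces are not biproducts for finite $k$ (for instance, coproducts of $\Ee_2$-spaces are not products), so deduce $(M\times N)^{\mathrm{gp}}\simeq M^{\mathrm{gp}}\times N^{\mathrm{gp}}$ instead from $M^{\mathrm{gp}}\simeq\Omega BM$ and the fact that geometric realization commutes with finite products in $\mathcal S$.
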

\begin{proof}
Since group completion, the Thom spectrum functor, and topological
Hochschild homology are symmetric monoidal, we have a natural equivalence of  $I^{\mathrm{gp}}\times J^{\mathrm{gp}}$-graded $\Ee_{k-1}$-algebras
\begin{equation}\label{eq:gd}
	\begin{split}
	    \THH(\mathrm{Th}((\alpha_*\boxtimes\beta_*)^{\mathrm{gp}}))
		& \simeq \THH(\mathrm{Th}(\alpha_*^{\mathrm{gp}}\boxtimes\beta_*^{\mathrm{gp}}))\\
	    & \simeq \THH(\mathrm{Th}(\alpha_*^{\mathrm{gp}})\otimes\mathrm{Th}(\beta_*^{\mathrm{gp}}))\\
		& \simeq \THH(\mathrm{Th}(\alpha_*^{\mathrm{gp}}))\otimes\THH(\mathrm{Th}(\beta_*^{\mathrm{gp}})).
	\end{split}
\end{equation}
Moreover, the following diagram commutes
\[
\xymatrix{
\mathrm{Fun}(I^{\mathrm{gp}},\mathrm{Sp})\times \mathrm{Fun}(J^{\mathrm{gp}},\mathrm{Sp}) \ar[r]^-{\gamma_I^*\times\gamma_J^*} \ar[d]_-{\otimes}
&\mathrm{Fun}(I,\mathrm{Sp})\times \mathrm{Fun}(J,\mathrm{Sp}) \ar[d]^-{\otimes} &\\
\mathrm{Fun}(I^{\mathrm{gp}}\times J^{\mathrm{gp}},\mathrm{Sp}) \ar[r]^-{\gamma_{I\times J}^*} 
&\mathrm{Fun}(I\times J,\mathrm{Sp}),
}\]
since both compositions send $(X,Y)$ to 
\[I\times J\rightarrow I^{\mathrm{gp}}\times J^{\mathrm{gp}}\xrightarrow{X\times Y} \mathrm{Sp}\times \mathrm{Sp}\xrightarrow{\otimes}\mathrm{Sp}.\]
Consequently, the equivalence \eqref{eq:gd} implies that
\begin{equation*}
	\begin{split}
		\THH(\mathrm{Th}(\alpha_*),\alpha)\otimes\THH(\mathrm{Th}(\beta_*),\beta) & = \gamma_I^*\THH(\mathrm{Th}(\alpha_*^{\mathrm{gp}}))\otimes\gamma_J^*\THH(\mathrm{Th}(\beta_*^{\mathrm{gp}}))\\
		& \simeq \gamma_{I\times J}^* [\THH(\mathrm{Th}(\alpha_*^{\mathrm{gp}}))\otimes\THH(\mathrm{Th}(\beta_*^{\mathrm{gp}}))]\\
		& \simeq \gamma_{I\times J}^* \THH(\mathrm{Th}((\alpha_*\boxtimes\beta_*)^{\mathrm{gp}}))\\
		& = \THH(\mathrm{Th}(\alpha_*\boxtimes\beta_*),\alpha\boxtimes\beta). \qedhere
	\end{split}
\end{equation*}
\end{proof}

Recall from \cite[Construction 4.4]{RSS25} that the $\Zz^n_{\geq0}$-graded $\Ee_2$-algebra $\Ss[t_1,\cdots,t_n]$ is defined as the product of $\Zz^n$-graded Thom spectrum associated to $\Ee_2$-maps 
\[\mathrm{id}_{\Zz_{\geq0}^n}\colon\Zz_{\geq0}^n\to \Zz_{\geq0}^n\]
and
\[\xi_{a_1,\cdots,a_n}\colon\Zz_{\geq0}^n\xrightarrow{(a_1,\cdots,a_n)}\Zz^n\longrightarrow\mathrm{Pic}(\Ss)^n\xrightarrow{\otimes}\mathrm{Pic}(\Ss)\]
where $a_i$ is a nonnegative even integer for $1\leq i\leq n$. Here the second map is the $n$-fold product of the $\Ee_2$-map constructed in \cite[Corollary 7.2.5]{Lur15}. The inclusion $\{1\}\hookrightarrow\Zz_{\geq0}^n$ determines a map $\bigoplus_{i}\Ss^{a_i}\to \Ss[t_1,\cdots,t_n]$. By adjunction, this induces a map of $\Ee_2$-rings
\[\mathrm{Free}_{\Ee_2}(\bigoplus_{i}\Ss^{a_i})\to \Ss[t_1,\cdots,t_n].\] 
By \cite[Proposition 4.6]{RSS25} (see also \cite[Proposition 3.11]{ABM22}), this map is the colimit of a sequence
\begin{equation}\label{eq:attach}
	\mathrm{Free}_{\Ee_2}(\bigoplus_{i}\Ss^{a_i})=X_0\to X_1\to\cdots\to \mathrm{colim}_k X_k\simeq \Ss[t_1,\cdots,t_n]
\end{equation}
in the category of $\Zz_{\geq0}^n$-graded $\Ee_2$-rings where each stage is obtained from the previous via attaching finitely many even cells in the category of $\Zz_{\geq0}^n$-graded $\Ee_2$-rings.

We simply write $\THH(\Ss[t_1^{\pm1},\cdots,t_n^{\pm1}])_{\geq0}$ for the $\Zz^n_{\geq 0}$-graded $\Ee_1$-ring $\THH(\mathrm{Th}((\xi_{a_1,\cdots,a_n})_*),\xi_{a_1,\cdots,a_n})$, and denote its $(0,\cdots,0)$-graded summand by $\THH(\Ss[t_1^{\pm1},\cdots,t_n^{\pm1}])_{=0}$. We end this section by the following proposition which computes their mod $p$ homology.
\begin{prop}\label{prop:S[x]}
	There are isomorphisms of algebras
    \begin{equation}\label{eq:pic}
        H_*(\THH(\Ss[t_1,\cdots,t_n]);\Ff)\cong E(dt_1,\cdots,dt_n)\otimes P(t_1,\cdots,t_n),
    \end{equation}
	\begin{equation}\label{eq:geq0}
	H_*(\THH(\Ss[t_1^{\pm1},\cdots,t_n^{\pm1}])_{\geq0};\Ff)\cong E(d\mathrm{log}t_1,\cdots,d\mathrm{log}t_n)\otimes P(t_1,\cdots,t_n),
	\end{equation}
	and
	\begin{equation}\label{eq:=0}
		H_*(\THH(\Ss[t_1^{\pm1},\cdots,t_n^{\pm1}])_{=0};\Ff)\cong E(d\mathrm{log}t_1,\cdots,d\mathrm{log}t_n)
	\end{equation}
	with $|t_i|=a_i$, $|dt_i|=a_i+1$, and $|d\mathrm{log}t_i|=1$ for $1\leq i \leq n$.
\end{prop}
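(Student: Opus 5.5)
The plan is to reduce all three statements to the one-variable case $n=1$ and then use Lemma \ref{lem:gd} together with the Künneth isomorphism in mod $p$ homology. By construction, the $\Zz^n_{\geq0}$-graded $\Ee_2$-algebra $\Ss[t_1,\cdots,t_n]$ is the external product of the one-variable Thom spectra $\Ss[t_i]$. Likewise $\xi_{a_1,\cdots,a_n}=\xi_{a_1}\boxtimes\cdots\boxtimes\xi_{a_n}$. Since $\THH$ is symmetric monoidal, we get
\[\THH(\Ss[t_1,\cdots,t_n])\simeq\bigotimes_i\THH(\Ss[t_i]).\]
Applying Lemma \ref{lem:gd} $n-1$ times gives
\[\THH(\Ss[t_1^{\pm1},\cdots,t_n^{\pm1}])_{\geq0}\simeq\bigotimes_i\THH(\Ss[t_i^{\pm1}])_{\geq0}\]
as $\Zz^n_{\geq0}$-graded $\Ee_1$-rings. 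The $(0,\cdots,0)$-summand of an external product is the tensor product of the degree-$0$ summands, since $(X\otimes Y)_{(0,0)}=X_0\otimes Y_0$. The Künneth theorem over $\Ff$ then reduces \eqref{eq:pic}, \eqref{eq:geq0} and \eqref{eq:=0} to $n=1$, as algebras because the equivalences are multiplicative.

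For $n=1$, write $t=t_1$ and $a=a_1$. The Thom spectrum $\Ss[t]$ has underlying spectrum $\bigoplus_{k\geq0}\Ss^{ak}$, so $H_*(\Ss[t];\Ff)\cong P(t)$ with $|t|=a$. The homology of $\Ss[t^{\pm1}]=\mathrm{Th}(\xi_*^{\mathrm{gp}})$ is $P(t^{\pm1})$. We then run the Bökstedt spectral sequence
\[E^2=HH_*(H_*(-;\Ff))\Rightarrow H_*(\THH(-);\Ff).\]
Its $E^2$-term is $P(t)\otimes E(dt)$ for $\Ss[t]$ and $P(t^{\pm1})\otimes E(d\log t)$ for $\Ss[t^{\pm1}]$, where $d\log t=t^{-1}dt$ has degree $1$. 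The algebra generators lie in filtrations $0$ and $1$, so there is no room for differentials and the spectral sequences collapse. Multiplicative extensions are the only remaining issue, and the only candidate is $(dt)^2$ or $(d\log t)^2$. For $p$ odd these classes are in odd degree, so they square to zero by graded commutativity. For $p=2$ I would argue instead via the Thom isomorphism: $\THH$ of a Thom spectrum is the Thom spectrum of $L B(\mathbb N)\simeq \mathbb N\times S^1$-type data over the cyclic bar construction (Blumberg–Cohen–Schlichtkrull), so its homology is $H_*(B^{cy}(\Zz_{\geq0}))$ or $H_*(B^{cy}(\Zz))=H_*(\Zz\times S^1)$ additively and multiplicatively. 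This also handles odd $p$ uniformly, and I would present it as the main route, with the Bökstedt argument as a check.

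Finally, the pullback along $\gamma\colon\Zz_{\geq0}\to\Zz$ keeps exactly the summands of weight $k\geq0$. The weight-$k$ part of $P(t^{\pm1})\otimes E(d\log t)$ is spanned by $t^k$ and $t^k d\log t$. So the $\geq0$ part is $P(t)\otimes E(d\log t)$, and the weight-$0$ part is $E(d\log t)$, which proves \eqref{eq:geq0} and \eqref{eq:=0} for $n=1$.

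I expect the main obstacle to be bookkeeping rather than computation. The delicate points are that the Thom class and the grading identification are compatible with the $\Ee_1$-multiplication on the $\Zz$-graded $\THH$, so that the identifications hold as algebras, and that the Künneth equivalences of Lemma \ref{lem:gd} respect the chosen names $dt_i$ and $d\log t_i$ (in particular, that $d\log t_i$ maps to $t_i^{-1}dt_i$).
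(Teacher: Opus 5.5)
Your proposal is correct and follows essentially the paper's route. You compute the one-variable case, pass to $n$ variables by the K\"unneth theorem, and obtain the second and third isomorphisms by restricting to the $\Zz^n_{\geq0}$- and $(0,\cdots,0)$-graded summands, with Lemma \ref{lem:gd} ensuring that restriction commutes with the external product. The paper tensors first and restricts afterwards, while you restrict first; these are equivalent. The only real difference is that the paper cites \cite[Proposition 11.4]{RSS25} for the one-variable input, where you run the B\"okstedt spectral sequence.

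One small repair is needed. $\THH$ of an $\Ee_2$-ring is only $\Ee_1$, so you cannot invoke graded commutativity to kill $(dt_i)^2$ and $(d\mathrm{log}t_i)^2$. Instead, note that $(dt_i)^2$ would lie in weight $2$ and degree $2a_i+2$, and $(d\mathrm{log}t_i)^2$ in weight $0$ and degree $2$, where the homology vanishes. This works for every $p$ and makes the separate $p=2$ detour through Blumberg--Cohen--Schlichtkrull unnecessary.
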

\begin{proof}
	By \cite[Proposition 11.4]{RSS25} and the K\"unneth
theorem, there is an isomorphism of $\Zz_{\geq0}$-graded algebras
\[H_*(\THH(\Ss[t_i]);\Ff)\cong E(dt_i)\otimes P(t_i)\]
and an isomorphism of $\Zz$-graded algebras
\[H_*(\THH(\Ss[t_i^{\pm1}]);\Ff)\cong E(d\mathrm{log}t_i)\otimes P(t_i^{\pm1})\]
with $|t_i|=a_i$, $|dt_i|=a_i+1$, and $|d\mathrm{log}t_i|=1$ for $1\leq i\leq n.$ The associated Tor spectral sequences yields the first isomorphism and an
isomorphism of $\Zz^n$-graded algebras
	\[H_*(\THH(\Ss[t_1^{\pm1},\cdots,t_n^{\pm1}]);\Ff)\cong E(d\mathrm{log}t_1,\cdots,d\mathrm{log}t_n)\otimes P(t_1^{\pm1},\cdots,t_n^{\pm1}).\]
	Restricting to the $\Zz_{\geq0}^n$-graded summand yields the second isomorphism by Lemma \ref{lem:gd}. Restricting further to the
$(0,\cdots,0)$-graded summand yields the third isomorphism.
\end{proof}

\section{Prelog structures on \texorpdfstring{$\BP{n}$}{BP<n>}}\label{sec3}

In this section, we assume $n\geq2$ and recall the prelog structures on $\BP{n}$. To this end, by \cite[Theorem 2.0.6]{HW22}, we fix an $\Ee_3$ form of
$\BP{n}$. Moreover, by the construction in the proof of
\cite[Theorem 2.0.6]{HW22}, we may also choose $\Ee_3$ forms of
$\BP{i}$, for $1\leq i\leq n$, such that each map
\[ f_i\colon \BP{i}\longrightarrow \BP{i-1} \]
is an $\Ee_3$-ring map, and the induced map on homotopy groups is the canonical quotient
\[(f_i)_*\colon \pi_*\BP{i} \cong \mathbb Z_{(p)}[v_1,\cdots,v_i] \longrightarrow \pi_*\BP{i-1} \cong \mathbb Z_{(p)}[v_1,\cdots,v_{i-1}],\]
sending $v_i$ to $0$. 

Next, for $1\leq i\leq n$, let $\Ss[x_i]$ denote the $\Zz_{\geq0}$-graded Thom spectrum associated to $\Ee_2$-maps $\mathrm{id}_{\Zz_{\geq0}}\colon\Zz_{\geq0}\to \Zz_{\geq0}$ and 
\begin{equation}\label{eq:xi}
	\xi_{2p^i-2}\colon\Zz_{\geq0}\xrightarrow{2p^i-2}\Zz\to\mathrm{Pic}(\Ss).
\end{equation}
Hence a choice of representative $v_i:\mathbb S^{2p^i-2}\longrightarrow \BP{i}$ of the class $v_i\in \pi_{2p^i-2}\BP{i}$ extends to an $\Ee_2$-ring map 
\[\bar{v}_i\colon\Ss[x_i]\to \BP{i},\]
by Equation \eqref{eq:attach}. Note that the degree-zero summand of $\Ss[x_i]$ is $\Ss$.

The map $\bar{v}_i$ is not canonical, since it depends on the choices made in the construction. This ambiguity will not play a role in the general discussion below. However, in order to carry out the explicit computations of Section \ref{sec4}, we will fix a particular choice of $\bar{v}_i$.

Let $J_{\mathbb{C}}\colon\mathrm{BU}\times \Zz\to \mathrm{Pic}(\Ss)$ be the $J$-homomorphism. Its adjoint map $j\colon\Zz\to \mathrm{Fun}(\mathrm{BU},\mathrm{Pic}(\Ss))$ is lax symmetric monoidal, by \cite[\S3]{LNP25}. Therefore, the following composition
\[F\colon\Zz_{\geq0}\xrightarrow{2p^i-2}\Zz\xrightarrow{j} \mathrm{Fun}(\mathrm{BU},\mathrm{Pic}(\Ss))\xrightarrow{\mathrm{Th}}\mathrm{Sp}\]
is lax symmetric monoidal, since the Thom spectrum functor is symmetric monoidal. On the other hand, we can also consider the functor
\[G\colon\Zz_{\geq0}\xrightarrow{\xi_{2p^i-2}}\mathrm{Pic}(\Ss)\xrightarrow{-\otimes \mathrm{MU}}\mathrm{Pic}(\mathrm{MU})\to\mathrm{Sp},\]
where the first functor is Functor \eqref{eq:xi} and the last functor is the forgetful functor which is lax symmetric monoidal. By \cite[p. 85]{HL18}, $(-\otimes\mathrm{MU})\circ\xi_{2p^n-2}$ is an $\Ee_{\infty}$-map of spaces. Hence $G$ is also lax symmetric monoidal.

\begin{lem}\label{lem:MU and S}
	The lax symmetric monoidal functors $F$ and $G$ are equivalent.
\end{lem}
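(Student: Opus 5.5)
We will show that both $F$ and $G$ factor, as lax symmetric monoidal functors, through the symmetric monoidal functor $\Zz_{\geq0}\to\mathrm{Pic}(\mathrm{MU})$ sending $m$ to $\Sigma^{m(2p^i-2)}\mathrm{MU}$, and that such a factorization is essentially unique. Concretely, the Thom spectrum of the $J$-homomorphism restricted along $\mathrm{BU}\times\{k\}$ is $\Sigma^{2k}\mathrm{MU}$. The lax symmetric monoidal structure on $j$ from \cite[\S3]{LNP25} then makes $F$ a lax symmetric monoidal functor whose value at $m$ is $\Sigma^{m(2p^i-2)}\mathrm{MU}$, with structure maps given by the multiplication of $\mathrm{MU}$. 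Hence $F$ is naturally a lax symmetric monoidal functor to $\mathrm{Mod}_{\mathrm{MU}}$, in fact strong monoidal as a functor into $\mathrm{Mod}_{\mathrm{MU}}$, since $\mathrm{Th}(j(a))\otimes_{\mathrm{MU}}\mathrm{Th}(j(b))\simeq \mathrm{Th}(j(a+b))$. It therefore lands in $\mathrm{Pic}(\mathrm{MU})$.

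The first step is to rewrite $F$ as the composite of an $\Ee_\infty$-map $\Zz_{\geq0}\to\mathrm{Pic}(\mathrm{MU})$ with the forgetful functor $\mathrm{Mod}_{\mathrm{MU}}\to\mathrm{Sp}$. By construction, $G$ is already of this form, via the $\Ee_\infty$-map $(-\otimes\mathrm{MU})\circ\xi_{2p^i-2}$. Since $\Zz_{\geq0}$ is the free $\Ee_\infty$-space on a point, and its group completion $\Zz$ is the free grouplike one, it then suffices to compare the two maps $\Zz\to\mathrm{Pic}(\mathrm{MU})$ of grouplike $\Ee_\infty$-spaces. These correspond to maps of connective spectra $H\Zz\to\mathfrak{pic}(\mathrm{MU})$ after the usual delooping, or more simply to maps of spectra $\Ss\to\mathfrak{pic}(\mathrm{MU})$. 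Such maps are classified up to homotopy by their value on $1$, i.e.\ by the point $\Sigma^{2p^i-2}\mathrm{MU}\in\mathrm{Pic}(\mathrm{MU})$, together with a path. Because $\Zz$ is free only as a grouplike $\Ee_\infty$-space on the sphere spectrum $\Ss$, and not as $H\Zz$, the correct statement is that $\Ee_\infty$-maps $\Zz\to X$ out of the free grouplike object are classified by $\pi_0$ of maps $\Ss\to\mathfrak{pic}(\mathrm{MU})$, that is, by $\Omega^\infty\mathfrak{pic}(\mathrm{MU})$. Both maps send $1$ to $\Sigma^{2p^i-2}\mathrm{MU}$, so they agree in $\pi_0\mathrm{Pic}(\mathrm{MU})$, and by freeness this suffices.

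The main obstacle is the first step. We must check that the lax structure on $F$ coming from $j$ really refines to a symmetric monoidal functor into $\mathrm{Pic}(\mathrm{MU})$ compatible with forgetting to $\mathrm{Sp}$. This requires identifying $\mathrm{Th}\circ j$ with the composite $\Zz\to\mathrm{Pic}(\mathrm{MU})\to\mathrm{Sp}$. To do this we would use that $j$ lands in $\mathrm{BU}$-parametrized objects: the Thom functor on $\mathrm{Fun}(\mathrm{BU},\mathrm{Pic}(\Ss))$ is symmetric monoidal, so it carries $j(0)$, which is the unit $\mathrm{Th}(J)$ restricted to $\mathrm{BU}$, namely $\mathrm{MU}$, to an $\Ee_\infty$-algebra. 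Moreover, $j(k)$ is a module over $j(0)$ in $\mathrm{Fun}(\mathrm{BU},\mathrm{Pic}(\Ss))$ via the Day convolution coming from the group structure on $\mathrm{BU}$. We would appeal to \cite{LNP25} for this module structure and for the invertibility of $\mathrm{Th}(j(k))$ over $\mathrm{MU}$. Once this is in place, the uniqueness argument of the previous paragraph finishes the proof, since any lax symmetric monoidal functor $\Zz_{\geq0}\to\mathrm{Sp}$ obtained by forgetting from $\mathrm{Pic}(\mathrm{MU})$ is determined by the underlying $\Ee_\infty$-map.
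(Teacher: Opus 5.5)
Your reduction to a uniqueness statement breaks at its key step, because the freeness you invoke is false. $\Zz_{\geq 0}$ is the free $\Ee_1$-space on a point, not the free $\Ee_\infty$-space (that is $\coprod_{n\geq 0}B\Sigma_n$). Likewise, the free grouplike $\Ee_\infty$-space on a point is $\Omega^\infty\Ss$, not $\Zz$.

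Grouplike $\Ee_\infty$-maps $\Zz\to\mathrm{Pic}(\mathrm{MU})$ are points of $\mathrm{Map}_{\mathrm{Sp}}(H\Zz,\mathfrak{pic}(\mathrm{MU}))$, and ``the value on $1$'' is restriction along the unit $\Ss\to H\Zz$. This restriction forgets exactly the maps out of the cofiber of $\Ss\to H\Zz$. That cofiber is a $1$-connected spectrum with $\pi_2\cong\pi_1\Ss\cong\Zz/2$ and further torsion above, and nothing in your argument makes those contributions vanish. So neither existence nor uniqueness of an extension of $1\mapsto\Sigma^{2p^i-2}\mathrm{MU}$ follows formally.

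A quick sanity check: your claim would produce an $\Ee_\infty$-map $\Zz\to\mathrm{Pic}(\Ss)$ with $1\mapsto\Ss^1$. There is not even an $\Ee_2$-map of this kind, because the symmetry of $\Ss^1\otimes\Ss^1$ is $-1$ while $\Zz$ has trivial braiding. This is why the paper uses even $a_i$ and only $\Ee_2$-maps $\xi$. Thus the step that is supposed to carry all the coherence content of the lemma is unproved. The first step, refining $F$ to a symmetric monoidal functor into $\mathrm{Pic}(\mathrm{MU})$ compatible with forgetting, is, as you acknowledge, only sketched.

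The paper does not go through any universal property. It checks $F(k)\simeq G(k)\simeq\Sigma^{(2p^i-2)k}\mathrm{MU}$ and identifies the lax structure maps of both functors with the multiplication of $\mathrm{MU}$; that is, it compares the two structures directly.

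If you want a rigidity argument instead, it has to come from an actual obstruction computation. For instance, work at the $\Ee_2$-level, which is all that is used afterwards to get $\mathrm{MU}[x_i]\simeq\Ss[x_i]\otimes\mathrm{MU}$ as $\Ee_2$-rings. There, $\Ee_2$-maps $\Zz\to\mathrm{Pic}(\mathrm{MU})$ are pointed maps $\mathbb{CP}^\infty\simeq B^2\Zz\to B^2\mathrm{Pic}(\mathrm{MU})$. Two such maps agreeing on $\pi_2$ are homotopic, because $H^n(\mathbb{CP}^\infty;-)=0$ for odd $n$, while $\pi_nB^2\mathrm{Pic}(\mathrm{MU})\cong\pi_{n-3}\mathrm{MU}=0$ for even $n\geq 4$. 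Even then, you would still need to carry out your first step to know that $F$ arises from such a map.
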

\begin{proof}
	To see that $F$ and $G$ are equivalent as functors, it suffices to observe that, for any $k\geq 0$,
	\[F(k)\simeq G(k)\simeq \Sigma^{(2p^i-2)k}\mathrm{MU},\]
	since $\Zz_{\geq0}$ is discrete as a category. Moreover, the lax symmetric monoidal structures on both functors are induced by the product on $\mathrm{MU}$. Hence $F$ and $G$ are equivalent as lax symmetric monoidal functors.
\end{proof}
By \cite[Proposition 2.6.2]{HW22}, there is an $\Ee_3$-ring map $g_i\colon\mathrm{MU}[x_i]\to \BP{i}$ such that $\pi_*(g_i)(x_i)=v_i$, where $\mathrm{MU}[x_i]$ is the colimit of $F$. Since the functor $-\otimes\mathrm{MU}$ commutes with colimit, there is an equivalence of $\Ee_2$-rings $\mathrm{MU}[x_i]\simeq \Ss[x_i]\otimes\mathrm{MU}$ by Lemma \ref{lem:MU and S}. Consequently, the underlying $\Ee_2$-ring map of $g$ gives rise to an $\Ee_2$-ring map $\bar{v}_i\colon \Ss[x_i]\to \BP{i}$. This is the choice of $\bar{v}_i$ mentioned above.

\begin{lem}\label{lem:aug}
	For each $1\leq i\leq n,$ there exists an $\Ee_2$-augmentation
	\[\epsilon\colon\Ss[x_i]\to \Ss\]
	such that the composition of $\Ee_2$-ring map
	\[\Ss[x_i]\to \BP{i}\to \BP{i-1}\]
	factors as
	\[\Ss[x_i]\xrightarrow{\epsilon} \Ss \xrightarrow{\eta} \BP{i-1},\]
	where $\eta$ is the unit map of $\BP{i-1}$.
\end{lem}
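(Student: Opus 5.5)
We will take $\epsilon$ to be the $\Ee_2$-map induced by the augmentation of the grading monoid: restriction along the inclusion of the degree-zero summand. Concretely, $\Ss[x_i]$ is the colimit of the $\Zz_{\geq0}$-graded Thom spectrum built from $\xi_{2p^i-2}$. The map of discrete commutative monoids $\Zz_{\geq0}\to\{0\}$ is compatible with $\xi_{2p^i-2}$, since $\xi$ sends $0$ to the unit of $\mathrm{Pic}(\Ss)$. Left Kan extension along $\Zz_{\geq0}\to\{0\}$ would give the colimit, not the projection. So instead we use the symmetric monoidal functor $\mathrm{Fun}(\Zz_{\geq0},\mathrm{Sp})\to\mathrm{Sp}$ that takes the degree-zero part, $X\mapsto X_0$. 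It is symmetric monoidal for Day convolution because $\Zz_{\geq0}$ has no nontrivial factorizations of $0$. It therefore sends the graded $\Ee_2$-algebra $\Ss[x_i]$ to $\Ss[x_i]_0\simeq\Ss$. Composing with the natural $\Ee_2$ map from the colimit of a nonnegatively graded algebra to its degree-zero part (the augmentation killing positive degrees) gives an $\Ee_2$-augmentation $\epsilon\colon\Ss[x_i]\to\Ss$.

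Next we show the composite $f_i\circ\bar v_i\colon\Ss[x_i]\to\BP{i-1}$ factors through $\epsilon$. We will use the specific choice of $\bar v_i$ from the preceding paragraph. That map is the underlying $\Ee_2$-map of $\Ss[x_i]\to\Ss[x_i]\otimes\mathrm{MU}\simeq\mathrm{MU}[x_i]\xrightarrow{g_i}\BP{i}$, using Lemma \ref{lem:MU and S}. It therefore suffices to work $\mathrm{MU}$-linearly: we show that $f_i\circ g_i\colon\mathrm{MU}[x_i]\to\BP{i-1}$ is equivalent, as an $\Ee_2$-$\mathrm{MU}$-algebra map, to $\mathrm{MU}[x_i]\xrightarrow{\epsilon\otimes\mathrm{MU}}\mathrm{MU}\to\BP{i-1}$.

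For this we use the cell structure from Equation \eqref{eq:attach}, base-changed to $\mathrm{MU}$. Then $\mathrm{MU}[x_i]$ is obtained from $\mathrm{Free}_{\Ee_2,\mathrm{MU}}(\Sigma^{2p^i-2}\mathrm{MU})$ by attaching even cells of internal grading $\geq 2$. Maps from $\mathrm{MU}[x_i]$ to $\BP{i-1}$ can be analysed by obstruction theory along this filtration. The starting map is determined by the class $f_i(v_i)=0\in\pi_{2p^i-2}\BP{i-1}$, so it agrees with the map through the augmentation. At each subsequent stage, the obstructions and the choices of extension live in odd homotopy groups $\pi_{\mathrm{odd}}\BP{i-1}=0$. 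Hence the space of extensions restricting to a given map on $X_0$ is connected in the relevant sense, and the two maps agree.

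Alternatively, one could arrange the factorization at the level of \cite[Proposition 2.6.2]{HW22}, where $\BP{i-1}$ is built as a quotient of $\BP{i}$ by $v_i$. The main obstacle is this evenness/obstruction argument in the graded $\Ee_2$ setting. The difficulty is ensuring that the homotopy of the space of $\Ee_2$-maps is controlled by even cells with odd-degree targets. If this proves delicate, we would first prove the statement for each finite stage $X_k$ and then pass to the colimit, checking that $\lim^1$ terms vanish for the same parity reasons.
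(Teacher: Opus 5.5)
Your construction of $\epsilon$ matches the paper's: both use the unit of the symmetric monoidal localization $\mathrm{Fun}(\Zz_{\geq0},\mathrm{Sp})\to\mathrm{Sp}$, $X\mapsto X_0$. The factorization argument, however, has a genuine gap.

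You claim that at each cell stage ``the obstructions and the choices of extension live in odd homotopy groups.'' Only the obstructions do. The stage $X_{m+1}$ is the pushout of $\Ss\leftarrow\mathrm{Free}_{\Ee_2}(\bigoplus_k\Ss^k)\to X_m$ with $k$ odd, so the obstruction to extending a map lies in $\prod_k\pi_k\BP{i-1}=0$. But the fiber of $\mathrm{Map}_{\Ee_2}(X_{m+1},\BP{i-1})\to\mathrm{Map}_{\Ee_2}(X_m,\BP{i-1})$ is $\prod_k\mathrm{Map}(\Ss^{k+1},\BP{i-1})$, exactly as computed in the proof of Lemma \ref{lem:lift}. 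Its $\pi_0$ is $\prod_k\pi_{k+1}\BP{i-1}$, which sits in \emph{even} degrees and need not vanish once $i\geq2$. Equivalently, extending a homotopy between two maps across an even cell meets an obstruction in $\pi_{k+1}\BP{i-1}$. So the homotopy class of an $\Ee_2$-map out of $\Ss[x_i]$ (or $\mathrm{MU}[x_i]$) is not determined by its restriction to the bottom cell. The fact that $f_i\circ g_i$ and $\eta\circ(\epsilon\otimes\mathrm{MU})$ both kill $x_i$ therefore does not make them homotopic. Your $\lim^1$ fallback fails for the same reason, since $\pi_1$ of these mapping spaces is again built from even homotopy of $\BP{i-1}$.

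The paper does not attempt a uniqueness statement; it inducts over the stages $X_m$. The bottom map $\mathrm{Free}_{\Ee_2}(\Ss^{2p^i-2})\to\BP{i-1}$ is null because $f_i(v_i)=0. The factorization through $\eta$ is then propagated across each pushout by forming a cube of spans and taking pushouts. This uses that $\Ee_2$-maps $\mathrm{Free}_{\Ee_2}(\Ss^k)\to\BP{i-1}$ with $k$ odd are null. Be aware that even this gluing needs a compatibility the paper does not check: the null-homotopy of the attaching map coming from $\bar v_i$ (pushed forward along $f_i$) must agree with the one coming from the factorization on $X_m$. That is a condition in the same even groups, so any cell-by-cell argument for a \emph{fixed} $\bar v_i$ has to confront it.

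The route that sidesteps this is the one you mention only in passing. Suppose $f_i$ is identified, as a map of $\Ee_2$-rings, with the base-change map $\BP{i}\to\BP{i}\otimes_{\mathrm{MU}[x_i]}\mathrm{MU}$. This is the compatibility invoked in Remark \ref{rem:pi(a)}, and it has to come from the construction in \cite{HW22}. Then $f_i\circ g_i\simeq\eta\circ(\epsilon\otimes\mathrm{MU})$ is just the commutativity of the pushout square, and precomposing with $\Ss[x_i]\to\mathrm{MU}[x_i]$ gives the lemma.
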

\begin{proof}
	By \cite[Lemma B.0.6]{HW22}, there is a symmetric monoidal localization
	\[\mathrm{Fun}(\Zz_{\geq0},\mathrm{Sp})\to \mathrm{Sp},\]
	which sends a $\Zz_{\geq0}$-graded spectrum its degree-zero summand. Since $\Ss[x_i]$ is an $\Ee_2$-algebra in $\mathrm{Fun}(\Zz_{\geq0},\mathrm{Sp})$ whose degree-zero summand is $\Ss$, the unit of this localization gives the desired $\Ee_2$-augmentation.
	
	We prove the asserted factorization by induction. For the initial step, since $f_*(v_i)=0$, the composite
	\[\mathrm{Free}_{\Ee_2}(\Ss^{2p^i-2})\to \BP{i}\to \BP{i-1}\]
	is null. Hence it factors through
	\[\mathrm{Free}_{\Ee_2}(\Ss^{2p^i-2})\to \mathrm{Free}_{\Ee_2}(0)\simeq \Ss\to\BP{i-1}.\]
	Now, suppose that $X_n\to \BP{i}\to \BP{i-1}$ factors through the unit map $\Ss\to\BP{i-1}$. Recall that $X_{n+1}$ is obtained as the pushout of the span 
	\[\Ss\leftarrow \mathrm{Free}_{\Ee_2}(\bigoplus_{k\in I_n}\Ss^k)\xrightarrow{f} X_n\]
	in $\Ee_2$-rings, where $I_n$ is a finite set consisting of odd positive numbers. Since $\BP{i-1}$ is even, any $\Ee_2$-ring map $\mathrm{Free}_{\Ee_2}(\Ss^k)\xrightarrow{f}\ \BP{i-1}$ is null, and hence factors through $\mathrm{Free}_{\Ee_2}(0)\simeq \Ss$. Together with the induction hypothesis, this gives the following commutative diagram in $\Ee_2$-rings:
\[
\begin{tikzcd}[row sep=0.8em, column sep =0.5em]
   & \Ss \arrow[dd, dashed] \arrow[dl] && \mathrm{Free}_{\Ee_2}(\bigoplus_{k\in I_n}\Ss^k) \arrow[ll] \arrow[rr] \arrow[dl] \arrow[dd, dashed] &&
   X_n \arrow[dd] \arrow[dl] \\
    \Ss \arrow[dd] && \Ss \arrow[ll] \arrow[rr] \arrow[dd] &&
   \Ss \arrow[dd] \\
   & \BP{i} \arrow[dl, dashed] && \BP{i} \arrow[ll,dashed]        
   \arrow[rr, dashed ] \arrow[dl, dashed] && \BP{i} \arrow[dl] \\
   \BP{i-1} && \BP{i-1} \arrow[ll] \arrow[rr] && \BP{i-1}.
\end{tikzcd}
\]
Taking pushouts of the horizontal spans, we obtain the desired result.
\end{proof}

We also require the $\bar{v}_i$ to be mutually compatible for $1\leq i\leq n$. To this end, let $\Ss[t]$ be the Thom spectrum associated to the $\Ee_2$-map of spaces $\Zz_{\geq0}\xrightarrow{a}\Zz\to \mathrm{Pic}(\Ss),$ where $a$ is a positive even integer.

\begin{lem}\label{lem:lift}
    Give $1\leq i\leq n$, let $f\colon\Ss[t]\to\BP{i-1}$ be an $\Ee_2$-ring map. Then $f$ admits an $\Ee_2$-lift
    \[\widetilde{f}\colon\Ss[t]\to\BP{i}\]
    such that $f_i\circ\widetilde{f}\simeq f.$
\end{lem}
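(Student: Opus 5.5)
We build the lift $\widetilde f$ by an obstruction-theoretic induction along the cell filtration \eqref{eq:attach}, in the same way as the proof of Lemma \ref{lem:aug}. Since $\Ss[t]$ is a single-variable instance of $\Ss[t_1,\cdots,t_n]$, there is a sequence of $\Ee_2$-rings
\[\mathrm{Free}_{\Ee_2}(\Ss^{a})=X_0\to X_1\to\cdots\to\mathrm{colim}_k X_k\simeq\Ss[t].\]
Each $X_{k+1}$ is the pushout of $\Ss\leftarrow\mathrm{Free}_{\Ee_2}(\bigoplus_{j\in I_k}\Ss^{j})\to X_k$, where $I_k$ is a finite set of odd integers. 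Mapping spaces out of a sequential colimit are limits of mapping spaces. So it suffices to build compatible $\Ee_2$-maps $\widetilde f_k\colon X_k\to\BP{i}$ with $f_i\circ\widetilde f_k\simeq f|_{X_k}$, compatible in the limit. The surjectivity of $\pi_0$ on the limit also needs a $\lim^1$ argument, discussed below.

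\textbf{Base step.} An $\Ee_2$-map $X_0\to\BP{i}$ is the same as a class in $\pi_a\BP{i}$. The restriction $f|_{X_0}$ corresponds to some $y\in\pi_a\BP{i-1}$. Because $(f_i)_*\colon\pi_*\BP{i}\to\pi_*\BP{i-1}$ is the surjection $\Zz_{(p)}[v_1,\cdots,v_i]\to\Zz_{(p)}[v_1,\cdots,v_{i-1}]$, we can pick a preimage $\widetilde y\in\pi_a\BP{i}$. This gives $\widetilde f_0$ together with a homotopy $f_i\circ\widetilde f_0\simeq f|_{X_0}$.

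\textbf{Inductive step.} Suppose $\widetilde f_k$ and the homotopy $f_i\circ\widetilde f_k\simeq f|_{X_k}$ are given. We must extend $\widetilde f_k$ over the cells, compatibly with $f|_{X_{k+1}}$. Since $\BP{i}$ is even, the composite $\mathrm{Free}_{\Ee_2}(\bigoplus\Ss^{j})\to X_k\to\BP{i}$ is null: it is determined by classes in $\pi_j\BP{i}=0$ for odd $j$. So an extension $\widetilde f_{k+1}$ exists. The real issue is matching it with $f|_{X_{k+1}}$.

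The space of $\Ee_2$-maps $X_{k+1}\to\BP{i}$ over $f|_{X_{k+1}}$, extending $\widetilde f_k$, is governed by the fiber of
\[\Omega^{\infty+j}\BP{i}\to\Omega^{\infty+j}\BP{i-1},\]
namely by null-homotopies of the cell attaching maps. Its components and the obstructions to lifting live in $\pi_{j+1}$ and $\pi_j$ of $\mathrm{fib}(f_i)\simeq\Sigma^{2p^i-2}\BP{i}$. This uses the cofiber sequence $\Sigma^{2p^i-2}\BP{i}\xrightarrow{v_i}\BP{i}\to\BP{i-1}$, which holds for the chosen forms of \cite{HW22}. For odd $j$ both $\pi_j$ and $\pi_{j+1}$ sit against an even spectrum. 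The obstruction group is $\pi_j(\mathrm{fib} f_i)=0$. The choices are parametrized by $\pi_{j+1}$, and the map $\pi_{j+1}\BP{i}\to\pi_{j+1}\BP{i-1}$ is surjective. Hence the chosen null-homotopy in $\BP{i}$ can be adjusted so that its image agrees with the given null-homotopy in $\BP{i-1}$. Concretely, the difference of two null-homotopies of the odd cell map into $\BP{i-1}$ is an element of $\pi_{j+1}\BP{i-1}$, and we lift that element. Taking pushouts then produces $\widetilde f_{k+1}$ with the required homotopy.

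\textbf{Main obstacle.} The hardest step is making the last argument precise in the presence of the $\Ee_2$-coherence: the homotopy $f_i\circ\widetilde f_k\simeq f|_{X_k}$ must be carried along as part of the data. The cleanest way is to work in the $\infty$-category of $\Ee_2$-rings over $\BP{i-1}$. Mapping $X_{k+1}$ there, and using the pushout, we get a fiber sequence of mapping spaces
\[\mathrm{Map}_{/\BP{i-1}}(X_{k+1},\BP{i})\to\mathrm{Map}_{/\BP{i-1}}(X_k,\BP{i})\to\prod_{j\in I_k}\Omega^{\infty+j}\,\mathrm{fib}(f_i).\]
Here $\mathrm{fib}(f_i)$ has homotopy only in even degrees, so $\pi_0$ of the last term vanishes. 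Therefore every point of the middle term lifts, which proves the inductive step. Moreover $\pi_1$ of the last term vanishes as well ($\pi_{j+1}\mathrm{fib}(f_i)$ with $j+1$ even is nonzero in general, so some care is needed here). For the passage to the colimit, the $\lim^1$ term in the Milnor sequence vanishes because the transition maps are surjective on $\pi_0$ by the inductive step. Passing to the colimit then yields $\widetilde f$ with $f_i\circ\widetilde f\simeq f$.
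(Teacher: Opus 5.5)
Your proposal is correct and essentially follows the paper's proof. Both induct along the cell filtration \eqref{eq:attach}, get the base step from surjectivity of $\pi_a\BP{i}\to\pi_a\BP{i-1}$, and get the inductive step from the pullback of $\Ee_2$ mapping spaces; your fiber sequence over $\BP{i-1}$, together with $\pi_j\mathrm{fib}(f_i)=0$ for odd $j$, is a cleaner packaging of the paper's diagram chase with surjectivity of $\pi_{j+1}\BP{i}\to\pi_{j+1}\BP{i-1}$.

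Two side remarks are wrong, though neither is needed. First, $\pi_1$ of $\prod_j\Omega^{\infty+j}\mathrm{fib}(f_i)$ is $\prod_j\pi_{j+1}\mathrm{fib}(f_i)$, which does not vanish. Second, the Milnor $\lim^1$ term is built from $\pi_1\mathrm{Map}_{/\BP{i-1}}(X_k,\BP{i})$, so $\pi_0$-surjectivity of the transition maps is not why it vanishes. It does vanish, because these $\pi_1$'s are all zero: $\pi_{a+1}\mathrm{fib}(f_i)=0$, and $\pi_2$ of the base is $\prod_j\pi_{j+2}\mathrm{fib}(f_i)=0$. But existence of $\widetilde f$ needs only that a tower with nonempty first term and $\pi_0$-surjective transition maps has nonempty limit.
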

\begin{proof}
    Let $\Ss[t]$ be the colimit of a sequence $\{X_n\}_{n\geq0}$ in Equation \eqref{eq:attach}. By our choice of $f_i$, the induced map
    \[\pi_a(\BP{i})\cong \pi_0(\mathrm{Map}_{\Ee_2}(\mathrm{Free}_{\Ee_2}(\Ss^a),\BP{i})) \to \pi_0(\mathrm{Map}_{\Ee_2}(\mathrm{Free}_{\Ee_2}(\Ss^a),\BP{i-1}))\cong\pi_a(\BP{i-1})\]
    is surjective. It follows that the map $X_0=\mathrm{Free}_{\Ee_2}(\Ss^a)\to \BP{i-1}$ admits a lift along $f_i$ to $\BP{i}$. Next, suppose that $X_{n+1}$ is the pushout of the span
    \[\Ss\leftarrow\mathrm{Free}_{\Ee_2}(\bigoplus_{n_i\geq 0}S^{2n_i+1})\to X_n\]
    in the category of $\Ee_2$-rings. Denote the induced map $X_n\to X_{n+1}$ by $i_n$. Then we obtain a pullback of spaces
\[
\xymatrix{
\mathrm{Map}_{\Ee_2}(X_{n+1},\BP{i-1}) \ar[r]^-{i_n^*} \ar[d]
& \mathrm{Map}_{\Ee_2}(X_n,\BP{i-1}) \ar[d] &\\
\mathrm{Map}_{\Ee_2}(\Ss,\BP{i-1}) \ar[r] & \mathrm{Map}_{\Ee_2}(\mathrm{Free}_{\Ee_2}(\bigoplus_{n_i\geq 0}\Ss^{2n_i+1}),\BP{i-1}).}
\]
Since $\Ss$ is the unit $\Ee_2$-ring, $\mathrm{Map}_{\Ee_2}(\Ss,\BP{n-1})$ is contractible. Moreover, there is an equivalence
\begin{equation*}
    \begin{split}
        \mathrm{Map}_{\Ee_2}(\mathrm{Free}_{\Ee_2}(\bigoplus_{n_i\geq 0}\Ss^{2n_i+1}),\BP{i-1}) & \simeq \prod_{n_i\geq 0}\mathrm{Map}_{\Ee_2}(\mathrm{Free}_{\Ee_2}(\Ss^{2n_i+1)},\BP{i-1})\\
        & \simeq \prod_{n_i\geq 0}\mathrm{Map}(\Ss^{2n_i+1},\BP{i-1}).
    \end{split}
\end{equation*}
Thus, the fiber of $i_n^*$ over any $\Ee_2$-ring map is $\prod_{n_i\geq 0}\mathrm{Map}(\Ss^{2n_i+2},\BP{i-1}).$ A similar discussion applies to $\BP{i}$. Now, suppose that the $\Ee_2$-ring map $X_n\to \BP{i-1}$ has been lifted along $f_i$ to $\BP{i}$. Then we obtain the following commutative diagram, in which both the upper and lower rows are fiber sequences.
\[
\xymatrix{
\prod_{n_i\geq 0}\mathrm{Map}(\Ss^{2n_i+2},\BP{i}) \ar[r] \ar[d]_-{(f_i)_*} &  \mathrm{Map}_{\Ee_2}(X_{n+1},\BP{i}) \ar[r]^-{i_n^*} \ar[d]_-{(f_i)^*}
& \mathrm{Map}_{\Ee_2}(X_n,\BP{i}) \ar[d]^-{(f_i)_*} &\\
\prod_{n_i\geq 0}\mathrm{Map}(\Ss^{2n_i+2},\BP{i-1}) \ar[r] & \mathrm{Map}_{\Ee_2}(X_{n+1},\BP{i-1}) \ar[r]^-{i_n^*} & \mathrm{Map}_{\Ee_2}(X_n,\BP{i-1})}
\]
With $f_i$ chosen as above, the leftmost column induces a surjection on $\pi_0$. A diagram chase then shows that the $\Ee_2$-ring map $X_{n-1}\to \BP{i-1}$ also admits a lift along $f_i$ to $\BP{i}$. Passing to the limit, we obtain the desired lift.
\end{proof}

\begin{rem}
	The same argument implies that there exists an $\Ee_2$-lift of $g_i$ along $f_{i+1}$ for $i\geq 1$. However, note that there is no $\Ee_3$-ring map $g\colon\mathrm{MU}[x_i]\to \BP{i+1}$ such that $\pi_*(g)(x_i)=v_i$. Otherwise, it would induce an $\Ee_2$-ring map $\mathcal{U}^{(1)}_{\mathrm{MU}}(\Ff)\to\mathcal{U}^{(1)}_{\BP{i+1}}(\Ff).$ Hence applying the Dyer--Lashof operation yields that
	\[0=g_*(Q^{p^i}(\sigma x_i))=Q^{p^i}(g_*(\sigma x_i))=Q^{p^i}(\sigma v_i)=\sigma v_{i+1},\]
	by \cite[Lemma 2.4.1]{HW22}, a contradiction.
\end{rem}

Therefore, along the composite $f_{i+1}\circ\cdots\circ f_n$, the map $\bar{v}_i:\Ss[x_i]\to\BP{i}$ can be lifted to a map $\widetilde{v}_i:\Ss[x_i]\to\BP{n}$ such that $\pi_*(\widetilde{v}_i)(x_i)=v_i$ for $1\leq i\leq n-1$. By the Dunn additivity (see \cite[Theorem 5.1.2.2]{HA}), we may view $\BP{n}$ as an $\Ee_1$-algebra in the category of $\Ee_2$-rings. In particular, the multiplication $\mu\colon\BP{n}\otimes\cdots\otimes\BP{n}\to \BP{n}$ is a map of $\Ee_2$-rings. Consequently, we obtain $\Ee_2$-ring maps
\[\bar{\alpha}_J\colon\bigotimes_{j\in J}\Ss[x_j]\xrightarrow{\bigotimes_{j\in J}\widetilde{v}_j} \bigotimes_{j\in J}\BP{n}\xrightarrow{\mu} \BP{n},\]
and 
\[\bar{\beta}_n\colon\Sss{n}\simeq\Sss{n-1}\otimes\Ss[x_n] \xrightarrow{\bar{\alpha}_{\{1,\cdots,n-1\}}\otimes \bar{v}_n} \BP{n}\otimes\BP{n}\xrightarrow{\mu}\BP{n},\]
since the Thom spectrum functor is symmetric monoidal.
\begin{Def}\label{def:logBP}
	The above construction gives rise to three prelog $\Ee_2$-ring structures on $\BP{n}$. We denote them by
	\[(\BP{n},\langle v_n\rangle):=(\BP{n},\xi_{2p^n-2},\bar{v}_n),\]
	\[(\BP{n},\VV{n}):=(\BP{n},\xi_{2p-2,\cdots,2p^n-2},\bar{\beta}_n),\]
	and
	\[(\BP{n},\langle v_j,j\in J\rangle):=(\BP{n},\xi_{\{2p^j-2,\ j\in J\}},\bar{\alpha}_J).\]
\end{Def}

\begin{rem}\label{rem:pi(a)}
	By construction, $\pi_*(\bar{\beta}_n)$ sends the generator corresponding to $x_i$ to $v_i$ for $1\leq i\leq n$. Moreover, by the Dunn additivity, $\BP{n}\otimes_{\Ss[x_n]}\Ss$ is an $\Ee_1$-ring, and there is an equivalence of spectra
	\begin{equation*}
	 \BP{n}\otimes_{\Ss[x_n]}\Ss \simeq \BP{n-1}.
	\end{equation*}
	In other words, the underlying spectrum of the $\Ee_1$-ring $\BP{n}\otimes_{\Ss[x_n]}\Ss$ is $\BP{n-1}$. Indeed, by our choice of $\bar v_n$, the above equivalence is compatible with the $\Ee_2$-ring structure obtained from the $\Ee_3$-ring map $g_n\colon \mathrm{MU}[x_n]\longrightarrow \BP{n}$ by base change along $\mathrm{MU}[x_n]\to\mathrm{MU}$. Consequently, this equivalence is an equivalence of $\Ee_2$-rings.
\end{rem}

\begin{rem}
	The composition of $\Ee_2$-ring maps $\Sss{n-1}\to\BP{n}\xrightarrow{f_n}\BP{n-1}$ defines a prelog $\Ee_2$-ring structure on $\BP{n-1}$. By the construction and Lemma \ref{lem:lift}, this prelog structure is exactly the one in Definition \ref{def:logBP}.
\end{rem}

\begin{rem}
	Lemma \ref{lem:aug} implies that there is a commutative diageram of $\Ee_2$-rings 
	\[
	\xymatrix{
	\Sss{n} \ar[r]^-{\mathrm{id}\otimes\epsilon} \ar[d]_
	{\bar{\beta}_n}
	& \Sss{n-1} \ar[d]^-{\bar{\beta}_{n-1}} &\\
	\BP{n} \ar[r]^-{f_n} &\BP{n-1}.}
	\]
	In other words, there is a map of prelog $\Ee_2$-rings $(\BP{n},\VV{n})\to (\BP{n-1},\VV{n-1})$.
\end{rem}

\section{Log THH of \texorpdfstring{$\BP{n}$}{BP<n>}}\label{sec4}
Let $p$ be a prime. From now on, we assume the existence of a $p$-local Smith--Toda complex $V(n)=\Ss/(p,v_1,\cdots,v_n)$ which admits the structure of a ring spectrum in the homotopy category of spectra. Thus $V(n)\otimes \BP{n}$ is equivalent to $\Ff$ as a $\BP{n}$-module in $\mathrm{Mod}_{\BP{n}}(\mathrm{Sp})$, and the multiplication on $V(n)$ gives $V(n)\otimes \BP{n}$ a ring structure in the homotopy category of spectra. It follows from \cite[Proposition 2.7]{AG+24} that there is an isomorphism of $\Ff$-algebras
\begin{equation}\label{eq:BP}
V(n)_*\THH(\BP{n})
\cong
\THH_*(\BP{n};\Ff)
\cong
E(\lambda_1,\cdots,\lambda_{n+1})\otimes P(\mu_{n+1}),
\end{equation}
with $|\mu_{n+1}|=2p^{n+1}$ and $|\lambda_i|=2p^i-1$ for $1\leq i\leq n+1$. The required ring structures are known for $V(0)$, $V(1)$, $V(2)$, and $V(3)$ in the ranges $p\geq 3$, $p\geq 5$, $p\geq 7$, and $p\geq 11$, respectively; see \cite{Smi70,Toda71,YY77}. In higher heights no general existence theorem is known. For example, Nave \cite[Theorem 1.3]{Nav10} proves that, for $p\geq 7$, the Smith--Toda complex $V((p+1)/2)$ does not exist, and that if $V((p-1)/2)$ exists, then it is not a ring spectrum. Consequently, all computations of $V(n)_*\THH(\BP{n},\VV{n})$ in this section are conditional on the existence of $V(n)$ with a ring spectrum structure in the homotopy category of spectra.

In this section, by abuse of notation, we use the same symbols for the corresponding classes in the spectral sequences.

\begin{rem}
It would be interesting to consider the even filtration on $\THH(\BP{n},\VV{n})$ in the sense of \cite{HRW25,Pst23b}. In this case, the quotient $\mathrm{gr}^*_{\mathrm{ev}}\Ss/(p,v_1,\cdots,v_n)$ exists for all $n$, by \cite[Theorem 6.12]{GIKR22} and \cite[Theorem 1.4 and 4.54]{Pst23a}. This suggests an associated-graded analogue of the present calculation, with coefficients in this quotient object. We hope to return to this question in future work.
\end{rem}

\subsection{The single-generator prelog structure \texorpdfstring{$\langle v_n\rangle$}{<v_n>}}
In this subsection, we adapt the computation of logarithmic topological Hochschild homology for the Adams summand in \cite{RSS18} to $\BP{n}$, with respect to the prelog structure generated by $v_n$. The computation is organized by the following vertical cospan
\begin{equation}\label{eq:cospan}
\begin{tikzcd}[row sep=1.3em]
{\THH(\BP{n})\otimes_{\THH(\Ss[x_n])}\THH(\Ss[v_n^{\pm1}])_{\geq0}}
\arrow[d]
\\
{\THH(\BP{n})\otimes_{\THH(\Ss[x_n])}\THH(\Ss[v_n^{\pm1}])_{=0}}
\\
{\THH(\BP{n})\otimes_{\THH(\Ss[x_n])}\Ss.}
\arrow[u]
\end{tikzcd}
\end{equation}
We first compute the Tor spectral sequence for the bottom term of the cospan. Naturality with respect to the two maps in the cospan then allows us to determine the differentials in the Tor spectral sequence for the top term. This gives the basic one-generator calculation which will be used repeatedly in the iterated repletion argument below.

Since the functor $\THH$ is symmetric monoidal, Remark \ref{rem:pi(a)} implies that the bottom term is equivalent to $\THH(\BP{n-1})$. Moreover, by Equation \eqref{eq:BP} and the $v_n$-Bockstein cofiber sequence defining $V(n)$, we obtain an isomorphism
\begin{equation}\label{eq:bpn-1}
	V(n)_*\THH(\BP{n-1})\cong E(\lambda_1,\cdots,\lambda_n,\epsilon_n)\otimes P(\mu_n)
\end{equation}
with $|\epsilon_n|=2p^n-1,$ $|\mu_n|=2p^n$ and $|\lambda_i|=2p^i-1$ for $1\leq i\leq n.$ Here $\epsilon_n$ denotes the class arising from the $v_n$-Bockstein. Recall that we have fixed an $\Ee_3$-form of $\BP{n}$. Consequently, the canonical map $\BP{n}\to \THH(\BP{n})$ is an $\Ee_2$-ring map which equips $\THH(\BP{n})$ with a left $\BP{n}$-module structure. Hence there is an equivalence of spectra
 \[V(n)\otimes \THH(\BP{n})\simeq (V(n)\otimes \BP{n})\otimes_{\BP{n}}\THH(\BP{n})\simeq \Ff\otimes_{\BP{n}} \THH(\BP{n}).\]
 Moreover, the natural map $f_n\colon\BP{n}\to \BP{n-1}$ is an $\Ee_3$-ring map. Iterating this process yields that $\BP{n}\to \Ff$ is also an $\Ee_3$-ring map. Note that the natural map $\Ff\otimes \THH(\BP{n})\to \Ff\otimes_{\BP{n}} \THH(\BP{n})$ is an $\Ee_1$-ring map. Pre-compositing with the $\Ee_1$-ring map 
 \[\Ff\otimes\THH(\Ss[x_n])\to \Ff\otimes \THH(\BP{n})\]
 gives $\Ff\otimes_{\BP{n}} \THH(\BP{n})$ a left $(\Ff\otimes\THH(\Ss[x_n]))$-module structure.
\begin{lem}\label{lem:ass}
    Let $A$ and $B$ be $\Ee_1$-ring spectra, and let $M$ be a left $A\otimes B$-module. Suppose that we also have $\Ee_1$-ring maps
    \[A\to k,\qquad B\to \Ss.\]
    Then there is a natural equivalence
    \[k\otimes_{k\otimes B}(k\otimes_A M)\simeq k\otimes_A(\Ss\otimes_B M).\]
\end{lem}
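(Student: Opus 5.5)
The plan is to reduce both sides to relative tensor products over the single ring $k\otimes A\otimes B$, or more precisely over $A\otimes B$, and then cancel. To fix module structures: $M$ is a left $A\otimes B$-module; $k$ is a right $A$-module via $A\to k$; $\Ss$ is a right $B$-module via $B\to\Ss$. The left $k\otimes B$-module structure on $k\otimes_A M$ comes from the left $k$-action on the first factor together with the residual $B$-action on $M$. This residual action exists because the $A$- and $B$-actions on $M$ commute, $A\otimes B$ being the tensor product of $\Ee_1$-rings. The right $k\otimes B$-module structure on $k$ is given by $k\otimes B\to k\otimes\Ss\simeq k$. Symmetrically, $\Ss\otimes_B M$ is a left $A$-module.

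The key step is to rewrite both sides as
\[
(k\otimes\Ss)\otimes_{A\otimes B} M,
\]
where $k\otimes\Ss\simeq k$ is viewed as a right $A\otimes B$-module via the external tensor product of the right modules $k_A$ and $\Ss_B$.

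For the right-hand side, I would use the standard associativity/Fubini for bar constructions:
\[
k\otimes_A(\Ss\otimes_B M)\simeq (k\otimes\Ss)\otimes_{A\otimes B}M.
\]
Concretely, write $k\otimes_A(-)$ and $\Ss\otimes_B(-)$ as geometric realizations of simplicial bar constructions $\mathrm{Bar}(k,A,-)$ and $\mathrm{Bar}(\Ss,B,-)$. The iterated realization is the realization of a bisimplicial object. Its diagonal is $\mathrm{Bar}(k\otimes\Ss,A\otimes B,M)$, since $(k\otimes A^{\otimes p}\otimes M)$-type terms combine levelwise via the symmetric monoidal structure. One then applies the diagonal lemma $|X_{\bullet,\bullet}|\simeq|\mathrm{diag}X|$. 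Alternatively, one can use the equivalence $\mathrm{LMod}_{A\otimes B}\simeq\mathrm{LMod}_A(\mathrm{LMod}_B)$ and the fact that relative tensor products compose (HA §4.4).

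For the left-hand side, I would first use cancellation. Since $k$ is a right $k\otimes B$-module obtained by base change,
\[
k\simeq k\otimes_{k\otimes B}(k\otimes B)\otimes_{k\otimes B}k,
\]
and $k\otimes_{k\otimes B}(k\otimes_A M)$ becomes $(k\otimes_{k\otimes B}k)\otimes_k(-)$-type expressions. The cleaner route is to note that $k\otimes_A M\simeq (k\otimes B)\otimes_{A\otimes B} M$ as left $k\otimes B$-modules. This again follows by the bar-construction argument, with $B\otimes_B$ trivial. Then associativity of relative tensor products gives
\[
k\otimes_{k\otimes B}\bigl((k\otimes B)\otimes_{A\otimes B}M\bigr)\simeq \bigl(k\otimes_{k\otimes B}(k\otimes B)\bigr)\otimes_{A\otimes B}M\simeq k\otimes_{A\otimes B}M.
\]
The last step requires checking that the right $A\otimes B$-module structure on $k$ obtained this way is the one coming from $A\to k$ and $B\to\Ss$, which holds by construction. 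Naturality in $M$ is clear, since every step is functorial.

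The main obstacle is bookkeeping the bimodule structures coherently in the $\infty$-categorical setting. Specifically, one must justify that $k\otimes_A M$ is a $(k\otimes B)$-module, which requires $k$ to be a $(k,A)$-bimodule, and identify $k\otimes_A M\simeq(k\otimes B)\otimes_{A\otimes B}M$ as $(k\otimes B)$-modules. I would handle this by working in $\mathrm{BMod}$ and using HA Theorem 4.3.2.7 together with the compatibility of relative tensor products with external tensor products in a symmetric monoidal $\infty$-category. Everything else is formal.
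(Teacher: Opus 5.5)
Your proposal is correct and is essentially the paper's argument: both sides are identified with $k\otimes_{A\otimes B}M$. The left-hand side goes via $k\otimes_A M\simeq (k\otimes B)\otimes_{A\otimes B}M$ followed by cancellation, and the right-hand side via associativity, where the paper writes $k\simeq k\otimes_A A$ and uses $A\otimes_{A\otimes B}M\simeq \Ss\otimes_B M$ in place of your bisimplicial Fubini step. The aside you discard, $k\simeq k\otimes_{k\otimes B}(k\otimes B)\otimes_{k\otimes B}k$, is false in general (the right side is $k\otimes_{k\otimes B}k\simeq k\otimes(\Ss\otimes_B\Ss)$), but nothing in your final argument depends on it.
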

\begin{proof}
    Since $M$ is a left $A\otimes B$-modules, $k\otimes_A M$ is a left $k\otimes B$-modules and $\Ss\otimes_B M$ is a left $A$-module. Hence both sides of the equivalence are well-defined. By the associativity of tensor product, there is an equivalence
    \begin{equation*}
        \begin{split}
            k\otimes_{k\otimes B}(k\otimes_A M) & \simeq k\otimes_{k\otimes B}[(k\otimes B)\otimes_{A\otimes B}M]\\
            & \simeq k\otimes_{A\otimes B}M \\
            & \simeq (k\otimes_A A)\otimes_{A\otimes B}M\\
            & \simeq k\otimes_A(A\otimes_{A\otimes B} M)\\
            & \simeq k\otimes_A(\Ss\otimes_B M).\qedhere
        \end{split}
    \end{equation*}
\end{proof}

\begin{prop}\label{prop:eqV(n)}
    There is an equivalence of spectra
    \begin{equation}\label{eq:bot}
          \Ff\otimes_{\Ff\otimes\THH(\Ss[x_n])}(\Ff\otimes_{\BP{n}} \THH(\BP{n}))\simeq V(n)\otimes \THH(\BP{n-1}).  
    \end{equation}
\end{prop}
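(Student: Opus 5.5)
The plan is to apply Lemma \ref{lem:ass} with $k=\Ff$, $A=\THH(\BP{n})$ (or rather its relevant $\Ee_1$-structure through $\BP{n}$), $B=\THH(\Ss[x_n])$, and $M=\THH(\BP{n})$, and then identify the inner term $\Ss\otimes_{\THH(\Ss[x_n])}\THH(\BP{n})$ with $\THH(\BP{n-1})$.

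First I set up the module structures. The $\Ee_1$-map $\THH(\Ss[x_n])\to\THH(\BP{n})$ comes from applying the symmetric monoidal functor $\THH$ to the $\Ee_2$-map $\bar v_n$. The map $\BP{n}\to\THH(\BP{n})$ is $\Ee_2$, so these two actions on $M=\THH(\BP{n})$ commute. This makes $M$ a left $\BP{n}\otimes\THH(\Ss[x_n])$-module, via the multiplication on the $\Ee_2$-ring $\THH(\BP{n})$, or equivalently on the $\Ee_1$-ring since $\THH$ of an $\Ee_3$-ring is $\Ee_2$. The other inputs are the $\Ee_1$-maps $A=\BP{n}\to\Ff$ (the composite of the $f_i$ followed by the truncation to $\Ff$) and $B=\THH(\Ss[x_n])\to\THH(\Ss)\simeq\Ss$, induced by the augmentation $\epsilon$ of Lemma \ref{lem:aug}. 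I must check that the $\Ff\otimes\THH(\Ss[x_n])$-module structure on $\Ff\otimes_{\BP{n}}\THH(\BP{n})$ used in the statement (defined by precomposition through $\Ff\otimes\THH(\BP{n})$) agrees with the one produced in Lemma \ref{lem:ass}. I expect this to hold because both come from the same commuting actions on $M$.

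Next, Lemma \ref{lem:ass} gives
\[\Ff\otimes_{\Ff\otimes\THH(\Ss[x_n])}(\Ff\otimes_{\BP{n}}\THH(\BP{n}))\simeq \Ff\otimes_{\BP{n}}\bigl(\Ss\otimes_{\THH(\Ss[x_n])}\THH(\BP{n})\bigr).\]
Since $\THH$ is symmetric monoidal and preserves geometric realizations (relative tensor products are bar constructions), the inner term is $\THH(\BP{n}\otimes_{\Ss[x_n]}\Ss)$. By Remark \ref{rem:pi(a)} this is equivalent to $\THH(\BP{n-1})$, as an $\Ee_2$-ring under $\BP{n}$ via $f_n$. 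Here I use the compatibility in that remark coming from the choice of $\bar v_n$ via $g_n$.

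Finally, $\Ff\otimes_{\BP{n}}\THH(\BP{n-1})\simeq (V(n)\otimes\BP{n})\otimes_{\BP{n}}\THH(\BP{n-1})\simeq V(n)\otimes\THH(\BP{n-1})$, using $V(n)\otimes\BP{n}\simeq\Ff$ as $\BP{n}$-modules, exactly as in the analogous identification for $\THH(\BP{n})$ displayed before Lemma \ref{lem:ass}.

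The main obstacle is the bookkeeping of module structures. Two points need care: that the $\BP{n}$-module structure on $\THH(\BP{n-1})$ obtained after base change agrees with the one restricted along $f_n$, and that the $\BP{n}$- and $\THH(\Ss[x_n])$-actions on $\THH(\BP{n})$ genuinely commute, so that $M$ is a module over the tensor product. For the latter I would use Dunn additivity, as in Section \ref{sec3}, viewing $\THH(\BP{n})$ as an $\Ee_1$-algebra in $\Ee_1$-rings. Since the statement is only an equivalence of spectra, no multiplicative compatibility beyond this is required.
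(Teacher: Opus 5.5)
Your proposal is correct and follows the paper's proof. The paper also applies Lemma~\ref{lem:ass} with $(A,B,M,k)=(\BP{n},\THH(\Ss[x_n]),\THH(\BP{n}),\Ff)$, then uses Remark~\ref{rem:pi(a)} to identify $\Ss\otimes_{\THH(\Ss[x_n])}\THH(\BP{n})$ with $\THH(\BP{n-1})$, and finally uses $\Ff\simeq V(n)\otimes\BP{n}$.

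The differences are cosmetic. You build the $\BP{n}\otimes\THH(\Ss[x_n])$-module structure on $\THH(\BP{n})$ from the $\Ee_2$-multiplication of $\THH(\BP{n})$. The paper instead applies $\THH$ to a left $\Ss[x_n]\otimes\BP{n}$-module structure on $\BP{n}$ in $\Ee_1$-rings, then restricts along $\BP{n}\to\THH(\BP{n})$. Also, $\Ff\otimes_{\BP{n}}N\simeq V(n)\otimes N$ depends only on the underlying spectrum of $N$. So your concern about matching $\BP{n}$-module structures on $\THH(\BP{n-1})$ in the last step is unnecessary for this equivalence of spectra.
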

\begin{proof}
    Since the map $\Ss[x_n]\to\BP{n}$ is a $\Ee_2$-ring map and $\BP{n}$ is an $\Ee_3$-ring, the map 
    \[\Ss[x_n]\otimes\BP{n}\xrightarrow{\bar{\beta}_n\otimes\mathrm{id}}\BP{n}\otimes\BP{n}\to\BP{n}\]
    is an $\Ee_2$-ring map, where the second map is the multiplication of $\BP{n}$. By restriction of scalars along this map, we see that $\BP{n}$ admits a left $\Ss[x_n]\otimes \BP{n}$-module structure in the category of $\Ee_1$-rings. Applying $\THH$ to this structure, we obtain a left $\THH(\Ss[x_n])\otimes\THH(\BP{n})$-module structure on $\THH(\BP{n})$. Restricting the left $\THH(\BP{n})$-action along the $\Ee_1$-ring map $\BP{n}\to\THH(\BP{n})$ gives a left $\THH(\Ss[x_n])\otimes\BP{n}$-module structure on $\THH(\BP{n})$. 
    Let $(A,B,M,k)$ be $(\BP{n},\THH(\Ss[x_n]),\THH(\BP{n}),\Ff)$ in Lemma \ref{lem:ass}. We obtain that
    \[\Ff\otimes_{\Ff\otimes\THH(\Ss[x_n])}(\Ff\otimes_{\BP{n}} \THH(\BP{n}))\simeq \Ff\otimes_{\BP{n}}(\Ss\otimes_{\THH(\Ss[x_n])}\THH(\BP{n})).\]
    By Remark \ref{rem:pi(a)}, the right term is equivalent to 
    \begin{equation*}
        \begin{split}
            \Ff\otimes_{\BP{n}}\THH(\BP{n-1}) &\simeq (V(n)\otimes \BP{n})\otimes_{\BP{n}}\THH(\BP{n-1})\\
            & \simeq V(n)\otimes\THH(\BP{n-1}).\qedhere
        \end{split}
    \end{equation*}
\end{proof}
Now, the equivalence \eqref{eq:bot} gives a spectral sequence
\[\EE{\BP{n-1}}=\mathrm{Tor}_{*,*}^{H_*(\THH_*(\Ss[x_n]);\Ff)}(\Ff,V(n)_* \THH(\BP{n}))\Longrightarrow V(n)_*\THH(\BP{n-1}).\]

\begin{prop}\label{prop:n-1case}
	The above spectral sequence is multiplicative. In particular, the differentials satisfy the Leibniz rule.
\end{prop}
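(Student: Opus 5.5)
To prove multiplicativity, I will realize the Tor spectral sequence as the Künneth spectral sequence of a relative tensor product of $\Ee_1$-rings, rather than only of modules. Concretely, I want to exhibit $\Ff\otimes_{\Ff\otimes\THH(\Ss[x_n])}(\Ff\otimes_{\BP{n}}\THH(\BP{n}))$ as the relative tensor product $R\otimes_{Q}P$ in which $Q\to P$ and $Q\to R$ are maps of $\Ee_2$-rings (or at least $\Ee_1$-maps with $Q$ an $\Ee_2$-ring acting centrally). By the standard multiplicativity of the Künneth/Tor spectral sequence (e.g. \cite[\S IV.6]{EKMM}-type arguments, or Lurie's treatment of the bar spectral sequence for $\Ee_2$-algebras), the spectral sequence is then one of algebras and the differentials are derivations.

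\textbf{Step 1 (upgrade the ring structures).} Since $\BP{n}$ is $\Ee_3$, $\THH(\BP{n})$ is an $\Ee_2$-ring. Since $\bar v_n\colon\Ss[x_n]\to\BP{n}$ is $\Ee_2$, $\THH(\Ss[x_n])\to\THH(\BP{n})$ is an $\Ee_1$-map, which is insufficient. Instead I will use the $\Ee_3$-map $g_n\colon \mathrm{MU}[x_n]\to\BP{n}$ fixed in Section~\ref{sec3}, which gives an $\Ee_2$-map $\THH(\mathrm{MU}[x_n])\to\THH(\BP{n})$. The maps $\BP{n}\to\Ff$ are $\Ee_3$, so $\Ff\otimes_{\BP{n}}\THH(\BP{n})$ and $\Ff\otimes_{\mathrm{MU}}\THH(\mathrm{MU}[x_n])$ are $\Ee_1$-rings receiving compatible maps. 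Using $\mathrm{MU}[x_n]\simeq\Ss[x_n]\otimes\mathrm{MU}$ (Lemma~\ref{lem:MU and S}), I will identify $\Ff\otimes\THH(\Ss[x_n])$ with the part $\Ff\otimes_{\THH(\mathrm{MU})}\THH(\mathrm{MU}[x_n])$ mapping through $\mathrm{MU}$-linearly. Then, as in Remark~\ref{rem:pi(a)}, the base change along $\mathrm{MU}[x_n]\to\mathrm{MU}$ is compatible with $\BP{n}\to\BP{n-1}$, and the augmentation $\Ff\otimes\THH(\Ss[x_n])\to\Ff$ is induced by the $\Ee_2$ augmentation $\epsilon$ of Lemma~\ref{lem:aug}.

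\textbf{Step 2 (identify the $E^2$-page as an algebra).} By Proposition~\ref{prop:S[x]}, $H_*(\THH(\Ss[x_n]);\Ff)\cong E(dx_n)\otimes P(x_n)$. The map to $V(n)_*\THH(\BP{n})$ sends $x_n\mapsto v_n=0$ and $dx_n\mapsto\sigma v_n$, which equals $\lambda_{n+1}$ up to unit and decomposables. A flatness argument then gives
\[E^2\cong E(\lambda_1,\dots,\lambda_n)\otimes P(\mu_{n+1})\otimes \Gamma(\sigma x_n)\otimes E(\sigma dx_n)\]
as algebras, because the Tor of commutative algebras has its shuffle product and this agrees with the product coming from Step 1 on $E^2$.

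\textbf{Main obstacle.} I expect the main obstacle to be Step 1, namely producing genuinely multiplicative input while $\bar v_n$ is only $\Ee_2$. The fallback is to realize the spectral sequence as the filtration spectral sequence of the two-sided bar construction $B(\Ff,\THH(\Ss[x_n]),-)$. This is a simplicial object in $\Ee_1$-rings whenever the action map is $\Ee_1$ and central. By Dunn additivity, this holds because the action comes from the $\Ee_2$ module structure of $\BP{n}$ over $\Ss[x_n]\otimes\BP{n}$ used in Proposition~\ref{prop:eqV(n)}. The skeletal filtration of a simplicial $\Ee_1$-ring's realization is then multiplicative, which yields the Leibniz rule.
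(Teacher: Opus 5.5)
Your overall strategy and your key input match the paper's. The paper also realizes the spectral sequence as the Künneth spectral sequence of a relative tensor product over a span of $\Ee_2$-rings, and it also gets the extra structure from the $\Ee_3$-map $g_n\colon\mathrm{MU}[x_n]\to\BP{n}$. But Step 1 never produces such a span, and what it does specify is insufficient.

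\textbf{Step 1.} You keep the base $Q=\Ff\otimes\THH(\Ss[x_n])$ and $R=\Ff$, with $Q\to R$ induced by the $\Ee_2$-augmentation $\epsilon$ of Lemma~\ref{lem:aug}. As you note yourself, this is only an $\Ee_1$-map of $\Ee_1$-rings.

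Rewriting $Q$ as $\Ff\otimes_{\THH(\mathrm{MU})}\THH(\mathrm{MU}[x_n])$ does not repair this. The module $P=\Ff\otimes_{\BP{n}}\THH(\BP{n})$ is not a module over that ring, because the composite $\THH(\mathrm{MU})\to\THH(\BP{n})\to P$ does not factor through $\Ff$. On homotopy its image contains $\lambda_1$, the suspension of the image of a generator of $H_*(\mathrm{MU};\Ff)$.

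The missing step is to \emph{change} the base rather than disguise it. Since $\THH(\mathrm{MU}[x_n])\simeq\THH(\mathrm{MU})\otimes\THH(\Ss[x_n])$, Lemma~\ref{lem:ass} gives
\[\Ff\otimes_{\Ff\otimes\THH(\Ss[x_n])}P\simeq(\Ff\otimes\THH(\mathrm{MU}))\otimes_{\Ff\otimes\THH(\mathrm{MU}[x_n])}P.\]
So one takes $Q=\Ff\otimes\THH(\mathrm{MU}[x_n])$ and $R=\Ff\otimes\THH(\mathrm{MU})$, with $Q\to R$ induced by the $\Ee_\infty$-augmentation $\mathrm{MU}[x_n]\to\mathrm{MU}$ rather than by $\epsilon$.

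You also need $P$ to be an $\Ee_2$-ring, not merely an $\Ee_1$-ring as you state; otherwise ``$Q\to P$ is $\Ee_2$'' has no meaning and centrality is left unverified. The paper obtains the $\Ee_2$-structure by extending scalars of $\THH(\BP{n})$ along the $\Ee_3$-map $\BP{n}\to\Ff$. Then $Q\to P$ is $\Ee_2$, as the composite of $\Ff\otimes\THH(g_n)$ with the adjoint of the unit $\THH(\BP{n})\to P$.

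\textbf{Fallback.} It fails for exactly the reason you flagged in Step 1. The bar construction $B(\Ff,\THH(\Ss[x_n]),-)$ is a simplicial $\Ee_1$-ring only if $\THH(\Ss[x_n])$ is $\Ee_2$ and acts centrally. Since $\Ss[x_n]$ is only $\Ee_2$, $\THH(\Ss[x_n])$ is only $\Ee_1$. Applying $\THH$ to the structure in Proposition~\ref{prop:eqV(n)} therefore gives a module structure on spectra and nothing more.

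\textbf{Step 2.} It is not needed for this proposition, but it is wrong and would derail Corollary~\ref{cor:bpn-1}. First, $|dx_n|=2p^n-1\neq 2p^{n+1}-1=|\lambda_{n+1}|$. Second, $dx_n$ maps to the suspension of the mod $p$ Hurewicz image of $v_n$, which is zero. Hence the action is trivial and
\[E^2\cong E(\lambda_1,\dots,\lambda_{n+1})\otimes P(\mu_{n+1})\otimes E([x_n])\otimes\Gamma([dx_n]).\]
You have also swapped the exterior and divided-power factors. Finally, $\lambda_{n+1}$ is removed only later, by $d^p(\gamma_p([dx_n]))\doteq\lambda_{n+1}$.
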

\begin{proof}
    By our choice of $\bar{v}_n$ and Lemma \ref{lem:ass}, there is an equivalence of spectra
    \begin{equation*}
        \begin{split}            &\Ff\otimes_{\Ff\otimes\THH(\Ss[x_n])}\Ff\otimes_{\BP{n}}\THH(\BP{n})\\
        & \simeq(\Ff\otimes \THH(\mathrm{MU}))\otimes_{\Ff\otimes\THH(\mathrm{MU}[x_n])}\Ff\otimes_{\BP{n}}\THH(\BP{n})\\
        & \simeq \Ff\otimes_{\BP{n}}(\THH(\mathrm{MU})\otimes_{\THH(\mathrm{MU}[x_n])}\THH(\BP{n})).
        \end{split}
    \end{equation*}
    An argument similar to the proof of Lemma \ref{lem:aug} gives an $\Ee_{\infty}$-augmentation $\mathrm{MU}[x_n]\to \mathrm{MU}.$ Since 
\[\THH(\BP{n})\otimes_{\THH(\mathrm{MU}[x_n])}\THH(\mathrm{MU})\simeq\THH(\BP{n}\otimes_{\mathrm{MU}[x_n]}\mathrm{MU})\simeq \THH(\BP{n-1})\]
as left-$\BP{n}$-modules, it suffices to prove that both maps in the span 
\[\Ff\otimes \THH(\mathrm{MU})\leftarrow\Ff\otimes\THH(\mathrm{MU}[x_n])\to \Ff\otimes_{\BP{n}}\THH(\BP{n})\]
are $\Ee_2$-ring maps. By definition, the map on the left is an $\Ee_\infty$-ring map. Since $\Ff\otimes_{\BP{n}} \THH(\BP{n})$ is obtained from the $\Ee_2$-$\BP{n}$-algebra $\THH(\BP{n})$ by extension of scalars along the $\Ee_3$-ring map $\BP{n} \to \Ff$, it inherits a natural $\Ee_2$-ring structure. Moreover, the unit map
\[
\THH(\BP{n})
\to
\Ff\otimes_{\BP{n}} \THH(\BP{n})
\]
is a map of $\Ee_2$-$\BP{n}$-algebras, and hence, in particular, an $\Ee_2$-ring map. Consequently, its adjoint 
\[\Ff\otimes \THH(\BP{n})\to\Ff\otimes_{\BP{n}} \THH(\BP{n})\]
is also an $\Ee_2$-ring map. Precomposing this map with $\Ff\otimes \THH(\mathrm{MU}[x_n])\to\Ff\otimes \THH(\BP{n})$, we obtain the desired $\Ee_2$-map on the left.
\end{proof}

\begin{cor}\label{cor:bpn-1}
	The $E_2$-page of $E^r_{*,*}({\BP{n-1}})$ is isomorphic to
	\[\EE{\BP{n-1}} \cong E(\lambda_1,\cdots,\lambda_{n+1})\otimes P(\mu_{n+1})\otimes E([x_n])\otimes\Gamma([dx_n]),\]
	where $\lambda_i$ has bidegree $(0,2p^i-1)$ for $1\leq i\leq n+1$, $\mu_{n+1}$ has bidegree $(0,2p^{n+1})$, $[x_n]$ has bidegree $(1,2p^n-2)$, and $[dx_j]$ has bidegree $(1,2p^n-1)$. There are non-trivial $d^p$-differentials
	\[d^p(\gamma_k([dx_n])\doteq \lambda_{n+1}\cdot\gamma_{k-p}([dx_n])\]
	for $k\geq p$, leaving
	\[E^{\infty}_{*,*}(\BP{n-1})\cong E(\lambda_1,\cdots,\lambda_n)\otimes P(\mu_{n+1})\otimes E([x_n]) \otimes P_p([dx_n]).\]
	Hence $[x_n]$ represents $\epsilon_n$ modulo $\lambda_n$, $[dx_n]$ represents $\mu_n$, and $\mu_{n+1}$ represents $\mu_n^p$. Moreover, there is a multiplicative extension $[dx_n]^p=\mu_{n+1}$.
\end{cor}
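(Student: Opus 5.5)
We compute the $E^2$-page with a Koszul resolution, then determine the differentials by comparing the total dimension in each degree with the known abutment \eqref{eq:bpn-1}.

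\textbf{The $E^2$-page.} By Proposition \ref{prop:S[x]}, $H_*(\THH(\Ss[x_n]);\Ff)\cong E(dx_n)\otimes P(x_n)$ with $|x_n|=2p^n-2$. Under $\THH(\Ss[x_n])\to\THH(\BP{n})\to \Ff\otimes_{\BP{n}}\THH(\BP{n})$, the class $x_n$ maps to the image of $v_n$, which is zero in $V(n)_*$. The class $dx_n$ maps to $\sigma v_n$, and this is zero in $V(n)_*\THH(\BP{n})$ because that ring is concentrated in the degrees of \eqref{eq:BP}, which contain no nonzero class of degree $2p^n-1$ besides multiples of $\lambda_n$. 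If necessary, we argue that $\sigma v_n$ vanishes in mod $(p,v_1,\dots,v_{n-1})$ homology, using the computation in \cite{AG+24}. So $V(n)_*\THH(\BP{n})$ is a module over $E(dx_n)\otimes P(x_n)$ through the augmentation. Hence the Tor groups are $V(n)_*\THH(\BP{n})\otimes\mathrm{Tor}^{E(dx_n)\otimes P(x_n)}(\Ff,\Ff)$. The Koszul resolution gives $\mathrm{Tor}=E([x_n])\otimes\Gamma([dx_n])$, with the stated bidegrees. Proposition \ref{prop:n-1case} makes this an isomorphism of algebras.

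\textbf{Differentials by counting.} The classes $\lambda_i$, $\mu_{n+1}$ and $[x_n]$ lie in filtrations $0$ and $1$. The $\lambda_i$ and $\mu_{n+1}$ are permanent cycles because they come from the edge homomorphism out of $V(n)_*\THH(\BP{n})$. The class $[x_n]$ is a permanent cycle for filtration reasons: a $d^r$ with $r\ge2$ lands in negative filtration. The abutment \eqref{eq:bpn-1} has Poincaré series
\[\prod_{i\le n}(1+t^{2p^i-1})\cdot(1+t^{2p^n-1})/(1-t^{2p^n}),\]
while $E^2$ has the extra factor $(1+t^{2p^{n+1}-1})/(1-t^{2p^{n+1}})$ multiplied by $\Gamma$ in place of $P(\mu_n)$. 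Since the abutment is strictly smaller, some differentials must be nonzero. The first possible source is $\gamma_p([dx_n])$, in total degree $p(2p^n-1)=2p^{n+1}-p$, and its only possible target is $\lambda_{n+1}$, in degree $2p^{n+1}-1$. The degrees match with $r=p$, since the total degree drops by one and the filtration drops by $p$.

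To show $d^p(\gamma_p[dx_n])\doteq\lambda_{n+1}$, we use that $\lambda_{n+1}$ must die: the abutment has rank $0$ in degree $2p^{n+1}-1$ beyond the classes accounted for by $E(\lambda_1,\dots,\lambda_n,\epsilon_n)\otimes P(\mu_n)$. Finishing this check means verifying that $\mu_n^{p-1}\epsilon_n$ and $\lambda_{n+1}$ are the only classes in that degree and that the counts force one of them to be hit. This is the degree bookkeeping I expect to be the main obstacle. The Leibniz rule together with divided power structure then gives $d^p(\gamma_k)\doteq\lambda_{n+1}\gamma_{k-p}$: write $\gamma_k$ in terms of $\gamma_{p^j}$, and use that $d^p$ vanishes on $\gamma_{j}$ for $j<p$ by sparseness. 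Computing homology gives the $E^{p+1}$-page stated, with $\Gamma$ truncated to $P_p([dx_n])$ and $\lambda_{n+1}$ removed. A Poincaré series comparison with the abutment then shows that it equals $E^\infty$.

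\textbf{Identifications and extension.} Compare degrees. $[x_n]$ sits in degree $2p^n-1$ and must represent $\epsilon_n$ modulo the other class $\lambda_n$ of that degree. $[dx_n]$ sits in degree $2p^n$, so it represents $\mu_n$, up to a unit, and the map to $\THH(\BP{n-1})$ sends $\mu_{n+1}$ to $\mu_n^p$, as in the classical $\THH(\BP{n})\to\THH(\BP{n-1})$ computation. Since $\mu_n^p\ne0$ in the abutment \eqref{eq:bpn-1} but $[dx_n]^p=0$ in $E^\infty$, multiplicativity forces the hidden extension $[dx_n]^p=\mu_{n+1}$, up to a unit.
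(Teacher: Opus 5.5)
Your overall route is the paper's. You get $E^2$ from the trivial action of $E(dx_n)\otimes P(x_n)$ and the Koszul complex, force $d^p(\gamma_p([dx_n]))\doteq\lambda_{n+1}$ by a dimension comparison with \eqref{eq:bpn-1}, propagate to all $\gamma_k([dx_n])$, and read the extension off the ring structure of $V(n)_*\THH(\BP{n-1})$.

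\textbf{The propagation step is a genuine gap.} In $\Gamma([dx_n])\cong\bigotimes_{j\geq0}P_p(\gamma_{p^j}([dx_n]))$ the classes $\gamma_{p^j}([dx_n])$ with $j\geq2$ are indecomposable. So the Leibniz rule, together with $d^p(\gamma_p)\doteq\lambda_{n+1}$ and $d^p(\gamma_j)=0$ for $j<p$, only determines $d^p(\gamma_k)$ for $k<p^2$. Nothing in your argument excludes $d^p(\gamma_{p^2}([dx_n]))=0$. That choice is fully consistent with Leibniz (note $\gamma_p^p=0$ in $\Gamma$), and then $\gamma_{p^2}([dx_n])$ and $\lambda_{n+1}\gamma_{p^2-p}([dx_n])$ would survive to $E^{p+1}$. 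Hence your description of $E^{p+1}$, and the closing Poincar\'e-series check built on it, is unjustified.

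What is missing is the argument the paper gives. The classes $\lambda_{n+1}\gamma_\ell([dx_n])$ must die, by comparing dimensions with \eqref{eq:bpn-1} degree by degree. Since $d^r(\gamma_k([dx_n]))$ has bidegree $(k-r,\,2kp^n-k+r-1)$, matching it with the bidegree of $\lambda_{n+1}\gamma_\ell([dx_n])$ forces $(k,r)=(\ell+p,p)$. This pins down $d^p$ on every $\gamma_{p^j}([dx_n])$ directly, without any appeal to Leibniz.

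\textbf{The count you call the main obstacle is unfinished.} As phrased, it also conflates $E^2$-classes with abutment classes: $\lambda_{n+1}$ is not a class of $E(\lambda_1,\dots,\lambda_n,\epsilon_n)\otimes P(\mu_n)$. The paper's count runs as follows.
\begin{itemize}
\item In total degree $2p^{n+1}-1$ the $E^2$-page has exactly $\lambda_{n+1}$, $\lambda_n\gamma_{p-1}([dx_n])$ and $[x_n]\gamma_{p-1}([dx_n])$. The abutment has only the two classes $\lambda_n\mu_n^{p-1}$ and $\epsilon_n\mu_n^{p-1}$.
\item A differential hitting the second or third class would have to start at $\gamma_p([dx_n])$, since $\mu_{n+1}$ sits in filtration $0$. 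It would then be a $d^1$, respectively a $d^0$, which is impossible.
\item Hence $\lambda_{n+1}$ is the boundary.
\end{itemize}
Equivalently, you can count one degree up. Note that $\gamma_p([dx_n])$ has total degree $2p^{n+1}$; the number $2p^{n+1}-p$ you wrote is its internal degree. In total degree $2p^{n+1}$, $E^2$ has $\mu_{n+1}$ and $\gamma_p([dx_n])$ against the single abutment class $\mu_n^p$. Total degree $2p^{n+1}+1$ of $E^2$ is empty, so $\gamma_p([dx_n])$ must support a differential, and its only possible target is $\lambda_{n+1}$.

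\textbf{The vanishing of $dx_n$ needs a different reason.} Your degree argument for $dx_n\mapsto0$ fails precisely because $\lambda_n$ lives in degree $2p^n-1$. The correct reason is naturality of $\sigma$: $dx_n=\sigma x_n$ maps to $\sigma$ of the mod $p$ Hurewicz image of $v_n$, and that image is zero.
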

\begin{proof}
	The proof proceeds similarly to the proof of \cite[Proposition 7.2]{RSS18}. First, we identify the $E^2$-page. By Proposition \ref{prop:S[x]} and Equation \eqref{eq:bpn-1}, the algebra map
\begin{equation*}
\begin{tikzcd}[row sep=1.3em]
H_*(\THH(\Ss[x_n]);\Ff)\cong E(dx_n)\otimes P(x_n)
\arrow[d]\\
\pi_*(\Ff\otimes_{\BP{n}}\THH(\BP{n}))\cong E(\lambda_1,\cdots,\lambda_{n+1})\otimes P(\mu_{n+1})
\end{tikzcd}
\end{equation*}
sends $x_n$ to zero for degree reasons. Since the map $\THH(\Ss[x_n])\to \THH(\BP{n})$ is obtained by applying $\THH$ to the $\Ee_2$-ring map $\bar{v}_n$, it also sends $dx_n$ to zero by compatibility of the suspension operation with the induced map on $\THH$. Consequently, the $E_2$-page can be identified with
\begin{equation*}
\begin{split}
    \EE{\BP{n-1}} & =\mathrm{Tor}^{H_*(\THH(\Ss[x_n]);\Ff)}(\Ff,V(n)_*\THH(\BP{n}))\\
    & \cong E(\lambda_1,\cdots,\lambda_{n+1})\otimes P(\mu_{n+1})\otimes E([x_n])\otimes\Gamma([dx_n]),
\end{split} 
\end{equation*}
with the bidegrees of the generators as stated in the corollary. 

For degree reasons, the nonzero differentials are determined by the differentials on $\gamma_k([dx_n])$ for $k\geq p$, together with the Leibniz rule. Note that the bidegree of $d^r(\gamma_k([dx_n])$ is $(k-r,\ 2kp^n-k+r-1)$, for $r\geq2$ and $k\geq p$. In total degree $2p^{n+1}-1$, the $E^2$-page is generated by the three classes $\lambda_{n+1}$, $\lambda_n\cdot\gamma_{p-1}([dx_n])$ and $[x_n]\cdot\gamma_{p-1}([dx_n])$ whose bidegrees are $(0,\ 2p^{n+1}-1)$, $(p-1,\ 2p^{n+1}-p)$ and $(p,\ 2p^{n+1}-p-1)$, respectively. Since the dimension of $\mathrm{V}(n)_{2p^{n+1}-1}\THH(\BP{n-1})$ is $2$, one of these classes must be a boundary. If $\lambda_n\cdot\gamma_{p-1}([dx_n])$ were hit by a differential, then we would have
\[(p-1,\ 2p^{n+1}-p)=(k-r,\ 2kp^n-k+r-1)\]
for some $r\geq2$ and $k\geq p$. This forces $r=1$, a contradiction. Hence $\lambda_n\cdot\gamma_{p-1}([dx_n])$ cannot be hit by any differential. The same argument applies to $[x_n]\cdot\gamma_{p-1}([dx_n])$. Therefore, we deduce that
\[d^p(\gamma_p(x_n))\doteq\lambda_{n+1}.\]
Therefore, for each $1\leq i\leq n$, $[x_n]$, $[dx_n]$, $\mu_{n+1}$ and $\lambda_i$ are all permanent cycles and represent $\epsilon_n$ (modulo $\lambda_n$), $\mu_n$, $\mu_n^p$ and $\lambda_i$, respectively. As the spectral sequence is multiplicative, each generating monomial in $E(\lambda_1,\cdots,\lambda_n)\otimes P(\mu_{n+1})\otimes E([x_n]) \otimes P_p([dx_n])$ is also a permanent cycle which represents a non-zero product in $V(n)_*\THH(\BP{n-1})$. 

Again, for degree reasons, $\gamma_{k}([dx_n])$ cannot be a cycle for $k> p$. Hence the classes $\gamma_{k}([dx_n])$ are forced to be boundaries, which in turn kill the classes $\lambda_{n+1}\cdot\gamma_{\ell}([dx_n])$ for $\ell\geq 1$, by the discussion in the preceding paragraph. Now assume that $\lambda_{n+1}\cdot\gamma_{\ell}([dx_n])\doteq d^r(\gamma_k([dx_n])$. Then
\[(\ell,\ 2p^{n+1}-1+2\ell p^n-\ell)=(k-r,\ 2kp^n-k+r-1).\]
Hence $(k,r)=(\ell+p,p)$, and therefore $d^p(\gamma_{\ell+p}[dx_n])=\lambda_{n+1}\cdot\gamma_{\ell}([dx_n])$ for $\ell\geq 1$. 

The multiplicative extension is resolved by the above computation together with the multiplicative structure of $V(n)_*\THH(\BP{n-1}).$
\end{proof}

Now, we can prove our first main theorem.

\begin{thm}\label{thm:main}
Suppose that the Smith--Toda complex $V(n)$ exists as a ring spectrum. Then there is an isomorphism of $\Ff$-algebras
\[V(n)_*\THH(\BP{n},\langle v_n\rangle)\cong E(\lambda_1,\cdots,\lambda_n,d\mathrm{log}x_n)\otimes P(\kappa_n)\]
with $|dlogx_n|=1,$ $|\kappa_n|=2p^n$, and $|\lambda_i|=2p^i-1$ for $1\leq i\leq n$.
\end{thm}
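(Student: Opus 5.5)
Following the strategy of \cite[Theorem 7.5]{RSS18}, I would run the Tor spectral sequence for the top term of the cospan \eqref{eq:cospan}:
\[
E^2_{*,*}(\log)=\mathrm{Tor}^{H_*(\THH(\Ss[x_n]);\Ff)}_{*,*}\bigl(H_*(\THH(\Ss[v_n^{\pm1}])_{\geq0};\Ff),\,V(n)_*\THH(\BP{n})\bigr)\Longrightarrow V(n)_*\THH(\BP{n},\langle v_n\rangle).
\]
I would also run the analogous sequence for the middle term, with coefficients $H_*(\THH(\Ss[v_n^{\pm1}])_{=0};\Ff)$. By the definition of log THH together with Lemma \ref{lem:ass}, exactly as in Proposition \ref{prop:eqV(n)}, these abut to $V(n)$-homotopy. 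Multiplicativity would be checked as in Proposition \ref{prop:n-1case}. The map $\Ss[x_n]\to\Ss[v_n^{\pm1}]$ is a map of graded $\Ee_2$-rings, so $\THH(\Ss[x_n])\to\THH(\Ss[v_n^{\pm1}])_{\geq0}$ is $\Ee_1$. Transferring to $\mathrm{MU}$-coefficients via the choice of $\bar v_n$ and Lemma \ref{lem:MU and S}, both legs of the span become $\Ee_2$-ring maps. This gives the algebra structure claimed in the theorem.

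\textbf{$E^2$-pages.} By Proposition \ref{prop:S[x]}, $H_*(\THH(\Ss[x_n]))=E(dx_n)\otimes P(x_n)$ maps to $E(d\mathrm{log}x_n)\otimes P(x_n)$ via $x_n\mapsto x_n$ and $dx_n\mapsto x_n\,d\mathrm{log}x_n$. As in Corollary \ref{cor:bpn-1}, both $x_n$ and $dx_n$ act trivially on $V(n)_*\THH(\BP{n})$. Hence the Tor module splits off $E(\lambda_1,\dots,\lambda_{n+1})\otimes P(\mu_{n+1})$. A Koszul-type resolution over $E(dx_n)\otimes P(x_n)$ computes the remaining factor for $E(d\mathrm{log}x_n)\otimes P(x_n)$. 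The factor $P(x_n)$ is free over itself, which kills the $[x_n]$-direction. The exterior part contributes $\Gamma([dx_n])$ and interacts with $x_n\,d\mathrm{log}x_n$. The result should be
\[
E^2_{*,*}(\log)\cong E(\lambda_1,\dots,\lambda_{n+1},d\mathrm{log}x_n)\otimes P(\mu_{n+1})\otimes\Gamma([dx_n]),
\]
possibly with $x_n$-torsion classes that I must track. For the $=0$ summand, only $x_n$-degree zero survives, and I expect $E(\lambda_1,\dots,\lambda_{n+1},d\mathrm{log}x_n)\otimes P(\mu_{n+1})\otimes\Gamma([dx_n])$ there as well.

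\textbf{Differentials.} In the cospan, the bottom map $\Ss\to\THH(\Ss[v_n^{\pm1}])_{=0}$ sends $[x_n]$ to a multiple of $d\mathrm{log}x_n$-related classes and preserves $\gamma_k([dx_n])$. By naturality, the differentials $d^p(\gamma_k([dx_n]))\doteq\lambda_{n+1}\gamma_{k-p}([dx_n])$ of Corollary \ref{cor:bpn-1} transfer to the middle term. They then pull back along the top map, which is injective on the relevant $E^2$-classes, to the top sequence. Leibniz then leaves
\[
E^\infty\cong E(\lambda_1,\dots,\lambda_n,d\mathrm{log}x_n)\otimes P(\mu_{n+1})\otimes P_p([dx_n]).
\]
The degree-counting argument from Corollary \ref{cor:bpn-1} should rule out any further differentials.

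\textbf{Extension.} I would set $\kappa_n$ to be the class represented by $[dx_n]$, of degree $2p^n$. The multiplicative extension $[dx_n]^p=\mu_{n+1}$ is detected by mapping to the middle term, or comparing with $V(n)_*\THH(\BP{n-1})$ via Corollary \ref{cor:bpn-1}. This yields $P(\kappa_n)$. Since $d\mathrm{log}x_n$ has odd degree $1$ and the target has odd-degree classes squaring to zero for $p$ odd, exterior relations follow by graded commutativity.

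The main obstacle is the $E^2$ computation together with the naturality comparison. I must verify that $dx_n$ maps to $x_n\,d\mathrm{log}x_n$ in homology. I must also make the three Tor computations compatible enough that the bottom-term differentials transfer injectively. I would first try to work over the $=0$ summand, where $P(x_n)$ drops out, and deduce the $\geq0$ case by the splitting by weights.
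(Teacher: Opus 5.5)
Your skeleton is the paper's: the cospan \eqref{eq:cospan} of Tor spectral sequences, importing $d^p(\gamma_k([dx_n]))\doteq\lambda_{n+1}\gamma_{k-p}([dx_n])$ from Corollary \ref{cor:bpn-1} through the middle term, ruling out longer differentials by filtration, and reading off $[dx_n]^p=\mu_{n+1}$ from the bottom. However, your middle-term calculation is wrong, and that is exactly the step that makes the transfer work.

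\textbf{The middle page.} Both $x_n$ and $dx_n\mapsto x_n\,d\mathrm{log}x_n$ have weight one. They therefore act trivially on the weight-zero coefficient module $E(d\mathrm{log}x_n)$, since $\THH(\Ss[x_n])$ acts on it through its augmentation. Hence the middle $E^2$-page is
\[
E(\lambda_1,\dots,\lambda_{n+1},d\mathrm{log}x_n)\otimes P(\mu_{n+1})\otimes E([x_n])\otimes\Gamma([dx_n]),
\]
not a copy of the top page. It keeps $[x_n]$, and its abutment is $E(d\mathrm{log}x_n)\otimes V(n)_*\THH(\BP{n-1})$, which contains $\epsilon_n$.

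\textbf{The two legs.} The bottom leg is induced by the unit $\Ff\to E(d\mathrm{log}x_n)$. It is therefore the injection $c\mapsto 1\otimes c$, with $[x_n]\mapsto[x_n]$; nothing lands in $d\mathrm{log}x_n$-classes. The twist you are sensing lives in the top leg instead. At chain level, the cycle representing $\gamma_k([dx_n])$ on top is $\gamma_k([dx_n])\pm d\mathrm{log}x_n\,[x_n]\,\gamma_{k-1}([dx_n])$, and that is its image in the middle, so this leg is injective as well. There are also no extra $x_n$-torsion classes on top: $E(d\mathrm{log}x_n)\otimes P(x_n)$ is $P(x_n)$-free, so change of rings gives the top page exactly.

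\textbf{How the differentials transfer.} Injectivity of both legs is what you need. In addition, $d\mathrm{log}x_n$ survives to $E^p$ for degree reasons, so the middle differentials are those of the bottom tensored with $E(d\mathrm{log}x_n)$. Naturality then forces $d^r=0$ for $2\leq r<p$ on top and pins down $d^p$ there.

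\textbf{The fallback fails.} Working over the $=0$ summand and deducing the $\geq0$ case by splitting by weights cannot work, for three reasons.
\begin{enumerate}
\item $\THH(\BP{n})$ is not weight-graded, since $x_n$ acts through $v_n$, so the top term has no weight decomposition.
\item The formal weight grading on the top $E^2$-page is not respected by the differentials: the $d^p$ above lowers weight by $p$.
\item The middle page is not the weight-zero part of the top page. That weight-zero part is only $E(d\mathrm{log}x_n)\otimes V(n)_*\THH(\BP{n})$ in filtration zero.
\end{enumerate}
Concretely, in degree $2p^n-1$ the middle abutment contains both $\lambda_n$ and $\epsilon_n$, while $V(n)_{2p^n-1}\THH(\BP{n},\langle v_n\rangle)$ is spanned by $\lambda_n$ alone. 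So the middle term is not a summand of the top. The comparison has to go through the maps of coefficient modules $E(d\mathrm{log}x_n)\otimes P(x_n)\to E(d\mathrm{log}x_n)\leftarrow\Ff$, exactly as in the cospan.
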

\begin{proof}
By Lemma \ref{lem:ass}, Equation \eqref{eq:cospan} induces a cospan of Tor spectral sequences whose $E^2$-pages are
\begin{equation*}
\begin{tikzcd}[row sep=1.3em]
{\mathrm{Tor}^{E(dx_n)\otimes P(x_n)}_{*,*}(E(d\log x_n)\otimes P(x_n),E(\lambda_1,\cdots,\lambda_{n+1})\otimes P(\mu_{n+1}))}
\arrow[d]
\\
{\mathrm{Tor}^{E(dx_n)\otimes P(x_n)}_{*,*}(E(d\log x_n),E(\lambda_1,\cdots,\lambda_{n+1})\otimes P(\mu_{n+1}))}
\\
{\mathrm{Tor}^{E(dx_n)\otimes P(x_n)}_{*,*}(\Ff,E(\lambda_1,\cdots,\lambda_{n+1})\otimes P(\mu_{n+1}))}.
\arrow[u]
\end{tikzcd}
\end{equation*}
Here, as before, we use the same notation for a class and its image in the corresponding Tor spectral sequence. Standard Tor calculations imply that this cospan is isomorphic to
\begin{equation*}
\begin{tikzcd}[row sep=1.3em]
{E(\lambda_1,\cdots,\lambda_{n+1})\otimes P(\mu_{n+1})\otimes E(d\mathrm{log}x_n)\otimes \Gamma([dx_n])}
\arrow[d]
\\
{E(\lambda_1,\cdots,\lambda_{n+1})\otimes P(\mu_{n+1})\otimes E(d\mathrm{log}x_n)\otimes E([x_n])\otimes \Gamma([dx_n])}
\\
{ E(\lambda_1,\cdots,\lambda_{n+1})\otimes P(\mu_{n+1})\otimes E([x_n])\otimes \Gamma([dx_n])}.
\arrow[u]
\end{tikzcd}
\end{equation*}
In particular, both maps are injective. For degree reasons, the class $d\mathrm{log}x_n$ survives to the $E^p$-page. Hence, by naturality and injectivity, the $d^p$-differentials in the bottom spectral sequence also occur in the middle spectral sequence. Since the upper map is also injective, the same $d^p$-differential also occurs on the top spectral sequence, leaving
\[E^{p+1}_{*,*}(v_n)\cong E(\lambda_1,\cdots,\lambda_n)\otimes P(\mu_{n+1})\otimes E(d\mathrm{log}x_n)\otimes P_p([dx_n]),\]
by Corollary \ref{cor:bpn-1}.
Since all surviving classes are concentrated in filtration degree at most $p-1$, there can be no further differentials. Hence, $E^{\infty}_{*,*}(v_n)\cong E^{p+1}_{*,*}(v_n)$. By Corollary \ref{cor:bpn-1}, there is a multiplicative extension $[dx_n]^p\doteq\mu_{n+1}$ in the bottom spectral sequence. Hence we deduce that 
\[V(n)_*\THH(\BP{n},\langle v_n\rangle)\cong E(\lambda_1,\cdots,\lambda_n,d\mathrm{log}x_n)\otimes P(\kappa_n),\]
with $\kappa_n$ represented by $[dx_n]$.
\end{proof}

\subsection{The multi-generator prelog structure \texorpdfstring{$\VV{n}$}{V(n)}}

We now turn to the prelog structure generated by $\langle v_1,\cdots,v_n\rangle$. As explained in the introduction, we compute its logarithmic THH by iterated logarithmic base change, repleting one generator at a time. For this purpose, for $1\leq i\leq n$, note that the $\Ee_2$-ring map
\[\Ss[x_1,\cdots,x_i]\xrightarrow{\otimes_{1\leq j\leq i}\tilde{v}_j}\otimes_{1\leq j\leq i} \BP{n}\xrightarrow{\mu}\BP{n}\]
induces a prelog $\Ee_2$-ring structure on $\BP{n}$, which is denoted by $(\BP{n},\VV{i})$. For $1\leq i\leq n$, we write
\[A_i:=\Ff\otimes\THH(\Ss[x_i]),\qquad B_i:=\Ff\otimes\THH(\Ss[x_i^{\pm1}])_{\geq0}.\]
We also set $L_{n,0}:=\Ff\otimes_{\BP{n}}\THH(\BP{n})$. For $1\leq i\leq n$, the $A_i$-action on $L_{n,0}$ is preserved under base change along
\[A_1\otimes\cdots\otimes A_{i-1}\to B_1\otimes\cdots\otimes B_{i-1}.\]
Hence $L_{n,i-1}$ is an $A_i$-module, and we define
\[L_{n,i}:=L_{n,i-1}\otimes_{A_i}B_i,\]
inductively. By Lemma \ref{lem:gd}, Lemma \ref{lem:ass}, and induction on $i$, there is an equivalence
\[L_{n,i}\simeq \Ff\otimes_{\BP{n}}\THH(\BP{n},\VV{i})\simeq V(n)\otimes \THH(\BP{n},\VV{i}) ,\]
for $1\leq i\leq n$. Our second main theorem is an immediate corollary of the following proposition.

\begin{thm}\label{thm:anyn}
	Suppose that the Smith--Toda complex $V(n)$ exists as a ring spectrum. Then there is an isomorphism of $\Ff$-modules
	\[\pi_*L_{n,i}\cong E(\lambda_1,\lambda_{i+2},\lambda_{n+1},d\mathrm{log}x_1,\cdots,d\mathrm{log}x_i)\otimes P(\mu_{n+1})\otimes\bigotimes_{r=1}^iP_p([dx_r]),\]
	for $1\leq i\leq n$.
\end{thm}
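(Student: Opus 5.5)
We read the exterior factor in the statement as $E(\lambda_1,\lambda_{i+2},\cdots,\lambda_{n+1},d\mathrm{log}x_1,\cdots,d\mathrm{log}x_i)$. This is the only reading compatible with Theorem \ref{thm:1.2} when $i=n$, where $\lambda_2,\cdots,\lambda_{n+1}$ are all gone. The expected mechanism is that repleting $x_r$ kills $\lambda_{r+1}$, exactly as repleting $x_n$ killed $\lambda_{n+1}$ in Theorem \ref{thm:main}.

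The proof is by induction on $i$, with $\pi_*L_{n,0}\cong E(\lambda_1,\cdots,\lambda_{n+1})\otimes P(\mu_{n+1})$ by Equation \eqref{eq:BP}. For the inductive step we use $L_{n,i}=L_{n,i-1}\otimes_{A_i}B_i$ and the associated Tor spectral sequence
\[E^2=\mathrm{Tor}^{E(dx_i)\otimes P(x_i)}_{*,*}\bigl(\pi_*L_{n,i-1},\,E(d\mathrm{log}x_i)\otimes P(x_i)\bigr)\Longrightarrow \pi_*L_{n,i}.\]
First I will show that $x_i$ and $dx_i$ act trivially on $\pi_*L_{n,i-1}$.
\begin{enumerate}
\item The action on $L_{n,i-1}$ is the base change of the action on $L_{n,0}$, so it suffices to check triviality on $L_{n,0}$.
\item On $L_{n,0}$, the class $x_i$ maps to $v_i=0$ in $\Ff$-coefficients.
\item The class $dx_i$ maps to $\sigma v_i$, which vanishes by the same suspension-compatibility argument used in Corollary \ref{cor:bpn-1}.
\end{enumerate}
Since the action is trivial, the standard Tor computation gives
\[E^2\cong \pi_*L_{n,i-1}\otimes E(d\mathrm{log}x_i)\otimes\Gamma([dx_i]).\]
The class $d\mathrm{log}x_i$ sits in filtration $0$ and degree $1$, so it is a permanent cycle.

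The key step, and the main obstacle, is the differential $d^p(\gamma_p([dx_i]))\doteq\lambda_{i+1}$. We no longer have multiplicativity, and the bottom of the analogue of the cospan \eqref{eq:cospan} is not a previously computed object. To get around this, I would exploit Lemma \ref{lem:gd}, which shows that the repletions commute, so $x_i$ may be repleted first. This reduces the differential to the single-generator situation $\THH(\BP{n})\otimes_{\THH(\Ss[x_i])}(-)$, i.e. the cospan \eqref{eq:cospan} with $x_i$ in place of $x_n$, whose bottom term is $V(n)\otimes\THH(\BP{n}/v_i)$ via the $\mathrm{MU}[x_i]$-description of $\bar v_i$. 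In that spectral sequence the differential should be forced by the Dyer--Lashof relation $Q^{p^i}(\sigma v_i)=\sigma v_{i+1}$ of \cite[Lemma 2.4.1]{HW22}. The Bökstedt-type formula identifies $d^p\gamma_p([dx_i])$ with the image of $\sigma v_{i+1}$, which is $\lambda_{i+1}$. Should this identification be hard to make precise, I would instead try a dimension count in total degree $2p^{i+1}-1$, as in Corollary \ref{cor:bpn-1}, using the module structure over $\pi_*L_{n,0}$. The differential then transports to the other repletions by naturality along the maps of spectral sequences induced by $A_r\to B_r$ for $r\neq i$. These maps are injective on $E^2$ by the same Tor calculation as in Theorem \ref{thm:main}.

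Granting this, I would then establish the remaining differentials $d^p(\gamma_{k+p}([dx_i]))\doteq\lambda_{i+1}\gamma_k([dx_i])$ for $k\ge 1$.
\begin{enumerate}
\item Since the Leibniz rule is not available, derive them from the module structure over the $\Ee_1$-ring $\Ff\otimes\THH(\Ss[x_i])$, or compare with the single-generator spectral sequence by naturality.
\item The bidegree bookkeeping of Corollary \ref{cor:bpn-1} shows that $r=p$ is the only possible length for these differentials.
\item The page left after these differentials is
\[E^{p+1}\cong \pi_*L_{n,i-1}/(\lambda_{i+1})\otimes E(d\mathrm{log}x_i)\otimes P_p([dx_i]).\]
\item All surviving classes lie in filtration at most $p-1$, and the targets are sparse in degree, so there are no further differentials.
\end{enumerate}
Since the statement only claims an isomorphism of $\Ff$-modules, no extensions need to be resolved. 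The induction then gives the stated formula.
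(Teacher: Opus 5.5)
\textbf{There is a genuine gap at the step you flag as the main obstacle:} the differential $d^p(\gamma_p([dx_i]))\doteq\lambda_{i+1}$ for $i<n$.

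Commuting the repletions via Lemma \ref{lem:gd} only moves the problem. It goes to the single-generator cospan for $x_i$, whose bottom term is $V(n)\otimes\THH(\BP{n}\otimes_{\Ss[x_i]}\Ss)\simeq V(n)\otimes\THH(\BP{n}/v_i)$. That abutment is not known in advance. The one case the paper computes, $n=2$, $i=1$, is itself obtained by comparison with $\BP{1}$, which is the idea described below. The dimension count of Corollary \ref{cor:bpn-1} works only because $V(n)_*\THH(\BP{n-1})$ is known from Equation \eqref{eq:bpn-1}. Here there is nothing to count against. The same objection applies to your higher differentials $d^p(\gamma_{k+p}([dx_i]))$, which in Corollary \ref{cor:bpn-1} are also forced by knowledge of the abutment.

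The $\mathrm{MU}[x_i]$-description you invoke is not available either. For $i<n$ the map $\widetilde v_i\colon\Ss[x_i]\to\BP{n}$ is only an $\Ee_2$-lift. The Remark after Lemma \ref{lem:lift} shows that no $\Ee_3$-map $\mathrm{MU}[x_i]\to\BP{i+1}$ with $x_i\mapsto v_i$ exists, precisely because $Q^{p^i}(\sigma v_i)=\sigma v_{i+1}$. So $\BP{n}\otimes_{\Ss[x_i]}\Ss$ is only an $\Ee_1$-ring and the spectral sequence is not multiplicative. A ``B\"okstedt-type formula'' turning that Dyer--Lashof relation into $d^p(\gamma_p([dx_i]))\doteq\lambda_{i+1}$ in this relative Tor spectral sequence is therefore an assertion, not an available tool.

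A smaller point: the comparison map on $E^2$ from the spectral sequence of $L_{n,0}\otimes_{A_i}B_i$ to that of $L_{n,i-1}\otimes_{A_i}B_i$ is not injective, since $\lambda_2,\dots,\lambda_i$ map to zero in $\pi_*L_{n,i-1}$. Pushing a differential forward does not need injectivity, but the claim is false as stated.

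\textbf{The missing idea is induction on the height $n$, not only on $i$.} The lifts $\widetilde v_i$ were chosen compatibly along $f_n\colon\BP{n}\to\BP{n-1}$ (Lemma \ref{lem:lift}). So $f_n$ induces a map of $V(n)_*\THH(\BP{n})$-module spectral sequences $E^r(n,i)\to E^r(n-1,i)$, factoring through the summand $L_{n-1,i}$ of $\widetilde L_{n-1,i}$ (Lemma \ref{lem:fac}). This map is injective on monomials free of $\lambda_{n+1}$ and $\mu_{n+1}$ in the relevant degree range (Lemma \ref{lem:inj}). A degree estimate using $i<n$ shows that every possible target of a $d^r$ with $r\le p$ lies in that range. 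Hence the differentials of $E^r(n,i)$ are forced by those of $E^r(n-1,i)$.

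Iterating reduces everything to $E^r(i,i)$, where $v_i$ is the top generator. There the $\Ee_3$-map $g_i\colon\mathrm{MU}[x_i]\to\BP{i}$ exists, and $E^r(i,i)$ is a module over the multiplicative spectral sequence of Theorem \ref{thm:main} at height $i$. That spectral sequence gets $d^p(\gamma_p([dx_i]))\doteq\lambda_{i+1}$, and all the higher $d^p$'s, from Corollary \ref{cor:bpn-1}, whose abutment $V(i)_*\THH(\BP{i-1})$ is known.

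Your remaining steps match the paper: the trivial action of $x_i$ and $dx_i$, the identification of the $E^2$-page, and the filtration argument excluding $d^r$ for $r>p$.
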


For $1\leq i\leq n$, we have the following Tor spectral sequence
\begin{equation}
	E_{*,*}^2(n,i)\cong\mathrm{Tor}_{*,*}^{E(dx_i)\otimes P(x_i)}(\pi_*L_{n,i-1},E(d\mathrm{log}x_i)\otimes P(x_i))\Longrightarrow \pi_*L_{n,i},
\end{equation}
by Proposition \ref{prop:S[x]} and the definition. Since the $A_i$-module structure on $L_{n,i-1}$ is induced by the $\Ee_1$-ring map
	\[\Ff\otimes\THH(\Ss[x_i])\to \Ff\otimes\THH(\BP{n})\to\Ff\otimes_{\BP{n}}\THH(\BP{n})\simeq V(n)\otimes\THH(\BP{n}),\]
 the same argument as in the proof of Corollary \ref{cor:bpn-1} implies that the elements $x_i$ and $dx_i$ act trivially on $\pi_*L_{n,i-1}$. Consequently, there is an isomorphism
 \begin{equation}\label{eq:induction}
 	E_{*,*}^2(n,i)\cong \pi_*L_{n,i-1}\otimes E(d\mathrm{log}x_i)\otimes\Gamma([dx_i]).
\end{equation}
Theorem \ref{thm:anyn} is a consequence of the following proposition.
\begin{prop}\label{prop:ssind}
	The $E^2$-page of $E^r(n,i)$ is isomorphism to
	\[E(\lambda_1,\lambda_{i+1},\cdots,\lambda_{n+1},d\mathrm{log}x_1,\cdots,d\mathrm{log}x_{i-1})\otimes P(\mu_{n+1})\otimes\bigotimes_{r=1}^{i-1}P_p([dx_r])\otimes E(d\mathrm{log}x_i)\otimes\Gamma([dx_i]).\]
	The nonzero differentials in $E^r(n,i)$ are generated by
    \begin{equation}\label{eq:dp}d^p(\gamma_k(dx_i)c)\doteq\lambda_{i+1}\gamma_{k-p}([dx_i])c, \qquad k\geq p
    \end{equation}
    for $c\in C_i$, together with linearity over $V(n)_*(\THH(\BP{n})$. Here 
    \[c\in C_i=\left\{\prod_{r=1}^{i-1}(d\mathrm{log}x_r)^{\epsilon_r}[dx_r]^{a_r} \ | \  \epsilon_r\in\{0,1\},\ 0\leq a_r<p\right\}.\] 
\end{prop}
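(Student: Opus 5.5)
We argue by induction on $i$, proving Proposition \ref{prop:ssind} and Theorem \ref{thm:anyn} together. Since
\[\pi_*L_{n,i-1}\cong E(\lambda_1,\lambda_{i+1},\cdots,\lambda_{n+1},d\mathrm{log}x_1,\cdots,d\mathrm{log}x_{i-1})\otimes P(\mu_{n+1})\otimes\bigotimes_{r<i}P_p([dx_r])\]
(by induction, with $L_{n,0}$ given by Equation \eqref{eq:BP}), the description of the $E^2$-page follows at once from Equation \eqref{eq:induction}. Presumably Theorem \ref{thm:anyn} is meant to read $E(\lambda_1,\lambda_{i+2},\cdots,\lambda_{n+1},\dots)$; this is what the induction produces.

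To find the differentials, we would copy the cospan argument of Theorem \ref{thm:main} one level up. We compare three spectral sequences: $E^r(n,i)$; the one for $L_{n,i-1}\otimes_{A_i}(\Ff\otimes\THH(\Ss[x_i^{\pm1}])_{=0})$; and the one for $L_{n,i-1}\otimes_{A_i}\Ff$. By Lemma \ref{lem:ass}, Remark \ref{rem:pi(a)} and the compatibility of the $\bar v_j$ from Section \ref{sec3}, the last abutment should be identified with a base change of $V(n)\otimes\THH(\BP{n}\otimes_{\Ss[x_i]}\Ss,\VV{i-1})$. On $E^2$-pages both comparison maps are injective, exactly as in the proof of Theorem \ref{thm:main}. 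So it suffices to show the differential $d^p(\gamma_k([dx_i]))\doteq\lambda_{i+1}\gamma_{k-p}([dx_i])$ in the bottom spectral sequence.

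For this we would first treat the case $c=1$ in the bottom spectral sequence by the same counting argument as in Corollary \ref{cor:bpn-1}. In total degree $2p^{i+1}-1$ there are only a few classes: $\lambda_{i+1}$, and products of $\gamma_{p-1}([dx_i])$ with degree-$(2p^i-1)$ classes. A bidegree check, namely that $d^r$ from $\gamma_k$ lands in filtration $k-r$, shows that only $\lambda_{i+1}$ can be hit. The rank of the abutment in that degree, computed from the known answer after killing $v_i$ (a $\BP{i-1}$-type quotient), forces the hit. We then extend to general $c\in C_i$ using linearity over $V(n)_*\THH(\BP{n})$, and also using that $d\mathrm{log}x_r$ and $[dx_r]$ for $r<i$ sit in filtration $0$ of $E^r(n,i)$ and are permanent cycles coming from $L_{n,i-1}$. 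The spectral sequence need not be multiplicative, but it is a spectral sequence of modules over the $\Ee_1$-ring $\Ff\otimes_{\BP{n}}\THH(\BP{n})$ acting on $L_{n,i-1}$, and this module structure is all we need.

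Once the $d^p$ differentials are known, the $E^{p+1}$-page is
\[\pi_*L_{n,i-1}/\lambda_{i+1}\otimes E(d\mathrm{log}x_i)\otimes P_p([dx_i]),\]
which is concentrated in filtrations below $p$, so no further differentials are possible. Additive extensions are harmless over $\Ff$, which gives Theorem \ref{thm:anyn}.

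The main obstacle is the middle step. Without an algebra structure on $L_{n,i}$ we cannot use the Leibniz rule, and we have to identify the bottom abutment concretely enough to do the dimension count. We would first try to show that $c=\prod(d\mathrm{log}x_r)^{\epsilon_r}[dx_r]^{a_r}$ is the image of a class under an action of $\bigotimes_{r<i}B_r$ on the whole tower, and that this action commutes with the differentials. Failing that, we would do the degree count separately for each $c$, using the sparseness of the degrees $2p^r$.
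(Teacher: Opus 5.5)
\textbf{There is a genuine gap.} Your identification of the $E^2$-page matches the paper, and so does the final step (after $d^p$ everything lies in filtration below $p$). You are also right that Theorem \ref{thm:anyn} should read $\lambda_{i+2},\cdots,\lambda_{n+1}$. The cospan reduction is harmless, but it moves all the work to the bottom spectral sequence, and there your argument has no input.

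\emph{The dimension count has no known abutment.} For $i<n$ the bottom abutment is $V(n)_*$ of (a log version of) $\THH(\BP{n}\otimes_{\Ss[x_i]}\Ss)$. Here $\BP{n}\otimes_{\Ss[x_i]}\Ss\simeq\BP{n}/v_i$ is not a ``$\BP{i-1}$-type quotient'': $v_{i+1},\cdots,v_n$ survive, it is a priori only $\Ee_1$, and its $V(n)$-homotopy of THH is not known independently. Even $\THH_*(\BP{2}/v_1;\Ff)$ in Section \ref{sec5} is obtained from the very comparison argument used for this proposition. So you cannot weigh the three classes $\lambda_{i+1}$, $\lambda_i\gamma_{p-1}([dx_i])$, $[x_i]\gamma_{p-1}([dx_i])$ against a $2$-dimensional abutment. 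That count works only for $i=n$, where the bottom abutment is $V(n)_*\THH(\BP{n-1})$ from Equation \eqref{eq:bpn-1}; that case is Corollary \ref{cor:bpn-1}.

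\emph{The passage from $c=1$ to general $c\in C_i$ also fails.} Linearity over $V(n)_*\THH(\BP{n})=E(\lambda_1,\cdots,\lambda_{n+1})\otimes P(\mu_{n+1})$ does not reach $d\mathrm{log}x_r$ or $[dx_r]$ for $r<i$, which are classes of the module $\pi_*L_{n,i-1}$. Being filtration-zero permanent cycles does nothing without a pairing. An action of $\bigotimes_{r<i}B_r$ would at best supply $d\mathrm{log}x_r$, since $\pi_*B_r\cong E(d\mathrm{log}x_r)\otimes P(x_r)$ has nothing in degree $2p^r$. A separate count for each $c$ runs into the same missing abutment.

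\emph{The missing idea is induction on the height through the truncation maps.} The paper uses $f_n$, made compatible with the $\widetilde{v}_j$ by Lemma \ref{lem:lift}, to get $\tilde f_{n,i}\colon L_{n,i}\to L_{n-1,i}$ (Lemma \ref{lem:fac}). This gives a map of $V(n)_*\THH(\BP{n})$-module spectral sequences $E^r(n,i)\to E^r(n-1,i)$ that is injective away from $\lambda_{n+1}$ and $\mu_{n+1}$ (Lemma \ref{lem:inj}). Every possible target $c'\gamma_{k-r}([dx_i])$ of a $d^r$ with $r\leq p$ has $|c'|<2p^{n+1}-1$. Hence all these differentials, for all $c$ at once, are pulled back from height $n-1$, and iterating reduces to height $i$, i.e. to the case $i=n$.

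In that top case, the $\Ee_3$-map $g_n\colon\mathrm{MU}[x_n]\to\BP{n}$ makes $L_{n,n}$ a module over $\Ff\otimes_{\BP{n}}\THH(\BP{n},\langle v_n\rangle)$. So $E^r(n,n)$ is a module over the multiplicative spectral sequence of Theorem \ref{thm:main}. Each $c\in C_n$ lies in filtration zero, so $d^p(\gamma_k([dx_n])c)=d^p(\gamma_k([dx_n]))\,c$. That module structure is what handles the coefficients $c$, and the height comparison replaces the dimension count you cannot perform.
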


\begin{proof}[proof of Theorem \ref{thm:anyn}]
	By Proposition \ref{prop:ssind}, there is an isomorphism
	\[E^{p+1}_{*,*}(n,i)\cong E(\lambda_1,\lambda_{i+2},\cdots,\lambda_{n+1},d\mathrm{log}x_1,\cdots,d\mathrm{log}x_i)\otimes P(\mu_{n+1})\otimes\bigotimes_{r=1}^iP_p([dx_r]),\]
	which is concentrated in filtration degrees at most $p-1$. Hence no differential $d^r$ with $r>p$ can occur, and therefore $E^{p+1}_{*,*}(n,i)\cong E^\infty_{*,*}(n,i).$
\end{proof}

We prove this proposition by induction on both $n$ and $i$. For this, we need the following lemmas.

\begin{lem}
	The spectral sequence $E^r_{*,*}(n,i)$ is a spectral sequence of $V(n)_*\THH(\BP{n})$-modules.
\end{lem}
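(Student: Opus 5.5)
The plan is to show that $L_{n,i}$ is a module over $L_{n,0}=\Ff\otimes_{\BP{n}}\THH(\BP{n})$ in a way compatible with the iterated base change, and that the Tor spectral sequence is natural in the module variable.

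First, I will produce a module structure on $L_{n,i-1}$ over the $\Ee_2$-ring $L_{n,0}$ (Proposition \ref{prop:n-1case}) that commutes with the $A_i$-action. By Dunn additivity, $\BP{n}$ is an $\Ee_1$-algebra in $\Ee_2$-rings. The structure map $\Ss[x_1,\cdots,x_n]\otimes\BP{n}\to\BP{n}$ used in Proposition \ref{prop:eqV(n)} is then an $\Ee_2$-ring map. Applying $\THH$ makes $\THH(\BP{n})$ a left module over $\THH(\Ss[x_1,\cdots,x_n])\otimes\THH(\BP{n})$. Here the $\THH(\BP{n})$-factor acts via the $\Ee_1$-multiplication, which commutes with the $\THH(\Ss[x_j])$-actions because they enter through different tensor factors.

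Next, I base change along $\BP{n}\to\Ff$ (an $\Ee_3$-map). This gives $L_{n,0}$ the structure of a left module over $(A_1\otimes\cdots\otimes A_n)\otimes_{\Ff} L_{n,0}$, where $L_{n,0}$ acts on itself. I then base change successively along $A_j\to B_j$ for $j<i$. Each base change only touches the $A_j$-factor, so it preserves the commuting $L_{n,0}$-action. Hence $L_{n,i-1}$ is an $(A_i\otimes L_{n,0})$-module, and $L_{n,i}=L_{n,i-1}\otimes_{A_i}B_i$ is an $L_{n,0}$-module. Lemma \ref{lem:ass} handles the bookkeeping of the order of the tensor products.

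Second, I will build the Tor spectral sequence from the bar construction $\mathrm{Bar}(L_{n,i-1},A_i,B_i)$, filtered by skeleta. Since $L_{n,0}$ acts on the first variable by $A_i$-linear maps, it acts on each simplicial level $L_{n,i-1}\otimes A_i^{\otimes s}\otimes B_i$, compatibly with the face and degeneracy maps. So the skeletal filtration is a filtration by $L_{n,0}$-modules, and the spectral sequence is a spectral sequence of $\pi_*L_{n,0}$-modules. Since $\pi_*L_{n,0}=V(n)_*\THH(\BP{n})$, this is the required statement. On the $E^2$-page the action is the evident one on the first Tor variable, consistent with \eqref{eq:induction}.

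The main obstacle is the first step: producing a genuinely coherent, commuting pair of actions (of $L_{n,0}$ and $A_i$) on $L_{n,i-1}$. Only homotopy-level commutation would not give an action on the bar construction. I would handle this by doing the construction with $\Ee_1$-algebras in $\Ee_2$-rings, where tensor products of module categories make bimodule structures automatic. If that becomes delicate, a fallback is available: it suffices to have a left $L_{n,0}$-action in $\mathrm{Mod}_{A_i}$, that is, $L_{n,i-1}\in\mathrm{LMod}_{L_{n,0}}(\mathrm{RMod}_{A_i})$. This can be obtained because $L_{n,0}\otimes A_i\to L_{n,0}$ is an $\Ee_1$-map factoring the action.
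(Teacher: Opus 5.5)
Your proposal is correct and follows essentially the same route as the paper. In both, the key input is that $L_{n,0}=\Ff\otimes_{\BP{n}}\THH(\BP{n})$ is an $\Ee_2$-ring, so $\bigl(\bigotimes_r A_r\bigr)\otimes L_{n,0}\to L_{n,0}\otimes L_{n,0}\to L_{n,0}$ is an $\Ee_1$-map giving commuting actions (this is exactly your fallback); the bar construction is then formed in $L_{n,0}$-modules, and its skeletal filtration gives a module spectral sequence. The difference is only bookkeeping: the paper forms the single bar construction $B(L_{n,0},\bigotimes_{r\le i}A_r,\bigotimes_{r\le i}B_r)$, whereas you carry the action through the iterated base changes and use the one-step $B(L_{n,i-1},A_i,B_i)$, which matches the definition of $E^r(n,i)$ more precisely.
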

\begin{proof}
	Since $\Ff\otimes_{\BP{n}}\THH(\BP{n})$ is an $\Ee_2$-ring, the $\Ee_1$-ring map
	\begin{equation*}
	\begin{split}
		\bigotimes_{r=1}^iA_r\otimes (\Ff\otimes_{\BP{n}}\THH(\BP{n})) & 
		\to (\Ff\otimes_{\BP{n}}\THH(\BP{n}))\otimes (\Ff\otimes_{\BP{n}}\THH(\BP{n}))\\
		& \xrightarrow{\mu} (\Ff\otimes_{\BP{n}}\THH(\BP{n}))
	\end{split}
	\end{equation*}
	equips $\Ff\otimes_{\BP{n}}\THH(\BP{n})$ with a right $(\bigotimes_{r=1}^iA_r\otimes (\Ff\otimes_{\BP{n}}\THH(\BP{n})))$-module structure. In other words, the natural $\Ff\otimes_{\BP{n}}\THH(\BP{n})$-module structure on $\Ff\otimes_{\BP{n}}\THH(\BP{n})$ is compatible with the right $(\bigotimes_{r=1}^iA_r)$-module structure on $\Ff\otimes_{\BP{n}}\THH(\BP{n})$.
	Consequently, the bar construction $B(\Ff\otimes_{\BP{n}}\THH(\BP{n}),\bigotimes_{r=1}^iA_r,\bigotimes_{r=1}^iB_r)$ can be formed in the category of $(\Ff\otimes_{\BP{n}}\THH(\BP{n})$-modules, which implies the desired result.
\end{proof}

Given $1\leq i< n$, assume that Proposition \ref{prop:ssind} is true for $j<i$. Then Equation \eqref{eq:induction} implies that the $E^2$-page of the spectral sequence $E_{*,*}^r(n,i)$ is isomorphic to
\begin{equation}\label{eq:ind,i}
	E(\lambda_1,\lambda_{i+1},\cdots,\lambda_{n+1},d\mathrm{log}x_1,\cdots,d\mathrm{log}x_{i-1})\otimes P(\mu_{n+1})\otimes\bigotimes_{r=1}^{i-1}P_p([dx_r])\otimes E(d\mathrm{log}x_i)\otimes\Gamma([dx_i]).
\end{equation}
\begin{lem}\label{lem:fac}
	For $1\leq i< n$, there is an equivalence of $\Ff\otimes_{\BP{n}}\THH(\BP{n})$-modules
	\[\widetilde{L}_{n-1,i}:=(\Ff\otimes_{\BP{n}}\THH(\BP{n-1}))\otimes_{\bigotimes_{r=1}^iA_r}{\bigotimes_{r=1}^iB_r}\simeq L_{n-1,i}\oplus\Sigma^{2p^n-1}L_{n-1,i}.\]
	Moreover, the $\Ee_3$-ring map $f_n\colon\BP{n}\to\BP{n-1}$ induces a map of $\Ff\otimes_{\BP{n}}\THH(\BP{n})$-modules $f_{n,i}\colon L_{n,i}\to \tilde{L}_{n-1,i}$ which factors as
	\[L_{n,i}\xrightarrow{\tilde{f}_{n,i}} L_{n-1,i}\hookrightarrow \widetilde{L}_{n-1,i}.\]
\end{lem}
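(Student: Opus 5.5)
The plan is to first split $\Ff\otimes_{\BP{n}}\THH(\BP{n-1})$ before any logarithmic base change, and then push the splitting through the relative tensor product. Since $f_n$ is an $\Ee_3$-ring map, we have an equivalence
\[\Ff\otimes_{\BP{n}}\THH(\BP{n-1})\simeq (\Ff\otimes_{\BP{n}}\BP{n-1})\otimes_{\BP{n-1}}\THH(\BP{n-1}).\]
By Remark \ref{rem:pi(a)}, $\BP{n-1}\simeq \BP{n}\otimes_{\Ss[x_n]}\Ss$, so
\[\Ff\otimes_{\BP{n}}\BP{n-1}\simeq \Ff\otimes_{\Ss[x_n]}\Ss .\]
Because $x_n$ acts by zero on $\Ff$ (it maps to $v_n=0$), this is $\Ff\oplus\Sigma^{2p^n-1}\Ff$ as an $\Ff$-module; this is the $v_n$-Bockstein cofiber sequence, split because $\Ff$ is an $\Ff$-module. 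Hence, as $\Ff\otimes_{\BP{n-1}}\THH(\BP{n-1})$-modules,
\[\Ff\otimes_{\BP{n}}\THH(\BP{n-1})\simeq L_{n-1,0}\oplus \Sigma^{2p^n-1}L_{n-1,0}.\]
This recovers the extra exterior class $\epsilon_n$ in \eqref{eq:bpn-1}.

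Next we check that the splitting is compatible with the right $\bigotimes_{r\le i}A_r$-actions. The $A_r$-action for $r\le i<n$ is induced from $\THH(\Ss[x_r])\to\THH(\BP{n})\to\THH(\BP{n-1})$. By Lemma \ref{lem:lift} and the remark following Definition \ref{def:logBP}, this agrees with the action defining the prelog structure on $\BP{n-1}$. It therefore acts on the $\THH(\BP{n-1})$ factor only, commuting with the splitting, which lives in the coefficient factor $\Ff\otimes_{\BP{n}}\BP{n-1}$. Tensoring with $\bigotimes B_r$ over $\bigotimes A_r$ preserves direct sums, and by the same inductive identification used for $L_{n,i}$ (Lemma \ref{lem:gd}, Lemma \ref{lem:ass}) each summand becomes $L_{n-1,i}$. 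This gives the first claim. Module structures over $\Ff\otimes_{\BP{n}}\THH(\BP{n})$ are obtained by restriction along $\THH(\BP{n})\to\THH(\BP{n-1})$.

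For the second claim, $f_n$ induces a map of prelog rings $(\BP{n},\VV{i})\to(\BP{n-1},\VV{i})$ by the remarks above, using Lemma \ref{lem:aug} only for the factor $x_n$, which is not involved since $i<n$. Applying $\Ff\otimes_{\BP{n}}-$ to $\THH(\BP{n})\to\THH(\BP{n-1})$ and then base changing gives $f_{n,i}\colon L_{n,i}\to\widetilde L_{n-1,i}$. To get the factorization, observe that already at $i=0$ the map $\Ff\otimes_{\BP{n}}\THH(\BP{n})\to\Ff\otimes_{\BP{n}}\THH(\BP{n-1})$ is induced by the unit $\Ff\to\Ff\otimes_{\Ss[x_n]}\Ss$ on coefficients. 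This unit is the inclusion of the bottom summand $\Ff$, so the map lands in $L_{n-1,0}$. Base change then preserves the factorization, defining $\tilde f_{n,i}$.

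The main obstacle is making the splitting genuinely one of $\Ff\otimes_{\BP{n}}\THH(\BP{n})$-modules that commutes with the $A_r$-actions, rather than merely a splitting of underlying spectra. I would handle this by building everything as modules over $\Ff\otimes_{\BP{n}}\BP{n-1}$ in bimodules, using the $\Ee_2$-structure from Remark \ref{rem:pi(a)}. The splitting of $\Ff\otimes_{\Ss[x_n]}\Ss$ then needs only to be $\Ff$-linear, and one checks that the $A_r$ and $\THH(\BP{n})$ actions factor through $\THH(\BP{n-1})$.
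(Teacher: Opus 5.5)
Your outline follows the paper's argument. You use that $f_n$ is $\Ee_3$ to write $\Ff\otimes_{\BP{n}}\THH(\BP{n-1})\simeq R\otimes_{\BP{n-1}}\THH(\BP{n-1})$ with $R:=\Ff\otimes_{\BP{n}}\BP{n-1}\simeq\Ff\oplus\Sigma^{2p^n-1}\Ff$, base change along $\bigotimes_rA_r\to\bigotimes_rB_r$, and read off the factorization from the unit $\eta\colon\Ff\to R$ being the bottom-cell inclusion. The detour through $\Ff\otimes_{\Ss[x_n]}\Ss$ is harmless.

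The gap is exactly at the point you flag, and your proposed fix does not close it. An $\Ff$-linear splitting of $R$ cannot be pushed through $-\otimes_{\BP{n-1}}\THH(\BP{n-1})$; that needs a splitting of right $\BP{n-1}$-modules. $\Ff$-linearity really is insufficient in general: $\Ff\otimes_{\Zz}\Ff\simeq\Ff\oplus\Sigma\Ff$ as left $\Ff$-modules but not as $\Ff$-bimodules, since $\THH(\Ff;\Ff\otimes_{\Zz}\Ff)\simeq\THH(\Zz;\Ff)$ vanishes in degree $1$. Here a right $\BP{n-1}$-linear splitting does exist and is unique. The extension class and the indeterminacy of a section lie in groups dual to $\pi_{2p^n}$ and $\pi_{2p^n-1}$ of $\Ff\otimes_{\BP{n-1}}\Ff$. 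That homotopy is exterior on classes of degrees $1,2p-1,\dots,2p^{n-1}-1$, so it vanishes in those degrees. Also, $\Ff\otimes_{\BP{n}}\THH(\BP{n-1})$ has no natural $\Ff\otimes_{\BP{n-1}}\THH(\BP{n-1})$-module structure, because the ring map goes the other way. The structure to track is the one over $(\Ff\otimes_{\BP{n}}\THH(\BP{n}))\otimes\bigotimes_rA_r$.

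The more serious problem is that your two halves use incompatible splittings. For the right $\BP{n-1}$-linear splitting, $\eta$ is \emph{not} the bottom inclusion. It is only right $\BP{n}$-linear, and its component $\Ff\to\Sigma^{2p^n-1}\Ff$ is the $v_n$-Bockstein $Q_n$. To see this, write $\Ff\simeq\BP{n}/(p,v_1,\dots,v_n)$ as a right $\BP{n}$-module. Then $R$ is the cofiber of the null map $\Sigma^{2p^n-2}\Ff\to\Ff$ of right $\BP{n-1}$-modules, and the top projection is its boundary map. Naturality of boundaries identifies the top component of $\eta$ with the boundary for $v_n$ on $\BP{n}/(p,\dots,v_{n-1})$ followed by reduction.

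Consequently, the $\Sigma^{2p^n-1}L_{n-1,0}$-component of $f_{n,0}$, precomposed with the unit of $L_{n,0}$, is $Q_n$ followed by a split injection, hence nonzero. So ``base change preserves the factorization'' cannot be run on the splitting that commutes with the tensor products. The $\Ff$-linear splitting in which $\eta$ is the bottom inclusion does not commute with them. The paper's proof identifies $\Ff\otimes_{\BP{n}}\BP{n}\to\Ff\otimes_{\BP{n}}\BP{n-1}$ with the canonical inclusion in the same way, without addressing this.

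What survives is the map $\tilde f_{n,i}$ itself, and this is all the later comparison of spectral sequences uses. Define it as $f_{n,i}$ followed by the projection $\widetilde L_{n-1,i}\to L_{n-1,i}$ induced by the bimodule map $R\to\Ff\otimes_{\BP{n-1}}\BP{n-1}=\Ff$. Equivalently, it is the natural map $\Ff\otimes_{\BP{n}}\THH(\BP{n},\VV{i})\to\Ff\otimes_{\BP{n-1}}\THH(\BP{n-1},\VV{i})$. Build the argument on that map rather than on a splitting.
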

\begin{proof}
	For the first assertion, it follows from a similar argument as the above lemma that we have the following equivalences of right $(\Ff\otimes_{\BP{n}}\THH(\BP{n}))\otimes\bigotimes_{r=1}^iA_r$-modules
	\begin{equation*}
		\begin{split}
			\Ff\otimes_{\BP{n}}\THH(\BP{n-1}) & \simeq \Ff\otimes_{\BP{n}}(\BP{n-1}\otimes_{\BP{n-1}}\THH(\BP{n-1}))\\
			& \simeq (\Ff\otimes_{\BP{n}}\BP{n-1})\otimes_{\BP{n-1}}\THH(\BP{n-1})\\
			& \simeq (\Ff\oplus\Sigma^{2p^n-1}\Ff)\otimes_{\BP{n-1}}\THH(\BP{n-1}).
		\end{split}
	\end{equation*}
	since the map $f_n$ is an $\Ee_3$-ring map. To see the second assertion, note that, under the above equivalence, Lemma \ref{lem:lift} implies that the map  $L_{n,i}\to \tilde{L}_{n-1,i}$ is given by the bar construction
	\begin{equation*}
		\begin{split}
			& B(\Ff\otimes_{\BP{n}}\BP{n},\BP{n},\THH(\BP{n}))\otimes_{\bigotimes_{r=1}^iA_r}\bigotimes_{r=1}^iB_r)\\ 
			& \to B(\Ff\otimes_{\BP{n}}\BP{n-1},\BP{n-1},\THH(\BP{n-1}))\otimes_{\bigotimes_{r=1}^iA_r}\bigotimes_{r=1}^iB_r),
		\end{split}
	\end{equation*}
	and that $\Ff\otimes_{\BP{n}}\BP{n}\to \Ff\otimes_{\BP{n}}\BP{n-1}$ is equivalent to the canonical inclusion $\Ff\to\Ff\otimes\Sigma^{2p^n-1}\Ff$.
\end{proof}

\begin{lem}\label{lem:inj}
	Let $1\leq i<n$. Assume that Proposition \ref{prop:ssind} is true at height $n-1$ and for $j<i$. Then $\tilde{f}_{n,i}$ induces a map of $V(n)_*\THH(\BP{n})$-module spectral sequences $\tilde{f}_{n,i}^{\ 2}\colon E^2_{n,i}\to E^2_{n-1,i},$ which is injective on the submodule not involving $\lambda_{n+1}$ and $\mu_{n+1}$. In particular, $\tilde{f}_{n,i}^{\ 2}$ is injective on every class represented by such a monomial whose total degree is strictly smaller than $2p^{n+1}-1$.
\end{lem}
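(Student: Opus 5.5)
Here $E^2_{n,i}$ and $E^2_{n-1,i}$ denote the $E^2$-pages of the Tor spectral sequences converging to $\pi_*L_{n,i}$ and $\pi_*L_{n-1,i}$. The plan is to build $\tilde f_{n,i}^{\,2}$ from the morphism of bar constructions in Lemma \ref{lem:fac}, and then check injectivity generator by generator on the explicit $E^2$-pages.

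\textbf{Step 1: the map of spectral sequences.} Both $L_{n,i}$ and $L_{n-1,i}$ are of the form $L_{\bullet,i-1}\otimes_{A_i}B_i$. The map $\tilde f_{n,i}$ is induced by $f_n$ together with the identity on $A_i\to B_i$. By Lemma \ref{lem:fac} it is a map of $\Ff\otimes_{\BP{n}}\THH(\BP{n})$-modules, and it is compatible with the bar filtrations. It therefore induces a map of $V(n)_*\THH(\BP{n})$-module Tor spectral sequences. On $E^2$-pages this map is
\[
\mathrm{Tor}^{E(dx_i)\otimes P(x_i)}\bigl(\pi_*L_{n,i-1},E(d\log x_i)\otimes P(x_i)\bigr)\to \mathrm{Tor}^{E(dx_i)\otimes P(x_i)}\bigl(\pi_*L_{n-1,i-1},E(d\log x_i)\otimes P(x_i)\bigr).
\]
Since $x_i$ and $dx_i$ act trivially on both sides, Equation \eqref{eq:induction} identifies this with
\[
(\tilde f_{n,i-1})_*\otimes\mathrm{id}\colon \pi_*L_{n,i-1}\otimes E(d\log x_i)\otimes\Gamma([dx_i])\to \pi_*L_{n-1,i-1}\otimes E(d\log x_i)\otimes\Gamma([dx_i]).
\]
Injectivity is therefore reduced to injectivity of $(\tilde f_{n,i-1})_*$ on the submodule of $\pi_*L_{n,i-1}$ not involving $\lambda_{n+1}$ or $\mu_{n+1}$.

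\textbf{Step 2: induction on $i$.} For $i=1$ we have $L_{n,0}=\Ff\otimes_{\BP{n}}\THH(\BP{n})$, and the map to $L_{n-1,0}$ is the map $V(n)_*\THH(\BP{n})\to V(n-1)_*\THH(\BP{n-1})$ induced by $f_n$. On the generators $\lambda_1,\dots,\lambda_n$ it is the identity, by naturality of the $\lambda_j$ constructed via $\sigma$ of the $\xi$/$\tau$ classes. The exterior algebra $E(\lambda_1,\dots,\lambda_n)$ injects into $E(\lambda_1,\dots,\lambda_n)\otimes P(\mu_n)$.

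For $i>1$, we use the hypotheses that Proposition \ref{prop:ssind} holds for $j<i$ at height $n$ and at height $n-1$, so both abutments are known. The map on $E^\infty$-pages sends $\lambda_j$ (for $j\le n$), $d\log x_r$ and $[dx_r]$ (for $r<i$) to the classes of the same name. This holds by naturality of the repletion classes, since the prelog structures are compatible by the remark following Definition \ref{def:logBP}.

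A monomial avoiding $\lambda_{n+1}$ and $\mu_{n+1}$ then maps to a nonzero monomial in the target on the associated graded. Distinct monomials go to distinct monomials, so the map is injective on associated graded. A filtration-preserving map that is injective on associated graded is injective, provided the filtrations are finite in each degree, which holds because the bar filtration is bounded.

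\textbf{Step 3: the degree statement.} Any monomial containing $\lambda_{n+1}$ or $\mu_{n+1}$ has total degree at least $2p^{n+1}-1$. So a class of total degree below $2p^{n+1}-1$ lies in the submodule of Step 2, and injectivity follows.

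\textbf{Main obstacle.} The delicate point is the naturality claim in Step 2. We must check that $\tilde f$ sends $[dx_r]$ and $d\log x_r$ to the corresponding classes at height $n-1$, and not to something involving $\mu_n$ or $\epsilon_n$. I would settle this by observing that these classes come from the $B_r$-factors, on which $\tilde f_{n,i}$ is the identity by Lemma \ref{lem:lift}, and then using module linearity.
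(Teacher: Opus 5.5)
Your proposal is correct and follows essentially the same route as the paper. Both arguments use the compatible bar constructions $B(L_{n,j-1},A_j,B_j)\to B(L_{n-1,j-1},A_j,B_j)$ and induction on the repletion index to show that $\lambda_1,\dots,\lambda_n$, $d\log x_r$ and $[dx_r]$ go to the classes of the same name, then read off injectivity from the explicit $E^2$-pages in Equation \eqref{eq:ind,i}; your identification of the $E^2$-map as $(\tilde f_{n,i-1})_*\otimes\mathrm{id}$ and your associated-graded argument for passing from $E^\infty$ to the abutment simply make explicit what the paper leaves to \emph{the construction of the spectral sequences}.
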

\begin{proof}
	Consider the following commutative diagram
	\[
	\begin{tikzcd}
	V(n)_*\THH(\BP{n}) \arrow[r] \arrow[d] & \pi_*L_{n,i} \arrow[d] \\
	V(n)_*\THH(\BP{n-1}) \arrow[r] & \pi_*L_{n-1,i}.
	\end{tikzcd}
	\]
	By the construction of the spectral sequences, $\lambda_1,\cdots,\lambda_n\in E^2_{*,0}(n,i)$ come from the image of the top map, and the classes with the same names in $E^2_{*,0}(n-1,i)$ come from the image of the bottom map. Hence $\tilde{f}_{n,i}^{\ 2}$ maps $\lambda_1,\cdots,\lambda_n$ to the classes with the same names. Note that, for $1\leq j <i$, $\tilde{f}_{n,j}$ is obtained by the bar construction
	\[B(L_{n,j-1},A_j,B_j)\to B(L_{n-1,j-1},A_j,B_j).\]
	Hence an induction on $j$ and the construction of the spectral sequences imply that $\tilde{f}_{n,i}^{\ 2}$ maps $d\mathrm{log}x_1,\cdots,d\mathrm{log}x_i$ and $[dx_1],\cdots,[dx_i]$ to the classes with the same names. The result follows from Equation \eqref{eq:ind,i}.
\end{proof}

\begin{proof}[proof of Proposition \ref{prop:ssind}]
	The case $n=1$ follows from Theorem \ref{thm:main}. Now, for a fixed $n$ and $1<i<n$, suppose that the proposition is true for $k<n$ and $j<i$ for this $n$, we prove that the proposition is true for $n$ and $j=i$. First, the $E^2$-page of the spectral sequence has been identified by Equation \eqref{eq:bpn-1} and a computation as in the proof of Theorem \ref{thm:anyn}.
	
	Next, assume that $1\leq i<n$. Lemma \ref{lem:lift} and Lemma \ref{lem:fac} implies that there is a comparison map of spectral sequences $\tilde{f}^{\ r}_{n,i}\colon E^r(n,i)\to E^r(n-1,i)$ which converges to the map $\pi_*L_{n,i}\to \pi_*L_{n-1,i}$. By the inductive hypothesis on $n$, there are nonzero differentials in the latter spectral sequence 
	\[d^p(c\cdot\gamma_k(dx_i))\doteq\lambda_{i+1}\cdot c\cdot\gamma_{k-p}([dx_i]),\]
	with $c\in C_i$, and no earlier nonzero differential. Hence it remains to show that no differential in $E^r(n,i)$ can differ from the differential in $E^r(n-1,i)$ by a class in the kernel of the comparison map. For $c\in C_i$, a differential $d^r(c\cdot\gamma_k([dx_i])$ has the filtration degree $k-r$. Since $[dx_i]$ is the only positive filtration generator in $E^r(n,i)$, the possible target of this differential has the form $c^{\prime}\cdot\gamma_{k-r}([dx_i])$, which implies that
	\[|c^{\prime}|=|c|+2rp^i-1.\]
	Hence, for $2\leq r\leq p$, we have that
	\[|c^{\prime}|\leq \Sigma_{j=1}^{i-1}2p^r+i-1+2p^{i+1}-1\leq 2p^{i+1}+2p^i-2p+i-2.\]
	Since $n\geq i+1$ and $p$ is odd, we deduce that 
	\[|c^{\prime}|<2p^{n+1}-1.\]
	Therefore, Lemma \ref{lem:inj} implies that $\tilde{f}^{\ r}_{n,i}$ is injective on every possible target of a differential with $r\leq p$. For $2\leq r<p$, since the differential in $E^r_{*,*}(n-1,i)$ is trivial, so is that in $E^r_{*,*}(n,i)$. For $r=p$, naturality and injectivity force $c^{\prime}$ must be $\lambda_{i+1}c$, up to a nonzero scalar in $\Ff$. The desired result follows from that $\tilde{f}^{\ r}_{n,i}$ is $V(n)_*\THH(\BP{n})$-linear, by Lemma \ref{lem:inj}.
	
	It remains to consider the case $i=n$. Note that there is an $\Ff\otimes\THH(\mathrm{MU}[x_n])$-action on $L_{n,n-1}$ is induced by the $\Ff\otimes\THH(\mathrm{MU}[x_n])$-action on $L_{n,0}$ which is preserved under the base change along $\bigotimes_{i=1}^{n-1}A_i\to \bigotimes_{i=1}^{n-1}B_i$. Moreover, there is a natural map 
	\[L_{n,n}=B(L_{n,n-1},A_i,B_i)\simeq B(L_{n,n-1},\Ff\otimes\THH(\mathrm{MU}[x_n]),\Ff\otimes\THH(\mathrm{MU}[x_n^{\pm1}])).\]
	Since the bar construction is symmetric monoidal in the variables $(M,A,N)$, a similar argument as the proof of Proposition \ref{prop:n-1case} implies that $L_{n,n}$ is a $\Ff\otimes_{\BP{n}}\THH(\BP{n},\langle v_n\rangle)$-module, and the associated spectral sequence $E^r_{*,*}(n,n)$ is a module spectral sequence over the spectral sequence $E^r_{*,*}(v_n)$. Consequently, we get the desired differentials, by Theorem \ref{thm:main}.
\end{proof}

\begin{cor}
	Let $p\geq7$. There is an isomorphism 
	\[V(2)_*\THH(\BP{2},\langle v_1\rangle)\cong E(\lambda_1,\lambda_3,d\mathrm{log}x_1)\otimes P(\mu_3)\otimes P_p([dx_1]).\]
\end{cor}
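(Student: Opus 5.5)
The corollary is the case $n=2$, $i=1$ of Theorem \ref{thm:anyn}, so the plan is to specialize the iterated repletion computation. First, for $p\geq 7$ the Smith--Toda complex $V(2)$ exists as a homotopy ring spectrum by \cite{Smi70,Toda71}, so the standing hypothesis of this section holds. Next, by the equivalence $L_{2,1}\simeq \Ff\otimes_{\BP{2}}\THH(\BP{2},\langle v_1\rangle)\simeq V(2)\otimes\THH(\BP{2},\langle v_1\rangle)$ recorded before Theorem \ref{thm:anyn}, we have $V(2)_*\THH(\BP{2},\langle v_1\rangle)\cong \pi_*L_{2,1}$. (The statement of Theorem \ref{thm:anyn} has a typo: the exterior factor should be $E(\lambda_1,\lambda_{i+2},\cdots,\lambda_{n+1},\dots)$. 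This is consistent with the $E^{p+1}$-page displayed in its proof.)

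Concretely, I would run the Tor spectral sequence $E^r_{*,*}(2,1)$. By Equation \eqref{eq:induction} and Equation \eqref{eq:BP}, its $E^2$-page is
\[
E(\lambda_1,\lambda_2,\lambda_3)\otimes P(\mu_3)\otimes E(d\mathrm{log}x_1)\otimes\Gamma([dx_1]).
\]
By Proposition \ref{prop:ssind} with $C_1=\{1\}$, the only differentials are generated by
\[
d^p(\gamma_k([dx_1]))\doteq\lambda_2\,\gamma_{k-p}([dx_1]),\qquad k\geq p,
\]
extended linearly over $V(2)_*\THH(\BP{2})$. Taking homology removes $\lambda_2$ and truncates $\Gamma([dx_1])$ to $P_p([dx_1])$. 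This gives
\[
E^{p+1}_{*,*}\cong E(\lambda_1,\lambda_3,d\mathrm{log}x_1)\otimes P(\mu_3)\otimes P_p([dx_1]).
\]
This page is concentrated in filtration at most $p-1$, so it equals $E^\infty$.

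Since the corollary is an isomorphism of $\Ff$-modules, there is no extension problem to resolve: over a field the abutment is additively isomorphic to $E^\infty$.

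The main point needing care is the input to Proposition \ref{prop:ssind} at $(n,i)=(2,1)$. That case rests on comparing, via Lemma \ref{lem:fac} and Lemma \ref{lem:inj}, with the height-one computation $E^r(1,1)$, which is Theorem \ref{thm:main} for $\BP{1}$. This requires two checks:
\begin{itemize}
\item The degree bound in the proof of Proposition \ref{prop:ssind} must place every possible target of a $d^r$ with $r\leq p$ below total degree $2p^{3}-1$. Here the relevant degrees are at most about $2p^2+2p$, which is well below $2p^3-1$, so injectivity applies.
\item Theorem \ref{thm:main} for $n=1$ needs $V(1)$, which exists for $p\geq 5$, and this is covered by $p\geq 7$.
\end{itemize}
With both checks in place, the differentials are forced exactly as stated, and the corollary follows.
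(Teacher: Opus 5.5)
Your proposal is correct and follows the paper's route: the corollary is exactly the case $n=2$, $i=1$ of Theorem \ref{thm:anyn}, with its exterior factor read as $E(\lambda_1,\lambda_{i+2},\cdots,\lambda_{n+1},\ldots)$ as you note. That theorem rests on Proposition \ref{prop:ssind} at $(2,1)$, proved by comparison with the height-one computation of Theorem \ref{thm:main}, and the checks you list — existence of $V(2)$ and $V(1)$ for $p\geq 7$, the degree bound below $2p^3-1$, and collapse at $E^{p+1}$ for filtration reasons — are the ingredients the paper uses.
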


More generally, let $J=\{j_1<\cdots<j_k\}$ be a subset of $\{1,\cdots,n\}$. The same argument, applied successively to the generators indexed by $J$, gives the following result.

\begin{thm}
	Suppose that the Smith--Toda complex $V(n)$ exists as a ring spectrum. Then, for $1\leq k\leq n$ there is an isomorphism of $\Ff$-modules
	\begin{equation*}
		\begin{split}
			& V(n)_*\THH(\BP{n},\langle v_j,j\in J\rangle)\\
			&\cong E(\lambda_r,\ d\mathrm{log}x_j|\ r\in \{1,\cdots,n+1\}\setminus\{j_1+1,\cdots,j_k+1\},\ j\in J)\otimes P(\mu_{n+1})\otimes \bigotimes_{j\in J}P_p([dx_j]),
		\end{split}
	\end{equation*}
	where $|d\mathrm{log}x_j|=1,|\mu_{n+1}|=2p^{n+1},|[dx_j]|=2p^j$, and $|\lambda_r|=2p^r-1$.
\end{thm}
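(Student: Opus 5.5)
We will imitate the iterated repletion argument of Theorem \ref{thm:anyn}, now repleting only the generators indexed by $J$, in increasing order $j_1<\cdots<j_k$. Set $L^J_{n,0}:=L_{n,0}=\Ff\otimes_{\BP{n}}\THH(\BP{n})$ and, inductively,
\[L^J_{n,s}:=L^J_{n,s-1}\otimes_{A_{j_s}}B_{j_s},\qquad 1\leq s\leq k.\]
By Lemma \ref{lem:gd} and Lemma \ref{lem:ass} applied to the prelog structure $\bar\alpha_J$, we get $L^J_{n,k}\simeq V(n)\otimes\THH(\BP{n},\langle v_j,j\in J\rangle)$. Each step gives a Tor spectral sequence $E^r(n,J,s)$ of $V(n)_*\THH(\BP{n})$-modules. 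As in Corollary \ref{cor:bpn-1}, the classes $x_{j_s}$ and $dx_{j_s}$ act trivially for degree and suspension reasons, so
\[E^2(n,J,s)\cong \pi_*L^J_{n,s-1}\otimes E(d\mathrm{log}x_{j_s})\otimes\Gamma([dx_{j_s}]).\]

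The claim to prove by induction on $s$ is the following. The only nonzero differentials are $V(n)_*\THH(\BP{n})$-linear extensions of
\[d^p(\gamma_k([dx_{j_s}])\,c)\doteq\lambda_{j_s+1}\,\gamma_{k-p}([dx_{j_s}])\,c,\]
where $c$ ranges over monomials in $d\mathrm{log}x_{j_t}$ and $[dx_{j_t}]^{a}$ with $t<s$ and $a<p$. This kills $\lambda_{j_s+1}$ and truncates $\Gamma([dx_{j_s}])$ to $P_p([dx_{j_s}])$. Since the $E^{p+1}$-page is concentrated in filtration $\leq p-1$, it equals $E^\infty$. After $k$ steps this yields the stated $\Ff$-module, with $\lambda_r$ surviving exactly for $r\notin\{j_1+1,\dots,j_k+1\}$. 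Note that $\lambda_{j_s+1}$ has not been killed at an earlier stage, because the $j_t$ are distinct.

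To establish the differentials we proceed by a double induction on $n$ and $s$, as in the proof of Proposition \ref{prop:ssind}.

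\emph{Case $j_s<n$.} We use the comparison map to height $n-1$ coming from $f_n$ (Lemma \ref{lem:lift} and the analogue of Lemma \ref{lem:fac} for the index set $J$). If $J\subseteq\{1,\dots,n-1\}$, the target is the $J$-repletion at height $n-1$, where the differentials are known by induction. Injectivity as in Lemma \ref{lem:inj} holds on monomials not involving $\lambda_{n+1}$ or $\mu_{n+1}$. The degree estimate $|c'|=|c|+2rp^{j_s}-1<2p^{n+1}-1$ for $r\leq p$ then goes through, since $j_s+1\leq n$ and $|c|$ is bounded by a sum over the earlier $j_t$. This forces the $d^p$ and rules out shorter differentials.

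\emph{Case $j_s=n$.} Here $s=k$ is the last step, and we argue as at the end of Proposition \ref{prop:ssind}. The action of $\Ff\otimes\THH(\mathrm{MU}[x_n])$ makes $E^r(n,J,k)$ a module spectral sequence over $E^r(v_n)$. Theorem \ref{thm:main} and Corollary \ref{cor:bpn-1} then give $d^p(\gamma_k[dx_n])\doteq\lambda_{n+1}\gamma_{k-p}[dx_n]$, and the module structure spreads this over all $c$.

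The main obstacle is the comparison step when $n\in J$ but $j_s<n$. The height-$(n-1)$ target cannot replete $v_n$, because $v_n\mapsto0$ under $f_n$. Our plan is to postpone the repletion of $v_n$: since the $j_t$ are taken in increasing order, $n$ is always the last index. Thus every intermediate stage with $j_s<n$ involves only $J\setminus\{n\}\subseteq\{1,\dots,n-1\}$, and the comparison is legitimate. We also need to check that the degree bound survives for arbitrary $J$; the worst case is $j_s=n-1$, which is exactly the case already handled in Proposition \ref{prop:ssind}.
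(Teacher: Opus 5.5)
Your proposal is correct and follows essentially the same route as the paper. The paper's proof simply reruns the argument of Proposition \ref{prop:ssind} successively for $j_1<\cdots<j_k$: at the $j_l$-th step the coefficient monomials are indexed by a subset of $\{1,\dots,j_l-1\}$, so the same degree bound, the comparison with height $n-1$, and (when $j_l=n$) the module structure over $E^r(v_n)$ all apply verbatim. Your explicit observation that ordering $J$ increasingly puts $n$ last, so that the height-$(n-1)$ comparison is only used at stages not involving $v_n$, is the point the paper leaves implicit.
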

\begin{proof}
	Apply the proof of Proposition \ref{prop:ssind} successively to $j_1,\cdots,j_k$. At the $j_l$-th step the coefficient monomials are indexed by a subset of $\{1,\cdots,j_l-1\}$, so the same degree estimate and differential argument apply verbatim.
\end{proof}

\section{Log THH of \texorpdfstring{$\BP{2}/v_1$}{BP<2>/v1}}\label{sec5}
In this section, we consider logarithmic topological Hochschild homology of $\BP{2}/v_1$. To this end, we first define a prelog structure on $\BP{2}/v_1$. Since $\BP{2}/v_1$ is only an $\Ee_1$-ring, the $\Ee_2$ even-cell decomposition of $\Ss[x]$ does not directly give rise to a map $\Ss[x]\to \BP{2}/x$. Instead, we follow the method of \cite[\S 6]{ABM22}.

Using the $\Ee_2$-augmentation $\Ss[x_1]\to \Ss$, we can identify $\mathbb{S}$ as an object of $\mathrm{LMod}_{\Ss[x_1]}(\mathrm{Alg}_{\Ee_1}(\mathrm{Sp}))$. Since the map $\bar{v}_1\colon \Ss[x_1]\to \BP{2}$ is a map of $\Ee_2$-rings, base change along $\bar{v}_1$ implies that
\[\BP{2}/v_1\simeq \BP{2}\otimes_{\Ss[x_1]}\Ss\in \mathrm{LMod}_{\BP{2}}(\mathrm{Alg}_{E_1}(\mathrm{Sp})).\]
In particular, the canonical map $\BP{2}\to\BP{2}/v_1$ is an $\Ee_1$-ring map. Precomposing with $\bar{v}_2$ gives a map $\bar{v}_2\colon\Ss[x_2]\to\BP{2}/v_1$, which gives a prelog structure on $\BP{2}/v_1$. Moreover, restriction of scalar along $\bar{v}_2$ shows that $\BP{2}/v_1\in \mathrm{LMod}_{\Ss[x_2]}(\mathrm{Alg}_{E_1}(\mathrm{Sp}))$. Hence $\THH(\BP{2}/v_1)$ is a $\THH(\Ss[x_2])$-module.
\begin{Def}\cite[Definition 6.6]{ABM22}
	The logarithmic topological Hochschild homology of $\BP{2}/v_1$ relative to $v_2$ is defined to be 
	\[\THH(\BP{2}/v_1,\langle v_2\rangle):=\THH(\BP{2}/v_1)\otimes_{\THH(\Ss[x_2])}\THH(\Ss[x_2^{\pm1}])_{\geq 0}.\]
\end{Def}
\begin{rem}
	It follows from the definition that there is a cofiber sequence
	\[\THH(\BP{2}/v_1)\to \THH(\BP{2}/v_1,\langle v_2\rangle)\to \Sigma\THH(\Zz_{(p)}).\]
	Indeed, this cofiber sequence can be obtained by tensoring the following cofiber sequence
	\[\THH(\BP{2})\to \THH(\BP{2},\langle v_2\rangle)\to \Sigma\THH(\BP{1})\]
	with $\THH(\BP{2}/v_1)$ over $\THH(\BP{2})$.
\end{rem}

The goal of this section is to computing the $V(2)$-homotopy groups of $\THH(\BP{2}/v_1,\langle v_2\rangle)$. We begin by compute the $V(2)$-homotopy groups of $\THH(\BP{2}/v_1)$. 
\begin{prop}\label{cor:bp/v}
	Let $p\geq 7$. There is an isomorphism 
	\[\THH_*(\BP{2}/v_1;\Ff)\cong E(\lambda_1,\lambda_3)\otimes P(\mu_3)\otimes P_p([dx_1]).\]
\end{prop}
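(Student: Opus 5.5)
The plan is to describe $\THH(\BP{2}/v_1;\Ff)=\Ff\otimes_{\BP{2}/v_1}\THH(\BP{2}/v_1)$ as a base change of $\THH_*(\BP{2};\Ff)$ along $\THH(\Ss[x_1])$, relative to $\Ss[x_1]$. Then I run a Tor spectral sequence modelled on Corollary \ref{cor:bpn-1}, with the index $n$ replaced by $1$ and $\BP{2}$ in place of $\BP{n}$. It is crucial that the coefficients are taken over $\BP{2}/v_1$ rather than over $\BP{2}$. Otherwise one picks up the extra factor $\Ff\otimes_{\BP{2}}\BP{2}/v_1\simeq \Ff\oplus\Sigma^{2p-1}\Ff$, which is the source of a class $[x_1]=\epsilon_1$ that must not appear in the final answer.

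\emph{Step 1 (identification).} Since $\BP{2}/v_1\simeq \BP{2}\otimes_{\Ss[x_1]}\Ss$ and $\THH$ is symmetric monoidal, we have $\THH(\BP{2}/v_1)\simeq \THH(\BP{2})\otimes_{\THH(\Ss[x_1])}\Ss$. Using Lemma \ref{lem:ass} twice, I would rewrite
\[
\Ff\otimes_{\BP{2}/v_1}\THH(\BP{2}/v_1)\simeq \Ff\otimes_{\Ff\otimes_{\Ss[x_1]}\THH(\Ss[x_1])}\bigl(\Ff\otimes_{\BP{2}}\THH(\BP{2})\bigr).
\]
The point is that the left unit $\Ss[x_1]\to\THH(\Ss[x_1])$ is absorbed by the base change $\BP{2}\to\BP{2}/v_1$. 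By Proposition \ref{prop:S[x]}, $H_*(\Ss\otimes_{\Ss[x_1]}\THH(\Ss[x_1]);\Ff)\cong E(dx_1)$. This gives a Tor spectral sequence
\[
E^2=\mathrm{Tor}^{E(dx_1)}_{*,*}\bigl(\Ff,\ \THH_*(\BP{2};\Ff)\bigr)\Longrightarrow \THH_*(\BP{2}/v_1;\Ff),
\]
where $\THH_*(\BP{2};\Ff)\cong E(\lambda_1,\lambda_2,\lambda_3)\otimes P(\mu_3)$ by \eqref{eq:BP}. As in the proof of Corollary \ref{cor:bpn-1}, $dx_1$ acts trivially, since it is the suspension of $x_1\mapsto v_1$ and $\sigma v_1=0$ in this range. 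Hence
\[
E^2\cong E(\lambda_1,\lambda_2,\lambda_3)\otimes P(\mu_3)\otimes\Gamma([dx_1]),
\]
with $[dx_1]$ in bidegree $(1,2p-1)$.

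\emph{Step 2 (differentials).} I claim $d^p(\gamma_k([dx_1]))\doteq\lambda_2\gamma_{k-p}([dx_1])$ for $k\geq p$, and that there are no other differentials. To get this I compare with the spectral sequence of Proposition \ref{prop:ssind} at $(n,i)=(2,1)$, which computes $V(2)_*\THH(\BP{2},\langle v_1\rangle)$, or better with its bottom-term analogue computing $V(2)_*\THH(\BP{2})\otimes_{\THH(\Ss[x_1])}\Ss$. That spectral sequence has $E^2$-term $E(\lambda_1,\lambda_2,\lambda_3)\otimes P(\mu_3)\otimes E([x_1])\otimes\Gamma([dx_1])$, and its differential $d^p\gamma_k([dx_1])\doteq\lambda_2\gamma_{k-p}([dx_1])$ is forced exactly as in Corollary \ref{cor:bpn-1}. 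The natural map induced by $\Ff\otimes_{\BP{2}}(-)\to\Ff\otimes_{\BP{2}/v_1}(-)$ goes in the direction that kills $[x_1]$. On the summand without $[x_1]$ it is an isomorphism of $E^2$-pages, so it transports the differentials.

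As a check independent of the comparison, I would redo the degree count directly. In total degree $2p^2-1$ the candidates are $\lambda_2$, $\lambda_1\gamma_{p-1}([dx_1])$ and $\gamma_p([dx_1])$ shifted appropriately. The classes $\lambda_1,\lambda_3,\mu_3$ are detected in $\THH_*(\Zz_{(p)};\Ff)$-type comparisons, and $\lambda_2$ must die, because $\sigma v_2$-type reasoning shows $\lambda_2$ maps to zero in $\THH(\BP{2}/v_1;\Ff)$. This forces $d^p\gamma_p([dx_1])\doteq\lambda_2$. Multiplicativity of this spectral sequence is obtained as in Proposition \ref{prop:n-1case}, or alternatively by module-linearity over $\THH_*(\BP{2};\Ff)$; together these give the remaining differentials.

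\emph{Step 3 (conclusion).} After $d^p$ we are left with
\[
E^{p+1}=E(\lambda_1,\lambda_3)\otimes P(\mu_3)\otimes P_p([dx_1]),
\]
which is concentrated in filtrations $\leq p-1$, so $E^{p+1}=E^\infty$. There is no multiplicative extension $[dx_1]^p=\mu_3$ here. That extension would come from $\mu_2$, which is absent, and degree reasons with $p\geq7$ rule out any hidden extension, so the answer holds as $\Ff$-modules. The main obstacle is Step 1 together with the comparison in Step 2: setting up the module structures so that the base change over $\BP{2}/v_1$ honestly removes $[x_1]$, and making sure the comparison map of spectral sequences exists and is compatible with the $\THH_*(\BP{2};\Ff)$-module structure. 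Without that compatibility the differential on $\gamma_k([dx_1])$ for $k>p$ cannot be pinned down.
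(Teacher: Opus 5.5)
\textbf{The main gap is in Step 2.} Your $E^2$-term, the $d^p$ pattern and the collapse are the right ones. But you import the differentials from the spectral sequence abutting to $V(2)_*\THH(\BP{2}/v_1)$, and there the differential $d^p\gamma_k([dx_1])\doteq\lambda_2\gamma_{k-p}([dx_1])$ is not ``forced exactly as in Corollary~\ref{cor:bpn-1}''. That argument works only because the abutment $V(n)_*\THH(\BP{n-1})$ is known in advance to be two-dimensional in degree $2p^{n+1}-1$, which forces $\lambda_{n+1}$ to be hit. Here the abutment is precisely the unknown.

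The paper gets the differential, for all $k\geq p$ and all coefficients, by mapping along $f_2\colon\BP{2}\to\BP{1}$ to the spectral sequence for $(\Ff\otimes_{\BP{2}}\THH(\BP{1}))\otimes_{\Ff\otimes\THH(\Ss[x_1])}\Ff$. The differentials of that spectral sequence come from the height-one case, and the paper then uses injectivity on the possible targets and $V(2)_*\THH(\BP{2})$-linearity, as in Proposition~\ref{prop:ssind}.

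Your fallback check does not replace this. First, $\lambda_3$ maps to zero in $\THH_*(\Zz_{(p)};\Ff)$, so it is not detected there. Second, the vanishing of $\lambda_2$ in $\THH_*(\BP{2}/v_1;\Ff)$ is exactly what the differential is meant to prove. Multiplicativity ``as in Proposition~\ref{prop:n-1case}'' is also unavailable: that proof used an $\Ee_3$-map $\mathrm{MU}[x_n]\to\BP{n}$, and the paper's remark shows there is no $\Ee_3$-map $\mathrm{MU}[x_1]\to\BP{2}$ with $x_1\mapsto v_1$. Proposition~\ref{prop:ssind} at $(2,1)$ is a legitimate input, but you do not say how its differential would be transferred to the spectral sequence you actually use.

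\textbf{Step 1 is also not established, and this is where your route departs from the paper's.} Write $B=\BP{2}/v_1$. Since $B$ is only an $\Ee_1$-ring, $\THH(B)$ has no natural $B$-module structure. So $\Ff\otimes_{B}\THH(B)$ is not defined; the correct object is $\Ff\otimes_{B\otimes B^{\mathrm{op}}}B$. Since $\Ss[x_1]$ is only $\Ee_2$, the map $\Ss[x_1]\to\THH(\Ss[x_1])$ is in general only $\Ee_1$. Nothing shows that $\Ff\otimes(\Ss\otimes_{\Ss[x_1]}\THH(\Ss[x_1]))$ is a ring over which $\Ff\otimes_{\BP{2}}\THH(\BP{2})$ is a module. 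Lemma~\ref{lem:ass} does not supply these structures, so neither your Tor spectral sequence over $E(dx_1)$ nor the comparison map into it is yet available.

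The paper sidesteps this entirely. It takes coefficients only over the $\Ee_3$-ring $\BP{2}$ and runs the Tor spectral sequence over the $\Ee_1$-ring $\Ff\otimes\THH(\Ss[x_1])$. This gives $V(2)_*\THH(\BP{2}/v_1)\cong E(\lambda_1,\lambda_3,\epsilon_1)\otimes P(\mu_3)\otimes P_p([dx_1])$, where $\epsilon_1$ comes from $[x_1]$. It then removes $E(\epsilon_1)$ using the bimodule splitting $V(2)\otimes\BP{2}/v_1\simeq\Ff\oplus\Sigma^{2p-1}\Ff$. That splitting gives \eqref{eq:split}, and the answer follows by comparing dimensions degreewise. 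To keep your direct approach, you need an independent construction of the $E(dx_1)$-Tor spectral sequence for $\THH(\BP{2}/v_1;\Ff)$; otherwise this bimodule splitting is the missing idea.
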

\begin{proof}
	By Lemma \ref{lem:ass}, there is an equivalence
	\[(\Ff\otimes_{\BP{n}}\THH(\BP{2}))\otimes_{\Ff\otimes\THH(\Ss[x_1])}\Ff\simeq \Ff\otimes_{\BP{n}}\THH(\BP{2}/v_1)\simeq V(2)_*\THH(\BP{2}).\]
	Hence there is a spectral sequence 
	\[E^2_{*,*}(\BP{2}/v_1)\cong \mathrm{Tor}^{E(dx_1)\otimes P(x_1)}(E(\lambda_1,\lambda_2,\lambda_3)\otimes P(\mu_3),\Ff)\Longrightarrow V(2)_*\THH(\BP{2}/v_1),\]
	whose $E^2$-page is isomorphic to $E(\lambda_1,\lambda_2,\lambda_3)\otimes P(\mu_3)\otimes E([x_1])\otimes\Gamma([dx_1]),$ by a similar argument as the proof of Corollary \ref{cor:bpn-1}. The same reason as that in the proof of Proposition \ref{prop:ssind} applied to the map
	\[(\Ff\otimes_{\BP{2}}\THH(\BP{2}))\otimes_{\Ff\otimes\THH(\Ss[x_1])}\Ff\to \Ff\otimes_{\BP{2}}\THH(\BP{1}))\otimes_{\Ff\otimes\THH(\Ss[x_1])}\Ff \]
	implies that the only nonzero differentials are given by $d^p(\gamma_k([dx_1]c)=\lambda_2\gamma_{k-p}([dx_1])c$, for $k\geq p$ and $c\in E(\lambda_1,\lambda_2,\lambda_3)\otimes P(\mu_3)\otimes E([x_1]).$ Consequently, we deduce that
	\[V(2)_*\THH(\BP{2}/v_1)\cong E(\lambda_1,\lambda_3,\epsilon_1)\otimes P(\mu_3)\otimes P_p([dx_1]).\]
	Since $V(2)\otimes\BP{2}/v_1$ splits as a $\BP{2}/v_1$-bimodule into $\Ff\oplus\Sigma^{2p-1}\Ff$, we deduce that there is an equivalence of $\THH(\BP{2};\Ff)$-modules
	\begin{equation}\label{eq:split}
	\begin{split}
		V(2)\otimes\THH(\BP{2}/v_1) & \simeq (V(2)\otimes\BP{2}/v_1)\otimes_{\BP{2}/v_1\otimes(\BP{2}/v_1)^{\mathrm{op}}} \BP{2}/v_1\\
		& \simeq \THH(\BP{2}/v_1;\Ff)\oplus\Sigma^{2p-1}\THH(\BP{2}/v_1;\Ff).
	\end{split}
	\end{equation}
	On the other hand, the canonical map $V(2)\otimes\THH(\BP{2})\to V(2)\otimes\THH(\BP{2}/v_1)$ factors as
	\begin{equation*}
		\begin{split}
			& V(2)\otimes\THH(\BP{2}) \simeq (V(2)\otimes\BP{2})\otimes_{\BP{2}\otimes\BP{2}^{\mathrm{op}}} \BP{2} \\
			& \longrightarrow (V(2)\otimes\BP{2}/v_1)\otimes_{\BP{2}/v_1\otimes(\BP{2}/v_1)^{\mathrm{op}}} \BP{2}/v_1\simeq V(2)\otimes\THH(\BP{2}/v_1).
		\end{split}
	\end{equation*}
	Tensoring the cofiber sequence $\Sigma^{2p-2}\BP{2}\xrightarrow{v_1}\BP{2}\to\BP{2}/v_1$ with $V(2)$, we obtain a cofiber sequence 
	\[\Sigma^{2p-2}(V(2)\otimes\BP{2})\xrightarrow{v_1} V(2)\otimes\BP{2}\to V(2)\otimes\BP{2}/v_1.\]
	Since $V(2)\otimes\BP{2}\simeq \Ff$ and $v_1$ acts trivially after tensoring with $V(2)$, the first map is null. Hence the above cofiber sequence can be identified with
	\[\Sigma^{2p-2}\Ff\xrightarrow{0} \Ff\xrightarrow{\mathrm{id}_{\Ff}\oplus\  0}\Ff\oplus\Sigma^{2p-1}\Ff.\]
	Together with the equivalence \eqref{eq:split}, we see that the canonical map $V(2)\otimes\THH(\BP{2})\to V(2)\otimes\THH(\BP{2}/v_1)$ factors as
	\[V(2)\otimes\THH(\BP{2})\to \THH(\BP{2}/v_1;\Ff)\to V(2)\otimes\THH(\BP{2}/v_1).\]
	Therefore, $\THH_*(\BP{2}/v_1;\Ff)$ contains the image of $V(2)_*\THH(\BP{2})\simeq \THH_*(\BP{n};\Ff)$. Since the second map above is a map of $\THH(\BP{2};\Ff)$-modules, the desired result follows from the equivalence \eqref{eq:split}.
\end{proof}

Since $\BP{2}/v_1\in \mathrm{LMod}_{\BP{2}}(\mathrm{Alg}_{E_1}(\mathrm{Sp}))$ and $\BP{2}$ is an $\Ee_3$-ring, the same argument as in the proof of Proposition \ref{prop:eqV(n)} implies that 
\begin{equation}\label{eq:bp/v}
	\Ff\otimes_{\Ff\otimes\THH(\Ss[x_2])}\Ff\otimes_{\BP{2}}\THH(\BP{2}/v_1)\simeq V(2)\otimes\THH(\Zz_{(p)}).
\end{equation}
Again, to compute the spectral sequences, we need a module structure on this spectrum.

\begin{lem}\label{lem:mod3}
	The spectrum
	\[\Ff\otimes_{\Ff\otimes\THH(\Ss[x_2])}\Ff\otimes_{\BP{2}}\THH(\BP{2}/v_1)\]
	is a left $\Ff\otimes_{\Ff\otimes\THH(\Ss[x_2])}\Ff\otimes_{\BP{2}}\THH(\BP{2})$-module.
\end{lem}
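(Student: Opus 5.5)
The idea is to show that the base change along $\Ff\otimes\THH(\Ss[x_2])\to\Ff$ is a functor of ring-module pairs. It then sends the ring $\Ff\otimes_{\BP{2}}\THH(\BP{2})$ and its module $\Ff\otimes_{\BP{2}}\THH(\BP{2}/v_1)$ to a ring and a left module over it. The argument runs parallel to the proofs of Proposition \ref{prop:n-1case} and Proposition \ref{prop:ssind}, case $i=n$.

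\textbf{Step 1.} I first produce the module structure before base change. Since $\BP{2}/v_1\in \mathrm{LMod}_{\BP{2}}(\mathrm{Alg}_{\Ee_1}(\mathrm{Sp}))$ and $\THH$ is symmetric monoidal, $\THH(\BP{2}/v_1)$ is a left module over the $\Ee_2$-ring $\THH(\BP{2})$. Extending scalars along the $\Ee_3$-ring map $\BP{2}\to\Ff$ makes $\Ff\otimes_{\BP{2}}\THH(\BP{2}/v_1)$ a left module over the $\Ee_2$-ring $R:=\Ff\otimes_{\BP{2}}\THH(\BP{2})$. The $\Ff\otimes\THH(\Ss[x_2])$-module structure on $\Ff\otimes_{\BP{2}}\THH(\BP{2}/v_1)$ used in \eqref{eq:bp/v} comes from $\bar v_2$, which factors through $\BP{2}\to\BP{2}/v_1$. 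So this action is the restriction of the $R$-action along the $\Ee_2$-map $\Ff\otimes\THH(\Ss[x_2])\to R$ constructed in Proposition \ref{prop:n-1case}.

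\textbf{Step 2.} Next I replace $\Ss[x_2]$ by $\mathrm{MU}[x_2]$. As in Proposition \ref{prop:n-1case}, I use our choice of $\bar v_2$ to rewrite both spectra in the statement as relative tensor products over $\Ff\otimes\THH(\mathrm{MU}[x_2])$ with $\Ff\otimes\THH(\mathrm{MU})$, via the $\Ee_\infty$-augmentation $\mathrm{MU}[x_2]\to\mathrm{MU}$. The span
\[\Ff\otimes\THH(\mathrm{MU})\leftarrow \Ff\otimes\THH(\mathrm{MU}[x_2])\to R\]
consists of $\Ee_2$-ring maps, by the proof of Proposition \ref{prop:n-1case}. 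Hence
\[R':=\Ff\otimes\THH(\mathrm{MU})\otimes_{\Ff\otimes\THH(\mathrm{MU}[x_2])}R\]
is an $\Ee_1$-ring.

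\textbf{Step 3.} Finally I form the module. The bar construction $B(-,-,-)$ is lax symmetric monoidal in the triple (module, ring, module), as already used at the end of the proof of Proposition \ref{prop:ssind}. Applied to the triple
\[\bigl(\Ff\otimes\THH(\mathrm{MU}),\ \Ff\otimes\THH(\mathrm{MU}[x_2]),\ M\bigr),\qquad M:=\Ff\otimes_{\BP{2}}\THH(\BP{2}/v_1),\]
viewed as a module over the corresponding triple for $R$, it makes $\Ff\otimes\THH(\mathrm{MU})\otimes_{\Ff\otimes\THH(\mathrm{MU}[x_2])}M$ a left $R'$-module. Undoing the identification of Step 2 gives the claim.

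\textbf{Main obstacle.} The delicate point is Step 3. The left $R$-action on $M$ must be compatible with the $\Ff\otimes\THH(\mathrm{MU}[x_2])$-action, that is, $M$ must be a module over $R$ in the category of $\Ff\otimes\THH(\mathrm{MU}[x_2])$-modules. Establishing this needs that $\Ff\otimes\THH(\mathrm{MU}[x_2])\to R$ lands centrally, i.e.\ is an $\Ee_2$-map. That is exactly what Proposition \ref{prop:n-1case} supplies, so the check reduces to confirming that the action from Step 1 is the restricted one.
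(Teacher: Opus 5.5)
Your proposal is correct and follows essentially the same route as the paper. The paper also uses the span of $\Ee_2$-rings $\Ff\otimes\THH(\mathrm{MU})\leftarrow\Ff\otimes\THH(\mathrm{MU}[x_2])\to\Ff\otimes_{\BP{2}}\THH(\BP{2})$, regards $\Ff\otimes_{\BP{2}}\THH(\BP{2}/v_1)$ as a left $\Ff\otimes_{\BP{2}}\THH(\BP{2})$-module inside $\mathrm{LMod}_{\Ff\otimes\THH(\mathrm{MU}[x_2])}(\mathrm{Sp})$ because $\bar v_2$ is an $\Ee_2$-map, and then base changes along the augmentation. Your phrasing of the last step via the bar construction on triples, and your explicit check that the $\THH(\Ss[x_2])$-action is restricted from the $\THH(\BP{2})$-action, are a repackaging of the same steps with slightly more detail spelled out.
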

\begin{proof}
	Consider the span of $\Ee_2$-rings
	\[\Ff\otimes\THH(\mathrm{MU})\xleftarrow{\eta}\Ff\otimes\THH(\mathrm{MU}[x_2])\xrightarrow{\bar{v}_2} \Ff\otimes_{\BP{2}}\THH(\BP{2}).\]
	Since $\Ff\otimes_{\BP{2}}\THH(\BP{2}/v_1)$ is a left $\Ff\otimes_{\BP{2}}\THH(\BP{2})$-module and $\bar{v}_2$ is a map of $\Ee_2$-rings, we can consider $\Ff\otimes_{\BP{2}}\THH(\BP{2}/v_1)$ as a left $\Ff\otimes_{\BP{2}}\THH(\BP{2})$-module in the category $\mathrm{LMod}_{\Ff\otimes\THH(\mathrm{MU}[x_2])}(\mathrm{Sp})$, by the equivalence (see \cite[Corollary 3.4.1.9]{HA})
	\[\mathrm{LMod}_{\Ff\otimes_{\BP{2}}\THH(\BP{2})}(\mathrm{Sp})\simeq \mathrm{LMod}_{\Ff\otimes_{\BP{2}}\THH(\BP{2})}(\mathrm{LMod}_{\Ff\otimes\THH(\mathrm{MU}[x_2])}(\mathrm{Sp})).\]
	Base change along $\eta$ implies that $\Ff\otimes_{\Ff\otimes\THH(\Ss[x_2])}\Ff\otimes_{\BP{2}}\THH(\BP{2}/v_1)$ is a left $\Ff\otimes_{\Ff\otimes\THH(\Ss[x_2])}\Ff\otimes_{\BP{2}}\THH(\BP{2})$-module.
\end{proof}

\begin{thm}
	Let $p\geq7$. There is an isomorphism
	\[V(2)_*\THH(\BP{2}/v_1,\langle v_2\rangle)\cong E(\lambda_1,\epsilon_1,d\mathrm{log}x_2)\otimes P(\mu_3)\otimes P_p([dx_1])\otimes P_p([dx_2]).\]
\end{thm}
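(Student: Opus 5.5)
The computation will mirror Theorem \ref{thm:main}, with $V(2)_*\THH(\BP{2}/v_1)$ in place of $V(2)_*\THH(\BP{2})$.

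\textbf{Step 1: the cospan of Tor spectral sequences.} Write $M:=\Ff\otimes_{\BP{2}}\THH(\BP{2}/v_1)$. By Lemma \ref{lem:ass} and the definition of $\THH(\BP{2}/v_1,\langle v_2\rangle)$,
\[V(2)\otimes\THH(\BP{2}/v_1,\langle v_2\rangle)\simeq M\otimes_{A_2}B_2 .\]
I will consider the cospan
\[
M\otimes_{A_2}B_2\;\longrightarrow\; M\otimes_{A_2}\bigl(\Ff\otimes\THH(\Ss[x_2^{\pm1}])_{=0}\bigr)\;\longleftarrow\; M\otimes_{A_2}\Ff ,
\]
which is the analogue of \eqref{eq:cospan}. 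By \eqref{eq:bp/v}, the bottom term is $V(2)\otimes\THH(\Zz_{(p)})$.

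\textbf{Step 2: the $E^2$-pages.} From the first half of the proof of Proposition \ref{cor:bp/v},
\[\pi_*M\cong E(\lambda_1,\lambda_3,\epsilon_1)\otimes P(\mu_3)\otimes P_p([dx_1]).\]
The classes $x_2$ and $dx_2$ act trivially on $\pi_*M$. For $x_2$ this holds for degree reasons, as in Corollary \ref{cor:bpn-1}. For $dx_2$, the action factors through $V(2)_*\THH(\BP{2})$, where $dx_2$ maps to zero. The three $E^2$-pages are therefore
\begin{gather*}
\pi_*M\otimes E(d\mathrm{log}x_2)\otimes\Gamma([dx_2]),\\
\pi_*M\otimes E(d\mathrm{log}x_2)\otimes E([x_2])\otimes\Gamma([dx_2]),\\
\pi_*M\otimes E([x_2])\otimes\Gamma([dx_2]).
\end{gather*}
Both maps in the cospan are injective.

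\textbf{Step 3: differentials in the bottom spectral sequence.} By Lemma \ref{lem:mod3}, the bottom spectral sequence is a module over the multiplicative spectral sequence $E^r(\BP{1})$ of Corollary \ref{cor:bpn-1} with $n=2$. In that spectral sequence
\[d^p(\gamma_k([dx_2]))\doteq\lambda_3\,\gamma_{k-p}([dx_2]).\]
The module structure then gives
\[d^p(\gamma_k([dx_2])c)\doteq\lambda_3\,\gamma_{k-p}([dx_2])c\]
for $c\in\pi_*M$, provided $c$ is a $d^r$-cycle for $r\le p$. I expect that proviso to hold for degree and filtration reasons: $\pi_*M$ sits in filtration $0$, so classes there cannot support differentials.

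Two consistency checks are needed. First, no earlier $d^r$ occurs. Second, the result is compatible with the known answer
\[V(2)_*\THH(\Zz_{(p)})\cong E(\epsilon_1,\epsilon_2,\lambda_1)\otimes P(\mu_1),\]
obtained from Bökstedt's computation via the Bockstein sequences. For the second check, the $E^{p+1}$-page $E(\lambda_1,\epsilon_1,[x_2])\otimes P(\mu_3)\otimes P_p([dx_1])\otimes P_p([dx_2])$ should have the same Poincaré series. Here $[dx_1]$ and $[dx_2]$ play the roles of $\mu_1$ and $\mu_1^p$, and $\mu_3$ plays the role of $\mu_1^{p^2}$. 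Checking this count will be the main obstacle. It is where $p\ge7$ enters, in ruling out stray differentials between classes such as $\lambda_3$, $\epsilon_1[dx_1]^{p-1}$ and $\lambda_1[dx_1]^{p-1}$. I would run the same dimension comparison in total degree $2p^3-1$ as in Corollary \ref{cor:bpn-1}. There the target $\lambda_3$ is forced, because the other candidates lie in filtrations that no $d^r$ with $r\ge2$ can reach.

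\textbf{Step 4: transfer to the top spectral sequence.} The class $d\mathrm{log}x_2$ survives to $E^p$ for degree reasons. By naturality and injectivity, the same $d^p$ appears in the middle and top spectral sequences. The top spectral sequence then has
\[E^{p+1}\cong E(\lambda_1,\epsilon_1,d\mathrm{log}x_2)\otimes P(\mu_3)\otimes P_p([dx_1])\otimes P_p([dx_2]).\]
This page is concentrated in filtration at most $p-1$, so the spectral sequence collapses there. We only assert an isomorphism of $\Ff$-modules, so no extension problems arise and the theorem follows.
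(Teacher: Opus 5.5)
Your proposal is essentially the paper's proof. The $d^p$-differentials in the Tor spectral sequence for $V(2)\otimes\THH(\Zz_{(p)})$ come from Corollary \ref{cor:bpn-1} via the $E^r(\BP{1})$-module structure of Lemma \ref{lem:mod3}, and the result is transferred to the top term of the cospan exactly as in Theorem \ref{thm:main}. The one difference is that you apply the Leibniz rule directly to filtration-zero generators, whereas the paper first splits into $E^r(0)\oplus E^r(1)$ via \eqref{eq:split}.

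That Leibniz argument already excludes $d^r$ for $r<p$, and, since the resulting $E^{p+1}$-page lies in filtration at most $p$, for $r>p$ as well. So the Poincar\'e-series comparison is only a consistency check rather than the main obstacle. Also, $p\geq 7$ enters solely through the existence of $V(2)$ as a ring spectrum, not through ruling out stray differentials.
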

\begin{proof}
	The computation proceeds in the same way as in Section \ref{sec4}. First, by the equivalence \eqref{eq:bp/v}, there is a Tor spectral sequence $E^r_{*,*}(\Zz_{(p)})$ computing $V(2)_*\THH(\Zz_{(p)})$ whose $E^2$-page is given by
	\begin{equation*}
	\begin{split}
		E^2_{*,*}(\Zz_{(p)}) & \cong\mathrm{Tor}^{H_*(\THH(\Ss[x_2]);\Ff)}(\Ff,V(2)_*\THH(\BP{2}/v_1))\\
		& \cong E(\lambda_1,\lambda_3,\epsilon_1)\otimes P(\mu_3)\otimes P_p([dx_1])\otimes E([x_2])\otimes\Gamma([dx_2]).
	\end{split}
	\end{equation*}
	Using the equivalence \eqref{eq:split}, we see that this spectral sequence decomposes as the direct sum of two spectral sequences, which are denoted by $E^r_{*,*}(0)$ and $E^r_{*,*}(1)$, respectively.	Moreover, the canonical map $\BP{2}\to\BP{2}/v_1$ induces a map of spectral sequences $E^r_{*,*}(\BP{1})\to E^r_{*,*}(\Zz_{(p)})$, which factors through $E^r_{*,*}(0)$ by the proof of Proposition \ref{cor:bp/v}. On the $E^2$-page, we can identify the map $E^2_{*,*}(\BP{1})\to E^2_{*,*}(0)$ with
	\[E(\lambda_1,\lambda_2,\lambda_3)\otimes P(\mu_3)\otimes E([x_2])\otimes \Gamma([dx_2])\to E(\lambda_1,\lambda_3)\otimes P(\mu_3)\otimes P_p([dx_1])\otimes E([x_2])\otimes\Gamma([dx_2]),\]
	which sends $\lambda_2$ to zero and sends all the other classes to the corresponding classes denoted by the same symbols. For degree reasons, $\lambda_3$ survives to the $E^p$-page. By naturality and a degree argument, $\gamma_i([dx_2])$ also survives to the $E^p$-page for all $i\ge 0$. Hence we obtain that there are differentials
	\[d^p(\gamma_{i}([dx_2])=\lambda_3\cdot\gamma_{i-p}([dx_2]),\ \ i\geq p\]
	in $E^r_{*,*}(0)$ by Corollary \ref{cor:bpn-1}. By Lemma \ref{lem:mod3}, the map of spectral sequences $E^r_{*,*}(\BP{1})\to E^r_{*,*}(\Zz_{(p)})$ is $E^r_{*,*}(\BP{1})$-linear, and so is $E^r_{*,*}(\BP{1})\to E^r_{*,*}(0)$. A similar argument shows that there are differentials 
	\[d^p(\epsilon_1\cdot\gamma_i([dx_2]))=\epsilon_1\cdot\lambda_3\cdot\gamma_{i-p}([dx_2]),\ \ i\geq p\]
	in $E^r_{*,*}(1)$. Again, Lemma \ref{lem:mod3} implies that the differential in $E^r_{*,*}(1)$ is $E^r_{*,*}(\BP{1})$-linear. Therefore, after taking the above differentials and $E^r_{*,*}(\BP{1})$-linearity into account, the resulting page has the same dimension as $V(2)_*\THH(\Zz_{(p)})$ in each degree. Since the spectral sequence is degreewise finite dimensional and converges to $V(2)_*\THH(\Zz_{(p)})$, there is no room for any further nonzero differential. Hence the above differentials, together with $E^r_{*,*}(\BP{1})$-linearity, gives all nontrivial differentials in $E^r_{*,*}(\Zz_{(p)})$. The same argument as in the proof of Theorem \ref{thm:main} gives the desired result.
\end{proof}

\section{Maps in the repletion-residue cofiber sequences}\label{sec6}

Recall from \cite[Theorem 1.4]{RSS25} that there are repletion-residue cofiber sequences
\begin{equation}\label{seq1}
	\THH(\BP{n})\xrightarrow{\rho_n} \THH(\BP{n},\langle v_n\rangle)\xrightarrow{\mathrm{res}_n}\Sigma\THH(\BP{n-1}),
\end{equation}
and
\begin{equation}\label{seq2}
	\THH(\BP{2}/v_1)\xrightarrow{\bar{\rho}_2} \THH(\BP{2}/v_1,\langle v_2\rangle)\xrightarrow{\overline{\mathrm{res}}_2}\Sigma\THH(\Zz_{(p)}).
\end{equation}
The preceding sections determine the $V(n)$-homotopy groups of terms in these cofiber sequences. In this section, we determine the homomorphisms between them. We first treat the cofiber sequence associated to the single generator $v_n$. We then specialize to height two and compare it with the corresponding sequence for $\BP{2}/v_1$.

\subsection{The cofiber sequence associated to \texorpdfstring{$\langle v_n\rangle$}{<v_n>}}
For convenience, we denote $\mathrm{res}_n$ by $\partial_n$. Let
\[\tau_n\colon\THH(\BP{n-1})\longrightarrow\THH(\BP{n})\]
denote the fiber map of $\rho_n$. The maps $\rho_n$, $\partial_n$, and
$\tau_n$ are known as the localization, residue, and transfer maps, respectively. The following result is the analogue of \cite[Lemma 7.4]{RSS18}.

\begin{prop}\label{prop:seq1}
	Suppose that the Smith--Toda complex $V(n)$ exists as a ring spectrum. Then the long exact sequence
	\begin{multline*}
	\cdots\longrightarrow V(n)_*\THH(\BP{n-1}) \xrightarrow{(\tau_n)_*}V(n)_*\THH(\BP{n}) \xrightarrow{(\rho_n)_*} V(n)_*\THH(\BP{n},\langle v_n\rangle)\\
	\xrightarrow{(\partial_n)_*}V(n)_{*-1}\THH(\BP{n-1} \longrightarrow\cdots
\end{multline*}
    is a sequence of $V(n)_*\THH(\BP{n})$-modules, and $(\rho_n)_*$ is an algebra homomorphism. With the generators chosen in Theorem \ref{thm:main} and Equation \eqref{eq:bpn-1}, the localization map satisfies
    \[(\rho_n)_*(\lambda_i)=\lambda_i,\qquad (\rho_n)_*(\lambda_{n+1})=0,\qquad (\rho_n)_*(\mu_{n+1})\doteq\kappa_n^p,\]
    for $1\leq i\leq n$. The residue map satisfies
    \begin{alignat*}{2}
    (\partial_n)_*(\kappa_n^k)
    &= 0, &\qquad& p\mid k, \\
    (\partial_n)_*(\kappa_n^k)
    &\doteq \epsilon_n\mu_n^{k-1} \pmod{\lambda_n\mu_n^{k-1}}, && p\nmid k,\quad k\geq 1, \\
    (\partial_n)_*(d\log x_n\cdot \kappa_n^k)
    &\doteq \mu_n^k, && k\geq 0.
    \end{alignat*}
    Finally, the transfer map satisfies
    \begin{alignat*}{2}
    (\tau_n)_*(\mu_n^q)
    &= 0, &\qquad& q\geq 0, \\
    (\tau_n)_*(\epsilon_n\mu_n^q)
    &= 0, && q\not\equiv p-1 \pmod p, \\
    (\tau_n)_*(\epsilon_n\mu_n^{pr-1})
    &\doteq \lambda_{n+1}\mu_{n+1}^{r-1}, && r\geq 1.
    \end{alignat*}
    Together with $V(n)_*\THH(\BP{n})$-linearity, these formulas determine all three homomorphisms.
\end{prop}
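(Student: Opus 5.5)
The plan is to mirror the proof of \cite[Lemma 7.4]{RSS18}, using the cofiber sequence \eqref{seq1} and the spectral sequence comparisons already set up in Section \ref{sec4}.

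\emph{Module structures.} First I would establish that \eqref{seq1} is a cofiber sequence of $\THH(\BP{n})$-modules. It is obtained by tensoring the $\THH(\Ss[x_n])$-module cofiber sequence
\[\THH(\Ss[x_n])\to\THH(\Ss[x_n^{\pm1}])_{\geq0}\to\Sigma\Ss\]
(the residue sequence for $\Ss[x_n]$) with $\THH(\BP{n})$ over $\THH(\Ss[x_n])$. The identification of the last term with $\Sigma\THH(\BP{n-1})$ uses Remark \ref{rem:pi(a)}. After applying $\Ff\otimes_{\BP{n}}-$, the resulting long exact sequence consists of $V(n)_*\THH(\BP{n})$-modules. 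The map $\rho_n$ is the unit of the base change along $\THH(\Ss[x_n])\to \THH(\Ss[x_n^{\pm1}])_{\geq0}$. The argument of Proposition \ref{prop:n-1case}, passing through $\mathrm{MU}[x_n]$ and $\mathrm{MU}[x_n^{\pm1}]$, shows it is a ring map, so $(\rho_n)_*$ is an algebra map.

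\emph{The localization map.} I would compute $(\rho_n)_*$ on the $E^2$-pages of the Tor spectral sequences. The source $V(n)_*\THH(\BP{n})$ sits in filtration $0$ of the top spectral sequence of \eqref{eq:cospan}, so $\lambda_i\mapsto\lambda_i$ and $\mu_{n+1}\mapsto\mu_{n+1}$. The class $\lambda_{n+1}$ is a $d^p$-boundary, namely $d^p(\gamma_p[dx_n])$, so its image is zero. Finally, $\mu_{n+1}$ is detected as $[dx_n]^p$ by the multiplicative extension of Corollary \ref{cor:bpn-1}, transported along the injective cospan maps. This gives $(\rho_n)_*(\mu_{n+1})\doteq\kappa_n^p$.

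\emph{Residue and transfer.} I would use exactness together with a dimension count. Since $(\rho_n)_*$ is now known, its image is $E(\lambda_1,\dots,\lambda_n)\otimes P(\kappa_n^p)$. Hence $(\partial_n)_*$ vanishes on $\kappa_n^{pk}$ and is injective on a complement, spanned by $\kappa_n^k$ with $p\nmid k$ and by $d\log x_n\cdot\kappa_n^k$.

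To pin down the values, I would interpret $\partial_n$ at the level of the Tor spectral sequences. It is induced by the map $\THH(\Ss[x_n^{\pm1}])_{\geq0}\to\Sigma\Ss$, which in homology sends $d\log x_n\mapsto 1$ and kills $P(x_n)_{>0}$. Composed with the middle term of \eqref{eq:cospan}, this shows:
\begin{itemize}
\item $d\log x_n\cdot[dx_n]^k$ maps to $[dx_n]^k$, which represents $\mu_n^k$;
\item $[dx_n]^k$ maps to a class in filtration degree $\le k$ of degree $2kp^n-1$.
\end{itemize}
By injectivity and the degree count in $V(n)_*\THH(\BP{n-1})$ (equation \eqref{eq:bpn-1}), the latter must be $\epsilon_n\mu_n^{k-1}$ modulo $\lambda_n\mu_n^{k-1}$. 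This is represented by $[x_n][dx_n]^{k-1}$, matching Corollary \ref{cor:bpn-1}.

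The transfer $(\tau_n)_*$ is then forced by exactness. The image of $(\partial_n)_*$ is its kernel, and this image contains all $\mu_n^q$ and all $\epsilon_n\mu_n^q$ with $q\not\equiv p-1\pmod p$. The cokernel is spanned by the $\epsilon_n\mu_n^{pr-1}$. Since $\ker(\rho_n)_*$ is the ideal generated by $\lambda_{n+1}$, and degrees match, we get $(\tau_n)_*(\epsilon_n\mu_n^{pr-1})\doteq\lambda_{n+1}\mu_{n+1}^{r-1}$. Linearity over $V(n)_*\THH(\BP{n})$ then determines all three maps.

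\emph{Expected obstacle.} The hardest step is the identification of $(\partial_n)_*(\kappa_n^k)$. The residue is not multiplicative, so I would rely on the $V(n)_*\THH(\BP{n})$-module structure and on filtration-jump arguments. I would also need to check that no hidden extension shifts $[x_n][dx_n]^{k-1}$ by a multiple of $\lambda_n\mu_n^{k-1}$ in a way that breaks exactness. This is the reason the statement is phrased modulo $\lambda_n\mu_n^{k-1}$.
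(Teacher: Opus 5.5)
Your proposal is correct and follows essentially the paper's argument: you get $(\rho_n)_*$ from the edge homomorphism, from $\lambda_{n+1}$ being a $d^p$-boundary, and from the extension $[dx_n]^p=\mu_{n+1}$, and then $(\partial_n)_*$ and $(\tau_n)_*$ are forced by exactness, degree counting in $V(n)_*\THH(\BP{n-1})$, and $V(n)_*\THH(\BP{n})$-linearity. The one difference is your extra identification of the residue on the Tor $E^2$-pages, which the paper skips; it pins down $(\partial_n)_*(\kappa_n^k)$ modulo $\lambda_n\mu_n^{k-1}$ by observing that $(\partial_n)_*(\kappa_n^k)$ and $(\partial_n)_*(\lambda_n\,d\log x_n\,\kappa_n^{k-1})\doteq\lambda_n\mu_n^{k-1}$ must be linearly independent, and this is exactly the step your ``injectivity and degree count'' needs to make explicit.
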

\begin{proof}
   The map $(\rho_n)_*$ is induced by the canonical map from ordinary THH to log THH. It factors through the edge homomorphism of the Tor spectral sequence in the proof of Theorem \ref{thm:main}. The class $\lambda_{n+1}$ is the boundary of $\gamma_p([dx_n])$, while $\mu_{n+1}$ detects the multiplicative extension $\kappa_n^p$. Hence
   \[\mathrm{ker}((\rho_n)_*)=E(\lambda_1,\cdots,\lambda_n)\otimes P(\mu_{n+1})\{\lambda_{n+1}\}\]
   and
   \[\mathrm{im}((\rho_n)_*)=E(\lambda_1,\cdots,\lambda_n)\otimes P(\kappa_n^p).\]
   Set $D_n=E(\lambda_1,\cdots,\lambda_n)\otimes P(\mu_{n+1}).$ Then $\mu_{n+1}$ acts on $V(n)_*\THH(\BP{n},\langle v_n\rangle)$ by multiplication by $\kappa_n^p$ and on $V(n)_*\THH(\BP{n-1})$ by multiplication by $\mu_n^p$, by \cite[Proposition 2.2.2]{AHW24}.
   Consequently, $V(n)_*\THH(\BP{n},\langle v_n\rangle)$ and $V(n)_*\THH(\BP{n-1})$ are free as $D_n$-modules
   \[V(n)_*\THH(\BP{n},\langle v_n\rangle) \cong D_n\{\kappa_n^a,d\mathrm{log}x_n\cdot\kappa_n^a \ | \ 0\leq a<p\}\]
   and
   \[V(n)_*\THH(\BP{n-1}) \cong D_n\{\mu_n^a,\epsilon_n\cdot\mu_n^a \ | \ 0\leq a<p\}.\]
   A degree argument then implies that
   \[(\partial_n)_*(d\mathrm{log}x_n\cdot\kappa_n^k)\doteq\mu_n^k,\]
   by the exactness of the sequence. Moreover, if $p\mid k$, then $\kappa_n^k$ lies in the image of $(\rho_n)_*$, and hence $(\partial_n)_*(\kappa_n^k)=0$. If $p\nmid k$, then $(\partial_n)_*(\kappa_n^k)$ and $(\partial_n)_*(\lambda_n d\mathrm{log}x_n\cdot\kappa_n^{k-1})\doteq\lambda_n\mu_n^{k-1}$ linearly independent. Since $V(n)_{2kp^n-1}\THH(\BP{n-1})$ is generated by $\epsilon_n\mu_n^{k-1}$ and $\lambda_n\mu_n^{k-1}$, we get the desired formula. It follows that
   \[\mathrm{im}((\partial_n)_*)=D_n\{\mu_n^a \ | \ 0\leq a<p\}\oplus D_n\{\epsilon_n\mu_n^a \ | \ 0\leq a<p-1\},\]
   and $(\tau_n)_*$ vanishes on this submodule. The only remaining $D_n$-module generator is $\epsilon_n\mu_n^{p-1}$, and exactness implies
   \[(\tau_n)_*(\epsilon_n\mu_n^{p-1})\doteq\lambda_{n+1}.\]
   The remaining transfer formulas follows from the linearity.
\end{proof}

\subsection{The height-two case} 
We now consider the cofiber sequence \eqref{seq2}. Again, we denote $\overline{\mathrm{res}}_2$ by $\bar{\partial}_2$ and the fiber map of $\bar{\rho}_2$ by $\bar{\tau}_2$. Moreover, set $\overline{D}_2=E(\lambda_1,\epsilon_1)\otimes P(\mu_3)\otimes P_p([dx_1]).$ Then the calculations in Section \ref{sec5} implies that there are isomorphisms of free $\overline{D}_2$-modules
\begin{equation}\label{eq:free}
	\begin{split}
		& V(2)_*\THH(\BP{2}/v_1)\cong \overline{D}_2\{1, \lambda_3\},\\
		& V(2)_*\THH(\BP{2}/v_1, \langle v_2\rangle)\cong \overline{D}_2\{[dx_2]^a, d\log x_2 \cdot [dx_2]^a \ |\ 0\leq a<p\},\\
		& V(2)_*\THH(\mathbb{Z}_{(p)})\cong \overline{D}_2\{\mu_1^{ap}, \epsilon_2\cdot\mu_1^{ap} \ | \ 0\leq a<p\},
	\end{split}
\end{equation}

	where $[dx_1]$ and $\mu_3$ correspond to $\mu_1$ and $\mu_1^{p^2}$ in $V(2)_*\THH(\mathbb{Z}_{(p)})$, respectively.  

\begin{prop}\label{prop:seq2}
	Let $p\geq7$. With the generators chosen above and $d\in\overline{D}_2$, the localization map satisfies
	\[(\bar{\rho}_2)_*(d)=d,\qquad (\bar{\rho}_2)_*(d\cdot \lambda_3)=0.\]
	The residue map satisfies
	\begin{alignat*}{2}
	(\bar{\partial}_2)_*(d) &=0,\\
	(\bar{\partial}_2)_*(d\cdot [dx_2]^a) & \doteq d\cdot \epsilon_2\cdot \ \mu_1^{p(a-1)}, &\qquad& 1\leq a<p,\\
	(\bar{\partial}_2)_*(d\cdot d\log x_2\cdot [dx_2]^a) &\doteq d\cdot \mu_1^{pa}, && 0\leq a<p.
	\end{alignat*}
	The transfer map satisfies
	\begin{alignat*}{2}
	(\bar{\tau}_2)_*(d\cdot \mu_1^{pa}) &=0, && 0\leq a<p,\\
	(\bar{\tau}_2)_*(d\cdot\epsilon_2\cdot\mu_1^{pa})&=0, && 0\leq a<p-1,\\
	(\bar{\tau}_2)_*(d\cdot\epsilon_2\cdot\mu_1^{p(p-1)}) &\doteq d\cdot\lambda_3.
	\end{alignat*}
\end{prop}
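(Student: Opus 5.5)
I will follow the proof of Proposition \ref{prop:seq1} closely, replacing the single-generator data by the free $\overline{D}_2$-module descriptions in \eqref{eq:free}. The first step is to establish that the long exact sequence in $V(2)$-homotopy induced by \eqref{seq2} is a sequence of $\overline{D}_2$-modules, or at least linear over the relevant classes. I would obtain this from two facts. First, \eqref{seq2} arises by tensoring \eqref{seq1} for $n=2$ with $\THH(\BP{2}/v_1)$ over $\THH(\BP{2})$. Second, Lemma \ref{lem:mod3} and its analogues supply the module structures over $E^r_{*,*}(\BP{1})$ and over $V(2)_*\THH(\BP{2})$. The classes $\epsilon_1$ and $[dx_1]$ need separate treatment, since they are not in the image of $V(2)_*\THH(\BP{2})$. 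For these I would use the splitting \eqref{eq:split} together with the fact that $[dx_1]$ arises from the base change along $\Ss[x_1]$, which is common to all three terms. With that in hand, it suffices to determine each map on the listed $\overline{D}_2$-module generators.

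For the localization map, $(\bar{\rho}_2)_*$ factors through the edge homomorphism of the Tor spectral sequence used in Section \ref{sec5}. That spectral sequence is the one computing $V(2)_*\THH(\BP{2}/v_1,\langle v_2\rangle)$ from $V(2)_*\THH(\BP{2}/v_1)$ tensored with $E(d\log x_2)\otimes\Gamma([dx_2])$. In the proof of the final theorem of Section \ref{sec5}, $\lambda_3$ is the target of $d^p(\gamma_p([dx_2]))$, so it dies, giving $(\bar{\rho}_2)_*(d\lambda_3)=0$. The filtration-zero classes of $\overline{D}_2$ survive and are not hit, so $(\bar{\rho}_2)_*(d)=d$. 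This pins down $\ker(\bar\rho_2)_*=\overline{D}_2\{\lambda_3\}$ and $\mathrm{im}(\bar\rho_2)_*=\overline{D}_2\{1\}$.

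For the residue map, exactness gives $(\bar{\partial}_2)_*(d)=0$ at once. For the classes $d\log x_2\cdot[dx_2]^a$, I would use the comparison with Proposition \ref{prop:seq1} along the map of cofiber sequences induced by $\BP{2}\to\BP{2}/v_1$. There $(\partial_2)_*(d\log x_2\cdot\kappa_2^a)\doteq\mu_2^a$. The map $V(2)_*\THH(\BP{1})\to V(2)_*\THH(\Zz_{(p)})$ sends $\mu_2\mapsto\mu_1^p$ and $\epsilon_2\mapsto\epsilon_2$, which I would read off from the identification in the Section \ref{sec5} spectral sequence map $E^r(\BP{1})\to E^r(0)$. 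Naturality then gives $(\bar\partial_2)_*(d\log x_2[dx_2]^a)\doteq\mu_1^{pa}$.

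Similarly, $(\partial_2)_*(\kappa_2^a)\doteq\epsilon_2\mu_2^{a-1}$ modulo $\lambda_2\mu_2^{a-1}$. Since $\lambda_2\mapsto0$, this yields $(\bar\partial_2)_*([dx_2]^a)\doteq\epsilon_2\mu_1^{p(a-1)}$ exactly. Linearity then extends these values to multiples by $d$. Exactness at the last spot follows by counting: the image of $(\bar\partial_2)_*$ is $\overline{D}_2\{\mu_1^{pa}\}\oplus\overline{D}_2\{\epsilon_2\mu_1^{pa}: a<p-1\}$, so $(\bar\tau_2)_*$ vanishes there. The remaining generator $\epsilon_2\mu_1^{p(p-1)}$ must map onto $\ker(\bar\rho_2)_*=\overline{D}_2\{\lambda_3\}$, and a degree check gives $|\epsilon_2\mu_1^{p(p-1)}|=2p^2-1+2p^2(p-1)=2p^3-1=|\lambda_3|$. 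Hence $(\bar\tau_2)_*(\epsilon_2\mu_1^{p(p-1)})\doteq\lambda_3$, and linearity finishes the transfer formulas.

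The main obstacle is the linearity over all of $\overline{D}_2$, specifically over $\epsilon_1$ and $[dx_1]$, together with the comparison claims. The comparison claims are that the images of $\mu_2$ and $\epsilon_2$ are as stated, and that the naturality square with Proposition \ref{prop:seq1} commutes compatibly with the chosen generators. To settle the linearity, I would first try a dimension count degreewise, using that all three modules are free of the stated ranks and that the sequence is exact. If linearity over $\epsilon_1$ fails to follow formally, I would instead handle the two summands of \eqref{eq:split} separately, as in Section \ref{sec5}, and apply the same argument to each summand.
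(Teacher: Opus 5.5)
Your outline matches the paper's proof wherever only classes coming from $V(2)_*\THH(\BP{2})$ are involved:
\begin{itemize}
\item $(\bar\rho_2)_*$ is read off from the edge homomorphism;
\item $(\bar\partial_2)_*(d)=0$ follows from $\mathrm{im}(\bar\rho_2)_*=\overline{D}_2$;
\item the residue formulas come by naturality from Proposition~\ref{prop:seq1}, using $\lambda_2\mapsto 0$. The paper quotes \cite[Proposition 2.2.2]{AHW24} for this; your derivation via the Section~\ref{sec5} comparison also works, since $\lambda_2$ already dies in $V(2)_*\THH(\BP{2}/v_1)$.
\end{itemize}

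The genuine gap is the passage to arbitrary $d\in\overline{D}_2$. Naturality along $\BP{2}\to\BP{2}/v_1$ only reaches $d$ in the image of $V(2)_*\THH(\BP{2})$, namely $E(\lambda_1)\otimes P(\mu_3)$. Because $\BP{2}/v_1$ is only $\Ee_1$, $V(2)_*\THH(\BP{2}/v_1)$ has no product. So there is no evident action of $\epsilon_1$ or $[dx_1]$ for your first step to be about. The expressions $d\cdot[dx_2]^a$ are names of basis elements read off from spectral sequences, and the remark after the proposition says these maps need not be $\overline{D}_2$-linear.

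A dimension count cannot substitute for this. Take $1\le a<p$ and degree $2p-1+2ap^2$. There $(\bar\partial_2)_*$ is an isomorphism between $3$-dimensional groups:
\begin{itemize}
\item the source is spanned by $\epsilon_1[dx_2]^a$, $\lambda_1[dx_2]^a$ and $\lambda_1\epsilon_1[dx_1]^{p-1}d\log x_2[dx_2]^{a-1}$;
\item the target is spanned by $\epsilon_1\epsilon_2\mu_1^{p(a-1)}$, $\lambda_1\epsilon_2\mu_1^{p(a-1)}$ and $\lambda_1\epsilon_1[dx_1]^{p-1}\mu_1^{p(a-1)}$.
\end{itemize}
Exactness only gives invertibility, while the proposition asserts the matrix is diagonal. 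Naturality controls only the middle row. The paper's own proof also asserts these cases ``by naturality'' without closing this point.

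The same problem affects your derivation of the nonzero transfer. In degree $2p^3-1$, the image of $(\bar\partial_2)_*$ is spanned by the residues of $\lambda_1[dx_1]^{p-1}d\log x_2[dx_2]^{p-1}$ and $\epsilon_1[dx_1]^{p-1}d\log x_2[dx_2]^{p-1}$. These are exactly the uncontrolled cases, so counting does not show $\epsilon_2\mu_1^{p(p-1)}\notin\mathrm{im}(\bar\partial_2)_*$. Argue as the paper does instead: $(\tau_2)_*(\epsilon_2\mu_2^{p-1})\doteq\lambda_3$ maps under the comparison to $(\bar\tau_2)_*(\epsilon_2\mu_1^{p(p-1)})\doteq\lambda_3$, with no input about other generators.

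For the $\epsilon_1$- and $[dx_1]$-multiples a new argument is needed. Your remark that $[dx_1]$ comes from the base change along $\Ss[x_1]$ is the right starting point. One route is as follows.
\begin{itemize}
\item Realize the $V(2)$-version of \eqref{seq2} as the base change of that of \eqref{seq1} along $\Ff\otimes\THH(\Ss[x_1])\to\Ff$.
\item The three maps then become maps of Tor spectral sequences. On $E^2$-pages these should be the maps of Proposition~\ref{prop:seq1} tensored with the identity of $E([x_1])\otimes\Gamma([dx_1])$.
\item This gives the formulas only modulo lower filtration. The leftover ambiguity must then be removed, e.g.\ by comparing with the $x_2$-filtration of Section~\ref{sec5} and using the freedom in choosing the named source elements.
\end{itemize}
For $\epsilon_1$, an alternative is to check that the splitting \eqref{eq:split} is compatible with the whole sequence \eqref{seq2}.
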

\begin{proof}
	The cofiber sequence \eqref{seq2} is obtained from the cofiber sequence \eqref{seq1} for $n=2$, by tensoring with $\THH(\BP{2}/v_1)$ over $\THH(\BP{2} $. Since $\BP{2}$ is an $\Ee_3$-ring spectrum, $\THH(\Ss[x_1])$-action and $\THH(\Ss[x_2])$-action on $\THH(\BP{2})$ are compatible with each other. Hence there is a commutative diagram
	\[\xymatrix{
	\THH(\BP{1}) \ar[r]^-{\tau_2} \ar[d] & \THH(\BP{2}) \ar[r]^-{\rho_2} \ar[d]
	& \THH(\BP{2},\langle v_2\rangle) \ar[d] &\\
	\THH(\Zz_{(p)}) \ar[r]^-{\bar{\tau}_2} & \THH(\BP{2}/v_1) \ar[r]^-{\bar{\rho}_2} & \THH(\BP{2}/v_1,\langle v_2\rangle).
	}\]
	Again, the map $(\bar{\rho}_2)_*$ is determined by the canonical map from ordinary THH to log THH. In particular, we have
	\[\mathrm{ker}(\bar{\rho}_2)_*=\overline{D}_2\{\lambda_3\},\qquad \mathrm{im}(\bar{\rho}_2)_*=\overline{D}_2.\]
	Moreover, since $\lambda_2\in V(2)_{2p^2-1}\THH(\BP{1})$ maps to zero in $ V(2)_{2p^2-1}\THH(\Zz_{(p)})$ by \cite[Proposition 2.2.2]{AHW24}, the naturality implies that 
	\[(\bar{\partial}_2)_*(d\cdot [dx_2]^a)\doteq d\cdot \epsilon_2\cdot \ \mu_1^{p(a-1)}, \qquad (\bar{\partial}_2)_*(d\cdot d\log x_2\cdot [dx_2]^a)\doteq d\cdot \mu_1^{pa},\]
	for $1\leq a<p$ and $0\leq a<p$, respectively. It follows from Equation \eqref{eq:free} that $(\bar{\partial}_2)_*(d)=0$, since $\mathrm{im}(\bar{\rho}_2)_*=\overline{D}_2$. Naturality also implies that 
	\[(\bar{\tau}_2)_*(d\cdot\epsilon_2\mu_1^{p(p-1)}) \doteq d\cdot\lambda_3,\] 
	by Proposition \ref{prop:seq1}. Finally, exactness gives the asserted zero maps.
\end{proof}
\begin{rem}
	Note that since $\BP{2}/v_1$ is only an $\Ee_1$-ring, maps in Proposition \ref{prop:seq2} are not necessarily $\overline{D}_2$-linear.
\end{rem}

\section*{Acknowledgements}
The author is grateful to his advisor Xiangjun Wang for his guidance and support.

\bibliographystyle{plain}
\bibliography{ref}
	
\end{document}